\newcommand{\norm}[1]{\left\lVert#1\right\rVert}
\theoremstyle{plain}
\newtheorem{theorem}{Theorem}[section]
\newtheorem{Corollary}[theorem]{Corollary}
\newtheorem{lemma}[theorem]{Lemma}
\newtheorem{Definition}[theorem]{Definition}
\newtheorem{proposition}[theorem]{Proposition}
\theoremstyle{remark}
\newtheorem{Remark}[theorem]{Remark}
\newcommand{\Hmm}[1]{\leavevmode{\marginpar{\tiny%
$\hbox to 0mm{\hspace*{-0.5mm}$\leftarrow$\hss}%
\vcenter{\vrule depth 0.1mm height 0.1mm width \the\marginparwidth}%
\hbox to 0mm{\hss$\rightarrow$\hspace*{-0.5mm}}$\\\relax\raggedright #1}}}
\begin{document}

\title[Dynamics of the Hirota-Satsuma System]{SMOOTHING AND GLOBAL ATTRACTORS FOR THE HIROTA-SATSUMA SYSTEM ON THE TORUS}

\author[Ba\c{s}ako\u{g}lu, G\"{u}rel] {E. Ba\c{s}ako\u{g}lu, T. B. G\"{u}rel}
\thanks{The authors were partially supported by the T\"UB\.ITAK grant 118F152 and the Bo\u gazi\c ci University Research Fund grant BAP-14081.} 
\address{Department of Mathematics,Bo\u gazi\c ci University, 
Bebek 34342, Istanbul, Turkey}
\email{engin.basakoglu@boun.edu.tr}
\address{Department of Mathematics,
Bo\u gazi\c ci University, 
Bebek 34342, Istanbul, Turkey}
\email{bgurel@boun.edu.tr}
\subjclass[2010]{35Q53, 35B41}
\keywords{Hirota-Satsuma system, Smoothing, Global attractors}

\begin{abstract}
We consider the Hirota-Satsuma system, a coupled KdV-type system, with periodic boundary conditions. The first part of the paper concerns with the smoothing estimates for the system. More precisely, it is shown that, for initial data in a Sobolev space, the difference of the nonlinear and linear evolutions lies in a smoother space. The smoothing gain we obtain depends very much on the arithmetic nature of the coupling parameter $a$ which determines the structure of the resonant sets in the estimates. In the second part, we address the forced and damped Hirota-Satsuma system and obtain counterpart smoothing estimates. As a consequence of these estimates, we prove the existence and smoothness of a global attractor in the energy space.

\end{abstract}

\maketitle
\section{Introduction}
The Hirota-Satsuma system is a system of coupled KdV equations, introduced by Hirota and Satsuma in $1981$, \cite{hirota}. In this paper, we consider the Hirota-Satsuma system with periodic boundary conditions 
\begin{align}\label{hir}
\begin{cases}
u_t+  au_{xxx}+3a (u^2)_x+\beta(v^2)_x =0, \hspace{0.5cm}x\in \mathbb{T}\\ v_t+v_{xxx}+3uv_x=0, \\ (u,v)\rvert_{t = 0}=(u_0,v_0)\in \dot{H}^s(\mathbb{T})\times H^s(\mathbb{T}) 
\end{cases}
\end{align}
where $a\in(\frac{1}{4},1)$, $\beta \in \mathbb{R}$ and $\dot{H}^s(\mathbb{T})=\{f\in H^s(\mathbb{T}): \int_0^{2\pi}f(x)\,\text{d}x=0\}$. Here the choice of the parameter $a$ is related to the resonance equations coming from after applying the normal form transformation to the system \eqref{hir}. The system \eqref{hir} is a generalization of the KdV equation (when $v=0$) and describes the interplay of two long waves evolving with different dispersion relations. Note that the mean zero condition on $v$ cannot be applicable since the system \eqref{hir} does not preserve the mean value of $v$, and that the momentum conservation holds for $u$ only:
\begin{align*}
 \int u(x,t)\, \text{d}x= \int u_0(x)\, \text{d}x.
\end{align*}
The system \eqref{hir} also satisfies the following conservation laws \cite{hirota}:
\begin{equation}\label{conservationhir}
\begin{split}
&E_1(u,v)=\int u^2-\frac{2\beta}{3}v^2\,\text{d}x,\hspace{0.5cm} \\&E_2(u,v)=\int (1-a)u_x^2-2\beta v_x^2-2(1-a)u^3+2\beta uv^2\,\text{d}x.
\end{split}
\end{equation}
As a consequence of these conserved energies, it turns out that the energy space for the system is $H^1\times H^1$. No other conserved quantities seem to exist for \eqref{hir} that holds for any $a$ and $\beta$; nevertheless for $a=-\frac{1}{2}$, the system is known to be completely integrable, \cite{hirota, ramani}. Before discussing the literature of the coupled KdV type systems, it makes sense to review more recent well-posedness results of the KdV equation
\begin{align}\label{kdv}
\begin{cases}
u_t+u_{xxx}+uu_x=0 \\ u(x,0)=u_0(x) \in H^s(K)\,\, \text{for}\,\,K=\mathbb{R}\,\,\text{or}\,\,\mathbb{T}.
\end{cases}   
\end{align}
 Introducing Fourier restriction spaces Bourgain extended the previous local well-posedness results of the KdV equation to the $L^2$ level on $\mathbb{R}$ and $\mathbb{T}$, \cite{Bourgain2}. Later in \cite{kenig}, Kenig, Ponce and Vega proved the local well-posedness in $H^s(\mathbb{R})$ for $s>-\frac{3}{4}$ and in $H^s(\mathbb{T})$ for $s>-\frac{1}{2}$. The local well-posedness in $H^{-\frac{3}{4}}(\mathbb{R})$ was established by Christ-Colliander-Tao in \cite{christ}. In \cite{colliander}, Colliander-Keel-Staffilani-Takaoka-Tao obtained the local and global well-posedness in $H^{-\frac{1}{2}}(\mathbb{T})$. The global well-posedness for $\mathbb{R}$ at the endpoint $s=-\frac{3}{4}$ was proved by Guo \cite{guo}. By using the integrability properties of the KdV equation, Kappeler and Topalov established the local and global well-posedness in $H^{-1}(\mathbb{T})$, \cite{kappeler}. Later, in \cite{molinet1,molinet2}, Molinet showed that the KdV equation is ill-posed in $H^s(K)$ for $K=\mathbb{R}, \mathbb{T}$, $s<-1$. Finally, the global well-posedness in $H^{-1}(\mathbb{R})$ has recently been obtained by Killip and Visan \cite{killip}, and the study of the well-posedness of \eqref{kdv} has been brought to a satisfactory conclusion.
 
The well-posedness theory of the Hirota-Satsuma system began with the work of He \cite{he}, with the assumptions $0<a<1$ and $\beta<0$ on the coefficients, He obtained the existence and uniqueness of the global solutions in $L^{\infty}([0,T]; H^{3}(K)\times H^{3}(K))$ for $K=\mathbb{T},\mathbb{R}$. In the real case, Feng \cite{feng} improved this result to the range $s\geq 1$ by considering slightly general coupled KdV-KdV system. In particular, it was shown that for $a\neq 1$ and $\beta<0$, the system is locally well-posed in $H^{s}(\mathbb{R})\times H^{s}(\mathbb{R})$ for $s\geq1$. Also with the additional assumption $0<a<1$, the system was shown to be globally well-posed in $H^{s}(\mathbb{R})\times H^{s}(\mathbb{R})$ for $s\geq1$. Later, Alvarez and Carvajal pushed the local result down to $s>\frac{3}{4}$ for the real case. They also showed that the system with $a\neq 0$ is ill-posed in $H^{s}(\mathbb{R})\times H^{s'}(\mathbb{R})$ for $s\in[-1,-3/4)$ and $s'\in\mathbb{R}$. Recently, Yang and Zhang \cite{zhang1} have studied the well-posedness of the Cauchy problem for a class of coupled KdV-KdV (cKdV) systems in $H^s(\mathbb{R})\times H^s(\mathbb{R})$, those including Gear-Grimshaw system, Hirota-Satsuma system, the Majdo-Biello system etc. In particular, regarding the Hirota-Satsuma system, they have given critical index $s^*\in\{-\frac{3}{4},0,\frac{3}{4}\}$ depending on the numeric value of the coefficient $a$ for which the Hirota-Satsuma Cauchy problem is locally well-posed in $H^s(\mathbb{R})\times H^s(\mathbb{R})$ when $s>s^*$. As regards to the periodic case, Angulo \cite{angulo} showed that for $a\neq 0,\,1$ the Hirota-Satsuma system is locally well-posed in $\dot{H}^s(\mathbb{T})\times \dot{H}^s(\mathbb{T})$ for $s\geq 1$. With the additional assumptions $\beta<0$ and $a<1$, Angulo further obtained the global well-posedness in the same space within the same Sobolev index range that follows from the conservation of the energy. Finally, in \cite{zhang}, Yang and Zhang have recently obtained the well-posedness results of the cKdV systems on the periodic domain $\mathbb{T}$ as a follow up of their corresponding work \cite{zhang1}. Here we shall merely summarize the well-posedness results of \cite{zhang} concerning the Cauchy problem \eqref{hir}. The results depend on the arithmetic properties of the coefficients $a$ and $\beta$. When $a=1$ and $\beta=0$, the system \eqref{hir} is locally well-posed in $\dot{H}^s(\mathbb{T})\times H^s(\mathbb{T})$ for $s\geq \frac{1}{2}$. In the case $a\in(-\infty,\frac{1}{4})\setminus \{0\}$, as the resonace interactions are relatively easier to control, the local well-posedness is established in $\dot{H}^s(\mathbb{T})\times H^s(\mathbb{T})$ for $s\geq -\frac{1}{4}$; whereas in the remaining regime, $a\in[\frac{1}{4},\infty)\setminus \{1\}$, the resonances raise special difficulties in which case one needs to know how well a given number can be approximated by rational numbers (Diophantine approximation). The idea of controlling resonances via the Diophantine approximation was initially implemented by Oh \cite{oh} to the Majdo-Biello system on the torus to establish the well-posedness. Using this approach, Yang and Zhang proved the local well-posedness in $\dot{H}^s(\mathbb{T})\times H^s(\mathbb{T})$ for $s\geq \min\{1,s_a+\}$ with the mean zero assumption on the initial data $u_0$. Here $s_a$ is defined by means of a number theoretic parameter based on the arithmetic properties of $a$. On account of the conserved energies \eqref{conservationhir} for the system \eqref{hir}, when $\frac{1}{4}\leq a<1$ and $\beta<0$, the local well-posedness can be upgraded to global well-posedness for $s\geq1$. Also when $a\in(-\infty,\frac{1}{4})\setminus \{0\}$ and $\beta<0$, the direct application of the conservation of $E_1(u,v)$ and the  corresponding local result yield the global well-posedness for $s\geq 0$.

In the first part of the paper, we study the smoothing property of the Hirota-Satsuma system, in other words, we prove that the difference between the nonlinear evolution and the linear evolution lies in a more regular space than the inital data under consideration. The proof is based on the method of normal forms through differentiation by parts introduced by Babin-Ilyin-Titi \cite{titi} and the Bourgain's Fourier restricted norm method, \cite{Bourgain2}. The idea of using combination of these methods in proving nonlinear smoothing effect on a bounded domain was first used by Erdo\u{g}an and Tzirakis for the KdV equation \cite{erdogan} and the Zakharov system \cite{erdoganzak}. The result for the KdV equation is somewhat surprising since the KdV equation is known to have no smoothing estimate on the real line. Recently, Compaan \cite{compaanmaj} studied the smoothing properties of the Majdo-Biello system on the torus and proved the existence of global attractors in the sense of arguments in \cite{erdogan,erdoganzak}. Here we continue the program initiated by these papers. In our proof, via the normal form reduction, the derivatives in the nonlinearities can be eliminated, and in return for this, the orders of the nonlinearities increase (from quadratic to cubic) and many resonant terms come into play based on the arithmetic properties of the coupling parameter $a$. In order to control the new trilinear nonlinearities we rely on the $X^{s,b}$ estimates. 

In the second part, we concentrate on the description of long time dynamics of the forced and weakly damped system:
\begin{align}\label{disshirfirst}
\begin{cases}
u_t+  au_{xxx}+\gamma_1u+3a (u^2)_x+\beta(v^2)_x =f \\ v_t+v_{xxx}+\gamma_2v+3uv_x=g \\ (u,v)\rvert_{t = 0}=(u_0,v_0)\in \dot{H}^1(\mathbb{T})\times H^1(\mathbb{T}) 
\end{cases}
\end{align}
where the damping coefficients $\gamma_1,\gamma_2$ are positive, $\beta<0$, $f,\,g$ are time independent, and $f\in H^1$ is mean zero, $g\in H^1$. To simplify the calculations, we will take $\gamma_1=\gamma_2$; the general case follows from minor modifications in the computations. The smoothing estimates obtained in the first part will play an essential role in demonstrating that the system \eqref{disshirfirst} possesses a global attractor, also the result here answers the regularity of the attractor above the energy level. Using the method of \cite{compaanmaj,erdoganlong,erdoganzak}, roughly, the idea is to write the solution in terms of linear and nonlinear parts and then to implement the smoothing estimates to the nonlinear part.
\subsection{Notation}
We write $A\lesssim B$ to denote that there is some constant $C>0$ such that $A\leq CB$, and $A\gtrsim B$ is defined accordingly. Moreover, $A\approx B$ means that $A\lesssim B$ and $B\lesssim A$. We write $A\ll B$ to denote that $A\leq \frac{1}{C}B$ for sufficiently large constant $C$. We also write $A\simeq B$ to designate that there is some small $\epsilon>0$ such that $|A-B|\leq \epsilon$. In order to simplify the calculations, the notation  $\mathcal{O}(\delta)$ is to be used in place of a constant $C\delta$ where $C$ might be dependent on the coupling parameter $a$ yet not on the variables involved in the calculations. We write $A+$ to denote $A+\epsilon$ for arbitrary $\epsilon>0$; similarly $A-$ is used to denote $A-\epsilon$. 

The Fourier sequence of a $2\pi$ periodic $L^2$ function $u$ is defined as
\begin{align*}
    u_k=\frac{1}{2\pi}\int_0^{2\pi}u(x)e^{-ikx}\,\text{d}x, \hspace{0.5cm}k\in\mathbb{Z}.
\end{align*}
By introducing the notation $\langle \cdot \rangle=\sqrt{1+|\cdot|^2}$, we define the Sobolev norms of $u\in H^s(\mathbb{T})$ as follows
\begin{align*}
    \norm{u}_{H^s}=\norm{\langle k \rangle^s \widehat{u}(k)}_{\ell^2_k}.
\end{align*}
Note that for a mean zero function $u\in\dot{H}^s$, $\norm{u}_{H^s}\approx \norm{|k|^s\widehat{u}(k)}_{\ell_k^2}$. The Fourier restriction spaces $X^{s,b}_a$ and $X^{s,b}_1$ associated to the system \eqref{hir} are defined via the norms
\begin{align*}
    &\norm{u}_{X^{s,b}_a}=\norm{\langle k\rangle^s\langle \tau-ak^3\rangle^b\widehat{u}(k,\tau)}_{\ell_k^2L^2_{\tau}}\\&\norm{v}_{X^{s,b}_1}=\norm{\langle k\rangle^s\langle \tau-k^3\rangle^b\widehat{v}(k,\tau)}_{\ell_k^2L^2_{\tau}}.
\end{align*}
The restricted norms are also defined by
\begin{align*}
    &\norm{u}_{X^{s,b}_{a,\delta}}=\inf_{\tilde{u}=u\,\text{on}\,[-\delta,\delta]}\norm{\tilde{u}}_{X^{s,b}_{a}}\\&\norm{v}_{X^{s,b}_{1,\delta}}=\inf_{\tilde{v}=v\,\text{on}\,[-\delta,\delta]}\norm{\tilde{v}}_{X^{s,b}_{}}.
\end{align*}

\section{Statement of Results}\label{prelimhir}
\subsection{Preliminaries}
In order to state well-posedness results and hence to study the smoothing properties of the system \eqref{hir} on the torus, we require special parameters related to given numbers, called irrationality exponent, which might be used to learn how close given numbers can be approximated by rational numbers. Irrationality exponent of a real number arises in the study of Diophantine approximation theory. In our discussion, we utilize these quantities in controlling resonances.     
\begin{Definition}[\cite{becher}]
A number $r\in \mathbb{R}$ is said to be approximable with power $\mu$, if the inequality \begin{align*}
    0<\Big|r-\frac{n}{k}\Big|<\frac{1}{|k|^{\mu}}
\end{align*}
holds for infinitely many $(n,k)\in \mathbb{Z}\times\mathbb{Z}^*$, and \begin{align}\label{irrexp}
    \mu(r)=\sup\{\mu\in\mathbb{R}: r\, \text{is approximable with power}\, \mu \}
\end{align}
is called the irrationality exponent of $r$.
\end{Definition}
We now review some properties of the irrationality exponent. Irrationality exponent maps the set of real numbers onto the set $\{1\}\cup [2,\infty]$, see \cite{jarnik, jarnik1}. In particular, for $r\in\mathbb{Q}$, $\mu(r)=1$ whereas for irrational number $r$ we have $\mu(r)\geq 2$. By the Thue-Siegel-Roth theorem \cite{roth,siegel,thue}, for an irrational algebraic number $r$, $\mu(r)=2$, also by the Khintchine theorem \cite{khintchine}, for almost every $r\in \mathbb{R}$, $\mu(r)=2$. The local theory for the Cauchy problem \eqref{hir} is based on the critical index $s_r$ for $r\geq\frac{1}{4}$ defined by
\begin{align*}
s_r= \begin{cases}
1 \hspace{1.5cm}\text{if}\,\,\mu(\rho_r)=1\,\,\text{or}\,\,\mu(\rho_r)\geq 3\\ \frac{\mu(\rho_r)-1}{2}\hspace{0.55cm}\text{if}\,\,2\leq \mu(\rho_r)<3 
\end{cases}   
\end{align*}
where $\rho_r=\sqrt{12r-3}$. In connection with the well-posedness of the system \eqref{hir}, we state some of the results of \cite{zhang}. 
\begin{theorem}[\cite{zhang}]
For $a\in[\frac{1}{4},\infty)\setminus\{1\}$ and $s\geq \min\{1,s_a+\}$, the Hirota-Satsuma system \eqref{hir} is locally well-posed in $\dot{H}^s\times H^s$. In particular, given any initial data $(u_0,v_0)\in\dot{H}^s\times H^s $ there exists $\delta\approx (\norm{u_0}_{H^s}+\norm{v_0}_{H^s})^{-\frac{3}{2}}$ and a unique solution 
\begin{align*}
(u,v)\in C([-\delta,\delta];H^s\times H^s)\end{align*} satisfying $\norm{u}_{X^{s,1/2}_{a,\delta}}+\norm{v}_{X^{s,1/2}_{1,\delta}}\lesssim \norm{u_0}_{H^s}+\norm{v_0}_{H^s}.$
\end{theorem}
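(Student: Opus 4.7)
The plan is to execute a Bourgain Fourier restriction space contraction adapted to the two differing dispersion relations. Multiplying by a smooth time cutoff $\eta(t)$ supported in $[-1,1]$ with $\eta\equiv 1$ on $[-1/2,1/2]$, I would write the system in Duhamel form and look for a fixed point of the resulting map in a product ball of $X^{s,1/2}_{a,\delta}\times X^{s,1/2}_{1,\delta}$. Standard linear estimates give $\|\eta(t/\delta)e^{-at\partial_x^3}u_0\|_{X^{s,1/2}_a}\lesssim\|u_0\|_{H^s}$ and the analog for $v$, while the Duhamel integral yields a gain $\delta^{0+}$ whenever one trades $X^{s,1/2}$ on the left for $X^{s,-1/2+}$ on the right. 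The whole problem therefore reduces to three bilinear estimates:
\begin{align*}
\|(u^2)_x\|_{X^{s,-1/2+}_a} &\lesssim \|u\|_{X^{s,1/2}_a}^2, \\
\|(v^2)_x\|_{X^{s,-1/2+}_a} &\lesssim \|v\|_{X^{s,1/2}_1}^2, \\
\|uv_x\|_{X^{s,-1/2+}_1} &\lesssim \|u\|_{X^{s,1/2}_a}\|v\|_{X^{s,1/2}_1}.
\end{align*}
The first is the familiar periodic KdV bilinear estimate of Kenig--Ponce--Vega and Bourgain, valid for $s\geq -1/2$ on mean-zero functions; the second and third carry the actual content of the theorem.

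On the Fourier side, after Cauchy--Schwarz in the modulation variables, the two mixed-dispersion estimates are controlled by a resonance function. For the $vv\to u$ interaction at output frequency $k=k_1+k_2$,
\begin{align*}
\Phi(k_1,k_2) = ak^3 - k_1^3 - k_2^3 = (k_1+k_2)\bigl[(a-1)k_1^2+(2a+1)k_1k_2+(a-1)k_2^2\bigr],
\end{align*}
and a parallel computation gives the $uv\to v$ phase. The quadratic factor has roots $\xi_\pm=(-(2a+1)\pm\rho_a)/(2(a-1))$ with $\rho_a=\sqrt{12a-3}$, so the \emph{same} number-theoretic quantity $\rho_a$ controls both bilinear interactions. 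Away from the lines $k_1/k_2\in\{\xi_+,\xi_-\}$ one has $|\Phi|\gtrsim|k|\cdot\max(|k_1|,|k_2|)^2$, enough to close a KdV-level estimate at any $s\geq -1/4$; the entire obstruction is how closely $k_1/k_2$ can approximate $\xi_\pm$.

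When $\mu(\rho_a)=1$ the ratios $\xi_\pm$ are rational, only finitely many exact resonances survive, and the argument closes easily. When $\rho_a$ is irrational, the definition of the irrationality exponent supplies the Diophantine lower bound $|\xi_\pm - n/k_2|\gtrsim|k_2|^{-\mu(\rho_a)-}$, producing a loss of order $|k_2|^{\mu(\rho_a)-1+}$ inside the near-resonant strip. Tracking this loss through the Cauchy--Schwarz summation yields the Sobolev threshold $s\geq(\mu(\rho_a)-1)/2+$ for $2\leq\mu(\rho_a)<3$; once $\mu(\rho_a)\geq 3$ the loss cannot be absorbed below the scale-invariant level and one retreats to $s\geq 1$, where the bilinear estimate follows trivially from Sobolev embedding. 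This case analysis reproduces the definition of $s_a$.

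With the three bilinear estimates in hand, the contraction closes on the ball of radius $R\approx\|u_0\|_{H^s}+\|v_0\|_{H^s}$ once the $\delta^{0+}$-gain dominates the bilinear loss, producing the lifespan $\delta\approx R^{-3/2}$; uniqueness and continuous dependence follow from the standard difference estimate, and continuity into $C_tH^s$ from the embedding of $X^{s,b}_{a,\delta}$ with $b>1/2$ into $C_tH^s$ applied to the Duhamel representation of the fixed point. The main obstacle is unquestionably the sharp bilinear estimate at the threshold $s_a+$: identifying the algebraic resonance loci, quantifying their near-resonant neighborhoods via $\mu(\rho_a)$, and balancing the gain $\langle\Phi\rangle^{1/2-}$ against the Diophantine loss $|k_2|^{\mu(\rho_a)-1+}$ with no logarithmic slack.
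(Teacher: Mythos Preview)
The paper does not prove this theorem; it is quoted from Yang--Zhang \cite{zhang} as background for the smoothing estimates, so there is no proof in the present paper to compare your proposal against. Your outline is essentially the standard Bourgain contraction scheme that Yang--Zhang carry out, and you have correctly isolated the heart of the matter: the two mixed-dispersion bilinear estimates, the factorisation of the resonance function through the quadratic with discriminant $\rho_a^2=12a-3$, and the Diophantine lower bound $|r_j-n/k|\gtrsim|k|^{-\mu(\rho_a)-}$ that converts the irrationality exponent into a Sobolev threshold.

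One point is stated incorrectly. In the rational case $\mu(\rho_a)=1$ you write that ``only finitely many exact resonances survive, and the argument closes easily.'' The opposite is true: when $r_1,r_2\in\mathbb{Q}$, say $r_1=p/q$, every pair $(k,k_1)=(mq,mp)$ with $m\in\mathbb{Z}^*$ gives $ak^3-k_1^3-k_2^3=0$, so there are infinitely many exact resonances along an arithmetic progression and the modulation provides no gain whatsoever on that set. This is precisely why $s_a=1$ in the rational case---one is forced back to $s\geq 1$ where the bilinear estimate follows from the algebra property of $H^s$, not because the resonant set is small. Your description of the irrational regime $2\leq\mu(\rho_a)<3$ is correct.

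A second, more technical, gap: you run the contraction in $X^{s,1/2}$ but then invoke the embedding $X^{s,b}\hookrightarrow C_tH^s$, which requires $b>1/2$. At $b=1/2$ this embedding fails, and one must either work in $X^{s,1/2+}$ (paying for this with a slightly weaker bilinear estimate) or augment the norm by an $\ell^2_kL^1_\tau$-type component as in the $Y^s$ spaces of Bourgain and Colliander--Keel--Staffilani--Takaoka--Tao. Yang--Zhang handle this, but your sketch does not indicate how.
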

 
\begin{theorem}[\cite{zhang}]
Let $\beta<0$ and $a\in[\frac{1}{4},1)$. Then for $s\geq 1$, the Hirota-Satsuma initial value problem is globally well-posed in $\dot{H}^s\times H^s$.
\end{theorem}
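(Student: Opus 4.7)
The plan is to upgrade the preceding local theorem to a global statement by establishing an a priori bound on $\norm{u(t)}_{H^s}+\norm{v(t)}_{H^s}$: since the local lifespan depends only on the $H^s$ norm of the data, a uniform bound immediately iterates to global existence. Under the hypotheses $\beta<0$ and $a\in[\tfrac{1}{4},1)$, the coefficients $-\tfrac{2\beta}{3}$, $-2\beta$, and $1-a$ are all strictly positive, which makes $E_1$ and the quadratic part of $E_2$ coercive in the right norms.

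The first step is an $L^2$ bound from $E_1$: with $\beta<0$ one has $E_1(u,v)\approx \norm{u}_{L^2}^2+\norm{v}_{L^2}^2$, so conservation yields $\norm{u(t)}_{L^2}^2+\norm{v(t)}_{L^2}^2\lesssim E_1(u_0,v_0)$ on the lifespan. The second step is to use $E_2$ to control the derivatives. The quadratic derivative part of $E_2$ is comparable to $\norm{u_x}_{L^2}^2+\norm{v_x}_{L^2}^2$; the cubic terms must be absorbed. On the torus, Gagliardo-Nirenberg yields
\[
\Big|\int u^3\,dx\Big|\lesssim \norm{u}_{L^2}^{5/2}\norm{u_x}_{L^2}^{1/2},\qquad \Big|\int uv^2\,dx\Big|\lesssim \norm{u}_{L^2}\bigl(\norm{v}_{L^2}^{3/2}\norm{v_x}_{L^2}^{1/2}+\norm{v}_{L^2}^2\bigr),
\]
and Young's inequality (with exponents $4$ and $4/3$) absorbs the $\norm{u_x}_{L^2}^{1/2}$ and $\norm{v_x}_{L^2}^{1/2}$ factors into, respectively, $\tfrac{1-a}{2}\norm{u_x}_{L^2}^2$ and $(-\beta)\norm{v_x}_{L^2}^2$, leaving remainders that are polynomial in $\norm{u}_{L^2}$ and $\norm{v}_{L^2}$. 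Rearranging the $E_2$ identity then produces $\norm{u_x(t)}_{L^2}^2+\norm{v_x(t)}_{L^2}^2\lesssim \mathcal{P}(\norm{u_0}_{H^1},\norm{v_0}_{H^1})$ with $\mathcal{P}$ polynomial, which together with the $L^2$ bound closes the $H^1$ a priori estimate.

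With the $H^1$ bound in hand, the case $s=1$ follows by iterating the preceding local theorem on successive intervals of length $\delta\approx(\norm{u(t_0)}_{H^1}+\norm{v(t_0)}_{H^1})^{-3/2}$, whose length is now bounded below uniformly in $t_0$. For $s>1$, I would rerun the contraction argument at the higher regularity on each short step: the bilinear $X^{s,1/2}$ estimates used in the proof of the local theorem, together with the $H^1$-control just obtained, give a Gronwall-type bound
\[
\norm{u(t)}_{H^s}+\norm{v(t)}_{H^s}\lesssim \bigl(\norm{u_0}_{H^s}+\norm{v_0}_{H^s}\bigr) \exp\bigl(C(\norm{(u_0,v_0)}_{H^1\times H^1}) \, t\bigr),
\]
which prevents $H^s$-blowup on any bounded time interval.

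The main difficulty is the cross term $\int uv^2$ in $E_2$: unlike $u$, the component $v$ has no mean-zero constraint, so Poincar\'e-style estimates of $v$ in $L^4$ or $L^\infty$ do not reduce purely to derivative norms. The Gagliardo-Nirenberg inequality above retains a $\norm{v}_{L^2}^2$ remainder precisely to accommodate this mean component, and it is essential that this remainder is already controlled by $E_1$ before one tries to invert $E_2$; the compatibility of the two conservation laws in this sense is what makes the $H^1$ a priori bound close.
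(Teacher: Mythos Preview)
The paper does not supply its own proof of this theorem; it is quoted from Yang--Zhang \cite{zhang}, and the only in-text justification is the sentence in the introduction that ``on account of the conserved energies \dots\ the local well-posedness can be upgraded to global well-posedness for $s\ge1$.'' Your proposal correctly fills in precisely those details and is the standard argument: coercivity of $E_1$ and of the quadratic part of $E_2$ under the sign conditions $\beta<0$, $a<1$, followed by Gagliardo--Nirenberg to absorb the cubic terms, yields the $H^1$ a~priori bound and hence global existence at $s=1$ by iteration of the local theorem. The paper in fact carries out essentially the same $E_1$/$E_2$ computation, with damping and forcing added, in Lemma~\ref{energyhir}, so your $s=1$ argument matches what the authors do there.

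For $s>1$ your persistence-of-regularity sketch is the natural route, but to make the Gronwall step genuine you need the bilinear $X^{s,1/2}$ estimates in an asymmetric form where one factor sits at regularity $s$ and the other only at $1$, so that the local lifespan can be taken to depend on the $H^1$ norm alone; this is standard for KdV-type nonlinearities but is a point you should state explicitly rather than leave implicit. Note that the Corollary following Theorem~\ref{smoothingresulthir} in the paper obtains polynomial-in-time $H^s$ bounds by a different mechanism (smoothing), but that argument already presupposes global existence in $H^s$, so it does not replace the persistence step you outline.
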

\subsection{Smoothing Estimates for the Hirota-Satsuma System}
Applying normal form transformation to the system \eqref{hir} leads to the resonance equations (for $k_1+k_2=k$)
\begin{align*}
ak^3-k_1^3-k_2^3=-3k(k_1-r_1k)(k_1-r_2k)\,\,\text{and}\,\,k^3-ak_1^3-k_2^3=(1-a)k_1(k_1-\widetilde{r}_1k)(k_1-\widetilde{r}_2k)   
\end{align*}
where 
\begin{align}\label{constantshir}
r_1=\frac{1}{2}+\frac{\sqrt{12a-3}}{6},\,\,\, r_2=\frac{1}{2}-\frac{\sqrt{12a-3}}{6}\,\,\text{and}\,\,\,  \widetilde{r}_1=1/r_1,\,\,\,\widetilde{r}_2=1/r_2.
\end{align}
Note that $r_1$, $r_2$ are the roots of the equation $3x^2-3x+(1-a)$, which immediately implies that $\widetilde{r}_1$, $\widetilde{r}_2$ are the roots of the equation $(1-a)x^2-3x+3$. Therefore, $r_j$ and $\widetilde{r}_j$ are algebraic only when $a\in\mathbb{Q}$. By \eqref{constantshir}, we notice that $r_1, r_2\in\mathbb{R}$ if and only if $a\geq\frac{1}{4}$, and that $\widetilde{r}_1, \widetilde{r}_2\in\mathbb{R}$ if and only if $a\in [\frac{1}{4},1)\cup (1,\infty)$. In this paper, we consider the problem \eqref{hir} for $a\in(\frac{1}{4},1)$, also the problem for interval $(1,\infty)$ can be handled in a similar vein. As performing smoothing estimates, we will be dealing with many expressions such as $ak^3-k_1^3-k_2^3$ and $k^3-ak_1^3-k_2^3$ that appear in the denominators. Controlling such expressions in the case they get close to zero relies on the following lemma:   
\begin{lemma}[\cite{zhang}]
If $r\in \mathbb{R}\setminus \mathbb{Q}$ with $\mu(r)<\infty$, then for any $\epsilon>0$, there exists a constant $K=K(r,\epsilon)>0$ such that the inequality
\begin{align}\label{resonanceineq}
\Big|r-\frac{n}{k}\Big|\geq \frac{K}{|k|^{\mu(r)+\epsilon}}
\end{align} 
holds for any $(n,k)\in \mathbb{Z}\times \mathbb{Z}^*$.
\end{lemma}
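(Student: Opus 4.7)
The lemma is essentially a clean restatement of the definition of the irrationality exponent together with the fact that $r$ is irrational. My plan is to separate the pairs $(n,k) \in \mathbb{Z} \times \mathbb{Z}^*$ into finitely many ``bad'' ones (for which we extract a constant by hand) and all remaining ones (which satisfy the bound with constant $1$).

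First, fix $\epsilon > 0$ and set $\mu' = \mu(r) + \epsilon$. Since $\mu(r) = \sup\{\mu : r \text{ is approximable with power } \mu\}$ and $\mu' > \mu(r)$, the number $r$ is \emph{not} approximable with power $\mu'$. Unwinding the definition, this means that the set
\begin{equation*}
S \;=\; \Big\{(n,k) \in \mathbb{Z} \times \mathbb{Z}^* \;:\; 0 < \Big|r - \tfrac{n}{k}\Big| < \tfrac{1}{|k|^{\mu'}}\Big\}
\end{equation*}
is finite.

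Next, I would split the verification of the inequality \eqref{resonanceineq} into two cases. For any $(n,k) \notin S$, either $|r - n/k| = 0$, which is impossible because $r$ is irrational, or $|r - n/k| \geq 1/|k|^{\mu'}$; in this case the inequality holds with constant $K = 1$. For the finitely many exceptional pairs $(n,k) \in S$, each quantity $|r - n/k| \cdot |k|^{\mu'}$ is strictly positive (again by irrationality of $r$), so
\begin{equation*}
K_1 \;=\; \min_{(n,k) \in S}\, \Big|r - \tfrac{n}{k}\Big| \cdot |k|^{\mu'}
\end{equation*}
is a well-defined positive real number depending only on $r$ and $\epsilon$. Taking $K = K(r,\epsilon) = \min(1, K_1) > 0$ then yields the desired inequality uniformly over $(n,k) \in \mathbb{Z} \times \mathbb{Z}^*$.

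There is no real obstacle here; the only subtlety is remembering that ``$\mu(r)$ is a supremum but need not be attained'', so we must push the exponent from $\mu(r)$ up to $\mu(r)+\epsilon$ to guarantee finiteness of $S$, and that the hypothesis $r \notin \mathbb{Q}$ is essential to rule out the degenerate case $r = n/k$ that would otherwise spoil the constant $K_1$.
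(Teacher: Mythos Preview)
Your argument is correct and is the standard way to extract a uniform constant from the definition of the irrationality exponent. Note that the paper does not actually supply a proof of this lemma; it is merely quoted from \cite{zhang}, so there is nothing to compare against beyond observing that your reasoning is the natural one. One cosmetic remark: you should say explicitly that if $S=\emptyset$ one simply takes $K=1$, since otherwise $K_1$ is a minimum over an empty set.
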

Note that when $r_j$, $\widetilde{r}_j$, as introduced in \eqref{constantshir}, are rational numbers, by the Proposition \ref{base} below, we will still be able to use the inequality \eqref{resonanceineq} for these numbers with  $\mu(r_j)=\mu(\widetilde{r}_j)=1$. Next lemma collects some invariance properties of the irrationality exponent, which is proved in \cite{zhang}.
\begin{lemma} The irrationality exponent defined by \eqref{irrexp} satisfies
\begin{enumerate}
    \item For any $r\in\mathbb{R}$ and $q\in\mathbb{Q}$, $\mu(q+r)=\mu(r)$,
    \item For any $r\in\mathbb{R}$ and $q\in\mathbb{Q}\setminus\{0\}$, $\mu(qr)=\mu(r)$,
    \item For any $r\in\mathbb{R}\setminus\{0\}$, $\mu(\frac{1}{r})=\mu(r)$.
\end{enumerate}
\end{lemma}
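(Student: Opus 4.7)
My plan is to prove each identity by establishing both inequalities $\mu(\text{image}) \geq \mu(r)$ and its reverse, through an explicit transformation of rational approximants. In every case, the strategy is uniform: given infinitely many $(N,M)$ approximating one of the quantities at an exponent $\mu$ arbitrarily close to its irrationality exponent, I will write down an explicit pair $(N',M')$ of integers that approximates the other quantity at an exponent arbitrarily close to $\mu$, and then conclude by the supremum definition of $\mu(\cdot)$. The reverse inequalities will follow by applying the same construction to the corresponding inverse operation: $q \mapsto -q$ for (1), $q \mapsto 1/q$ for (2), and $r \mapsto 1/r$ for (3).

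For (1), write $q = p/m$ with $p \in \mathbb{Z}$ and $m \in \mathbb{Z}^*$. If $|r - N/M| < |M|^{-\mu}$, then
\[
\Bigl| (r+q) - \frac{Nm+pM}{mM} \Bigr| = |r - N/M| < |M|^{-\mu} = m^\mu\,|mM|^{-\mu},
\]
and since $m^\mu \leq |mM|^\epsilon$ for $|M|$ large, this produces an approximation of $r+q$ at exponent $\mu - \epsilon$. Part (2) is analogous: the identity $qr - pN/(mM) = q\,(r - N/M)$ absorbs the constant factor $|q|$ into a small power of $|mM|$ and yields the same conclusion for $qr$.

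Part (3) is the only subtle case. Starting from $|1/r - N/M| < |M|^{-\mu}$, multiplying through by $|r||M|$ and then dividing by $|N|$ gives
\[
\Bigl| r - \frac{M}{N} \Bigr| = \frac{|rN - M|}{|N|} < \frac{|r|}{|N|\,|M|^{\mu-1}}.
\]
The key observation is that good approximations to $1/r$ force $N/M \to 1/r$; in particular $|N|/|M| \to 1/|r|$, so $|M|$ and $|r|\,|N|$ are comparable up to a factor tending to $1$. Substituting $|M| \sim |r|\,|N|$ into the bound converts $|M|^{\mu - 1}$ into a constant multiple of $|N|^{\mu-1}$, producing an approximation of $r$ by $M/N$ at exponent $\mu - \epsilon$.

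The only delicate point is this conversion in part (3): I must make rigorous the passage from an approximation exponent measured against $|M|$ to one measured against $|N|$, and check that discarding the finitely many pairs where $|N|/|M|$ is not yet close to $1/|r|$ does not destroy the infinite family of approximants needed for the definition of $\mu$. Parts (1) and (2) reduce to routine bookkeeping once the correct change of variables on the approximants is written down.
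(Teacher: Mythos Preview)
Your proof plan is correct and complete as a sketch; the transformation of approximants you describe in each case is the standard elementary argument, and you have correctly identified the only subtle point (in part (3), converting a bound in powers of $|M|$ to one in powers of $|N|$ via $|N|/|M|\to 1/|r|$, which requires discarding only finitely many pairs). Note, however, that the paper does not actually give its own proof of this lemma: it merely states that the result ``is proved in \cite{zhang}'' and cites that reference, so there is no in-paper argument to compare your approach against. Your write-up therefore supplies exactly the details the paper omits by citation.
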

Using the lemma above we may write
\begin{align}\label{muequalities}
\mu(\widetilde{r}_2)=\mu(\widetilde{r}_1)=\mu(r_1)=\mu(r_2)=\mu(\sqrt{12a-3})=:\mu(\rho_a).    
\end{align}
With the notations introduced above we state our smoothing result as follows: 
\begin{theorem}\label{smoothingresulthir}
Fix $s>\frac{1}{2}$ and $a\in(\frac{1}{4},1)$. Consider the solution of \eqref{hir} with initial data $(u(0,x),v(0,x))=(u_0(x),v_0(x))\in \dot{H}^s(\mathbb{T})\times H^s(\mathbb{T})$. Then for any $s_1-s<\min\{1,s-\frac{1}{2}, s+2-\mu(\rho_a), 2s+1-\mu(\rho_a)\}$,
we have \begin{align}
   \label{smoothinghir1} &u(t,x)-e^{-ta \partial_x^3}u_0\in C_t^0H^{s_1}_x, \\& \label{smoothinghir2} v(t,x)-e^{-t\partial_x^3}v_0\in C_t^0H^{s_1}_x.
\end{align}
For almost every $a$, \eqref{smoothinghir1} and \eqref{smoothinghir2} hold with $s_1-s<\min\{1,s-\frac{1}{2}\}$. When $a=\frac{3p(p-q)+q^2}{q^2}$, $(p,q)\in \mathbb{Z}\times\mathbb{Z}$ with $\frac{q}{2}<p<q$, fix $s>1$, then the smoothing statements \eqref{smoothinghir1} and \eqref{smoothinghir2} are valid for $s_1-s\leq\min\{1-,s-1\}$ instead.
Assume that we have a growth bound  $\norm{u(t)}_{H^s}+\norm{v(t)}_{H^s}\lesssim \langle t\rangle^{\beta(s)}$,
for some $\beta(s)>0$. Then 
\begin{align*}
    \norm{u(t)-e^{-ta \partial_x^3}u_0}_{H^{s_1}}+\norm{v(t)-
e^{-t \partial_x^3}v_0}_{H^{s_1}}\leq C(a,s,s_1,\norm{u_0}_{H^s},\norm{u_0}_{H^s})\langle t\rangle^{1+\frac{9}{2}\beta(s)}.
\end{align*}
\end{theorem}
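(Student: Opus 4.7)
The plan is to implement a normal form reduction by differentiation by parts in time, in the spirit of Babin-Ilyin-Titi and Erdo\u{g}an-Tzirakis, followed by Bourgain $X^{s,b}$ multilinear estimates on the resulting cubic remainder. Passing to the interaction representation $w_k(t) = e^{iak^3 t}\widehat{u}_k(t)$ and $\widetilde{w}_k(t) = e^{ik^3 t}\widehat{v}_k(t)$, the Fourier-side Duhamel formula for each component becomes a time integral of a quadratic sum weighted by oscillatory factors $e^{it\Phi}$, where $\Phi$ is one of the three resonance functions: $3akk_1k_2$ (from $(u^2)_x$), $ak^3-k_1^3-k_2^3=-3k(k_1-r_1k)(k_1-r_2k)$ (from $(v^2)_x$), and $k^3-ak_1^3-k_2^3=(1-a)k_1(k_1-\widetilde{r}_1 k)(k_1-\widetilde{r}_2 k)$ (from $uv_x$). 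Writing $e^{it\Phi}=(i\Phi)^{-1}\partial_t e^{it\Phi}$ and integrating by parts in $t$ produces a bilinear boundary term, which I would absorb into a modified linear profile, together with a cubic volume term in which one of $u$ or $v$ has been replaced by its nonlinearity via the equations. In Fourier space this moves derivatives off the nonlinearity and onto the denominator $\Phi$, which is precisely the source of smoothing.

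The analytic core is then to estimate the modified Duhamel integrand in $X^{s_1,-1/2+}$. By duality, the bounds reduce to trilinear $L^2_{t,x}$ estimates with the customary sum over frequencies and modulations. I would decompose according to whether $\Phi$ is large, or whether the resonance condition $k_1\simeq r_j k$ or $k_1\simeq \widetilde{r}_j k$ is active. Off the resonant set one has $|\Phi|\gtrsim |k|^3$ or $|k_1|^3$ and dividing by $\Phi$ yields three spare derivatives, which is more than enough to close. On the resonant set, the Yang-Zhang Diophantine lemma together with \eqref{muequalities} gives $|k_1-r_jk|\gtrsim |k|^{1-\mu(\rho_a)-}$, so that $|\Phi|\gtrsim |k|^{3-\mu(\rho_a)-}$ (or an analogous $|k_1|$-weighted bound for the second phase). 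This is the origin of the constraints $s_1-s<s+2-\mu(\rho_a)$ and $s_1-s<2s+1-\mu(\rho_a)$ in the conclusion: they track whether one or two of the three output frequencies lie on the resonant variety. For almost every $a$ one has $\mu(\rho_a)=2$ by Khintchine, collapsing those constraints and leaving only $s_1-s<\min\{1,s-\tfrac12\}$, where the bound $1$ reflects the standard KdV-type trilinear estimate and $s-\tfrac12$ is the additional derivative gain needed in the high-high-to-high interaction.

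For the exceptional rational parameters $a=(3p(p-q)+q^2)/q^2$, the quantities $r_j$ and $\widetilde{r}_j$ are rational with $\mu(\rho_a)=1$, and the Diophantine lower bound is replaced by the exact divisibility $q\mid(2k_1-k)$, which carves out a sublattice of density $1/q$ in the resonant set; a direct $\ell^2$ summation on this sublattice recovers essentially one derivative, giving the sharp, non-strict bound $s_1-s\le 1-$. The polynomial-in-time bound follows by iterating the unit-time estimate: one splits $[0,t]$ into $O(t)$ pieces, on each of which the normal-form representation is valid and on each of which the cubic term is controlled by three factors of $\|u\|_{H^s}+\|v\|_{H^s}\lesssim \langle t\rangle^{\beta(s)}$; tracking the $b=1/2+$ time-weight losses and the local existence scale $\delta\sim(\|u_0\|_{H^s}+\|v_0\|_{H^s})^{-3/2}$ together produces the exponent $1+\tfrac{9}{2}\beta(s)$. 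The main obstacle I anticipate is the trilinear bookkeeping across all resonance subcases generated by the two distinct phase functions, particularly the mixed configurations where one factor stems from the $u$-equation and two from the $v$-equation, so that the output frequency $k$ simultaneously approaches both varieties $\{k_1\simeq r_j k\}$ and $\{k_1\simeq \widetilde{r}_jk\}$; a careful intersection analysis is needed to ensure that no additional derivative loss accumulates beyond the single factor of $\mu(\rho_a)$ already present.
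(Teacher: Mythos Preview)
Your proposal follows essentially the same strategy as the paper: differentiation by parts in the interaction picture to produce bilinear boundary terms $B_j$ and cubic remainders $R_j$, followed by $X^{s,b}$ estimates on the latter, with the near-resonant contributions controlled via the Diophantine lower bound $|k_1-r_jk|\gtrsim|k|^{1-\mu(\rho_a)-}$. The case analysis you sketch matches the paper's Propositions~\ref{hirprop1}--\ref{hirprop5}, and your iteration for the polynomial growth bound is the paper's argument (note the number of subintervals is $t/\delta\approx t\langle t\rangle^{3\beta(s)/2}$, not $O(t)$, which together with the cubic factor $\langle t\rangle^{3\beta(s)}$ on each gives $1+\tfrac{9}{2}\beta(s)$).

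There is, however, a genuine gap in your treatment of the rational case $a=(3p(p-q)+q^2)/q^2$. When $r_1,r_2\in\mathbb{Q}$ there exist frequencies $k$ with $r_1k\in\mathbb{Z}$, and at these the phase $\Phi=-3k(k_1-r_1k)(k_1-r_2k)$ is \emph{exactly zero}, not merely small; the identity $e^{it\Phi}=(i\Phi)^{-1}\partial_t e^{it\Phi}$ is unavailable, so no ``sublattice $\ell^2$ summation'' can extract a derivative from a vanishing denominator. The paper handles this by separating these exact resonances \emph{before} integrating by parts, as explicit terms $\rho_2(v,v)_k=-2i\beta k\,v_{r_1k}v_{r_2k}$ and $\rho_3(u,v)_k$ (Proposition~\ref{base}), and estimating them directly in $H^{s_1}$. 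The crude bound $\|\rho_j\|_{H^{s_1}}\lesssim\|u\|_{H^s}\|v\|_{H^s}$ for $j=2,3$ holds only when $s_1-s\le s-1$ (Proposition~\ref{hirprop01}); this, not a lattice argument, is the origin of the restriction $s>1$ and $s_1-s\le\min\{1-,s-1\}$ in the rational statement.
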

\begin{Remark}
In the case the coefficients $r_1$ and $r_2$ in \eqref{constantshir} are rational numbers, which is the case when $a$ is a rational of the form $\frac{3p(p-q)+q^2}{q^2}$ as stated in the above theorem, we have to control some additional terms due to the resonances, in which case smoothing is attained solely for $s>1$. On the other side, if the coefficients $r_1$, $r_2$ are irrational algebraic numbers, which is the case when $a$ is a rational number such that $a\neq\frac{3p(p-q)+q^2}{q^2}$ with $\frac{q}{2}<p<q$, then by \eqref{muequalities}, $\mu(\rho_a)=2$ in Theorem \ref{smoothingresulthir} yields the best possible smoothing. Indeed, the best regularity gain given by the Theorem \ref{smoothingresulthir} is reached for almost every $a\in(\frac{1}{4},1)$. As a consequence, with regards to smoothing, the above discussion shows how the regularity level is unstable under a slight perturbation of $a$ within $(\frac{1}{4},1)$.    
\end{Remark}
Using smoothing estimates one can obtain growth bounds for higher order Sobolev norms in the lack of complete integrability. As a corollory of the smoothing theorem above, we obtain the following result.

\begin{Corollary}
For any $s\geq1$ and almost every $a\in(\frac{1}{4},1)$ for which $\mu(\rho_a)=2$, the global solution of \eqref{hir} with $\dot{H}^s\times H^s$ data satisfies the growth bound 
\begin{align*}
\norm{u(t)}_{H^s}+\norm{v(t)}_{H^s}\leq C_0(1+|t|)^{C_1}    
\end{align*}
where $C_0$ depends on $a,s,\norm{u_0}_{H^s},\norm{v_0}_{H^s}$ and $C_1$ depends on $s$.
\end{Corollary}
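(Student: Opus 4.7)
The plan is to perform a bootstrap on the Sobolev index, feeding the smoothing estimate of Theorem \ref{smoothingresulthir} into itself, starting from a uniform $H^1$ bound coming from the conservation laws \eqref{conservationhir}.

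First I would settle the base case $s=1$. Under the global well-posedness hypotheses $\beta<0$ and $a\in[\tfrac14,1)$ implicit in the statement, $E_1(u,v)$ is coercive on $L^2\times L^2$ and gives a time-independent $L^2$ bound. Plugging this into $E_2(u,v)$ and using Gagliardo--Nirenberg to absorb the cubic terms $\int u^3$ and $\int uv^2$ into the $H^1$ seminorm produces a uniform-in-time bound
\[
\|u(t)\|_{H^1}+\|v(t)\|_{H^1}\le C(\|u_0\|_{H^1},\|v_0\|_{H^1}),
\]
so one may take $\beta(1)=0$ as an admissible growth exponent at the energy level.

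For the inductive step, suppose one has already obtained $\|u(t)\|_{H^\sigma}+\|v(t)\|_{H^\sigma}\lesssim\langle t\rangle^{\beta(\sigma)}$ for some $\sigma\in[1,s]$. Because $\mu(\rho_a)=2$, Theorem \ref{smoothingresulthir} applies at level $\sigma$ with admissible gain up to $\min\{1,\sigma-\tfrac12\}$. Pick $\sigma_1\le s$ strictly inside the admissible interval, so that $(u_0,v_0)\in H^{\sigma_1}\times H^{\sigma_1}$, and invoke the smoothing bound
\[
\|u(t)-e^{-ta\partial_x^3}u_0\|_{H^{\sigma_1}}+\|v(t)-e^{-t\partial_x^3}v_0\|_{H^{\sigma_1}}\le C\,\langle t\rangle^{1+\frac{9}{2}\beta(\sigma)}.
\]
Combining with the isometry of the Airy groups on $H^{\sigma_1}$ and with $\|u_0\|_{H^{\sigma_1}}+\|v_0\|_{H^{\sigma_1}}\le\|u_0\|_{H^s}+\|v_0\|_{H^s}$ yields $\|u(t)\|_{H^{\sigma_1}}+\|v(t)\|_{H^{\sigma_1}}\lesssim\langle t\rangle^{1+\frac{9}{2}\beta(\sigma)}$, so $\beta(\sigma_1)=1+\tfrac{9}{2}\beta(\sigma)$ is admissible.

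Iterating closes the bootstrap in finitely many steps: since $\min\{1,\sigma-\tfrac12\}\ge\tfrac12$ uniformly for $\sigma\ge 1$, at most $N=O(s)$ applications carry the regularity from $\sigma_0=1$ up to $\sigma_N=s$. The recursion $\beta_{n+1}=1+\tfrac{9}{2}\beta_n$ with $\beta_0=0$ terminates at a finite exponent $C_1=\beta_N$ depending only on $s$, and absorbing $a$, $s$ and the initial norms into $C_0$ yields the claimed inequality. The only real obstacle is the potentially rapid compounding of $\beta_n$ (the factor $9/2$ per iteration coming from the time power in Theorem \ref{smoothingresulthir}); this is harmless here precisely because the uniform lower bound $\tfrac12$ on the smoothing gain forces the number of iterations to depend only on $s$, so $C_1=C_1(s)$ is a fixed, if large, constant.
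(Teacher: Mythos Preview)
Your argument is correct and follows essentially the same route as the paper: start from the uniform $H^1$ bound furnished by the conserved energies, then iterate the smoothing estimate of Theorem~\ref{smoothingresulthir} to climb from regularity $\sigma$ to $\sigma_1<\sigma+\min\{1,\sigma-\tfrac12\}$, propagating the polynomial growth exponent via $\beta(\sigma_1)=1+\tfrac{9}{2}\beta(\sigma)$. Your version is in fact a bit more explicit than the paper's about why the number of iterations, and hence $C_1$, depends only on $s$ (the uniform gain $\ge\tfrac12$).
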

\begin{proof}
Due to the conserved energies, $H^1$ norms of $u$ and $v$ are bounded for all times. The idea is to use the result of Theorem \ref{smoothingresulthir} repeatedly to obtain the growth bound for the Sobolev norms above the energy level. To use this, suppose that the claim of the corollory holds for $s\geq1$. Hence for $s_1\in(s,s+\min\{1,s-\frac{1}{2}\})$, Theorem \ref{smoothingresulthir} leads to
\begin{align*}
\norm{u(t)-e^{-at\partial_x^3}u_0}_{H^{s_1}}+\norm{v(t)-e^{-t\partial_x^3}v_0}_{H^{s_1}}\leq C(1+|t|)^{1+\frac{9}{2}\beta(s)},   
\end{align*}
from this and the fact that linear groups are unitary, we get
\begin{align*}
\norm{u(t)}_{H^{s_1}}+\norm{v(t)}_{H^{s_1}}\leq \norm{u_0}_{H^{s_1}}+\norm{v_0}_{H^{s_1}}+C_0(1+|t|)^{C_1}.
\end{align*}
Continuing iteratively in this way, we reach any index $s_1$.
\end{proof}

\subsection{Existence of a Global Attractor for the Hirota-Satsuma System}
The essential tool in establishing the existence of a global atractor for the dissipative Hirota-Satsuma system \eqref{disshirfirst} in this context will be the smoothing estimates which are to be discussed later. The existence of global attractors makes sense only for the globally well-posed problems, in this regard we note that the initial value problem \eqref{disshirfirst} is locally and globally well-posed in $\dot{H}^1\times H^1$. The local well-posedness follows by using the estimates of \cite{zhang} and the global well-posedness follows from the energy estimate of Lemma \ref{energyhir} which also implies the existence of an absorbing ball. To set the stage for the description of the problem, let $U(t)$ be the semigroup operator mapping data to solution in the phase space.  
\begin{Definition}[\cite{Temam}]
We say that a compact subset $\mathcal{A}$ of the phase space $H$ is a global attractor for the semigroup $\{U(t)\}_{t\geq 0}$, if $U(t)\mathcal{A}=\mathcal{A}$ for all $t>0$, and $$\lim\limits_{t\rightarrow \infty}d(U(t)g,\mathcal{A})=0,\,\,\forall g\in H.$$
\end{Definition}
Next we define an absorbing set into which all solutions enter eventually.  
\begin{Definition}[\cite{Temam}]
A bounded subset $\mathcal{B}$ of a phase space $H$ is called an absorbing set if for any bounded $\mathcal{S}\subset H$, there exists $T=T(\mathcal{S})$ such that $U(t)\mathcal{S}\subset \mathcal{B}$ for all $t\geq T$.
\end{Definition}
Note that the existence of a global attractor for a semigroup implies the existence of an absorbing set whereas the converse holds only when a semigroup is asymptotically compact:
\begin{theorem}[\cite{Temam}]\label{attractortemam}
Suppose that $H$ is a metric space and $U(t):H\rightarrow H$ is a continuous semigroup defined for all $t\geq 0$. Furthermore suppose that there is an absorbing set $\mathcal{B}$. If the semigroup $\{U(t)\}_{t\geq 0}$ is asymptotically compact, that is, for every bounded sequence $\{x_k\}$ in $H$ and every sequence of times $t_k\rightarrow \infty$, the set $\{U(t_k)x_k\}_k$ is relatively compact in $H$, then the $\omega$-limit set $$\omega(\mathcal{B})=\bigcap\limits_{s\geq0}\overline{\bigcup\limits_{t\geq s} U(t)\mathcal{B}}$$
is a global attractor, where the closure is taken on $H$.
\end{theorem}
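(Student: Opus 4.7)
The plan is to verify the three defining properties of a global attractor — compactness, semigroup invariance, and universal attraction — for the candidate set $\omega(\mathcal{B})$. The proof hinges throughout on the useful characterization that $y\in\omega(\mathcal{B})$ if and only if there exist $x_k\in\mathcal{B}$ and $t_k\to\infty$ with $U(t_k)x_k\to y$, which is immediate from the definition of the intersection of closures.

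For compactness, I would show every sequence $\{y_n\}\subset\omega(\mathcal{B})$ admits a subsequence converging within $\omega(\mathcal{B})$. A diagonal extraction picks $x_n\in\mathcal{B}$ and $t_n\geq n$ with $d(U(t_n)x_n,y_n)<1/n$; asymptotic compactness applied to $\{U(t_n)x_n\}$ yields a convergent subsequence whose limit is simultaneously the limit of the associated subsequence of $\{y_n\}$ and, by the characterization above, an element of $\omega(\mathcal{B})$. For invariance $U(t)\omega(\mathcal{B})=\omega(\mathcal{B})$, the inclusion $\subseteq$ follows from $U(t_k+t)x_k=U(t)U(t_k)x_k$ together with continuity of $U(t)$. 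The reverse inclusion uses asymptotic compactness once more: given $y=\lim U(t_k)x_k$, extract a convergent subsequence $U(t_k-t)x_k\to z$ (bounded since $x_k\in\mathcal{B}$ and $t_k-t\to\infty$, so $z\in\omega(\mathcal{B})$), and continuity gives $U(t)z=y$.

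The main obstacle is universal attraction, $\lim_{t\to\infty}d(U(t)g,\omega(\mathcal{B}))=0$ for every $g\in H$. I would argue by contradiction: if some $g\in H$, $\epsilon>0$, and $t_k\to\infty$ satisfy $d(U(t_k)g,\omega(\mathcal{B}))\geq\epsilon$, then because $\mathcal{B}$ absorbs the bounded singleton $\{g\}$ there is $T>0$ with $U(T)g\in\mathcal{B}$, and the semigroup identity $U(t_k)g=U(t_k-T)U(T)g$ exhibits $\{U(t_k)g\}$ as an orbit through a single point of $\mathcal{B}$ evaluated at times $t_k-T\to\infty$. Asymptotic compactness applied to the constant (and hence bounded) sequence $x_k\equiv U(T)g$ then produces a subsequential limit $y$, which by the characterization of the first step lies in $\omega(\mathcal{B})$, contradicting the standing lower bound on the distance. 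The delicate point is the coordination between the absorbing hypothesis, used to push arbitrary initial data into $\mathcal{B}$ after a single time shift, and the asymptotic compactness hypothesis, which then supplies the missing sequential limits; this combination is precisely what promotes a bounded absorbing set to a compact attracting one.
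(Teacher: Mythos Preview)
Your proof is correct and follows the standard route one finds in Temam's monograph: sequential characterization of the $\omega$-limit set, compactness via a diagonal argument plus asymptotic compactness, two-sided invariance using continuity of $U(t)$ and a backward extraction, and attraction by contradiction after shifting an arbitrary point into $\mathcal{B}$ via the absorbing property. The paper itself does not supply a proof of this theorem; it is quoted from \cite{Temam} and used as a black box, so there is no in-paper argument to compare against. One small omission worth noting: you should state explicitly that $\omega(\mathcal{B})$ is nonempty (take any $x\in\mathcal{B}$ and any $t_k\to\infty$; asymptotic compactness produces a limit point), since compactness in the sense of ``every sequence has a convergent subsequence'' does not by itself rule out the empty set, and the definition of global attractor in the paper presupposes a compact, hence nonempty, set.
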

Therefore, in the presence of an absorbing set the proof of existence of a global attractor reduces to proving asymptotic compactness of the evolution operator. In our discussion, the proof of the asymptotic compactness of the flow depends very much on the smoothing estimate for the forced and weakly damped system that we establish later. Thus we have the following result:
\begin{theorem}\label{attrhir}
Consider the forced and weakly damped Hirota-Satsuma system \eqref{disshirfirst} on $\mathbb{T}\times[0,\infty)$ with $(u_0,v_0)\in \dot{H}^1\times H^1$. Then, for almost every $a\in(\frac{1}{4},1)$, the equation has a global attractor in $\dot{H}^1\times H^1$. Moreover, for any $\alpha<\frac{1}{2}$, the global attractor is a compact subset of $H^{1+\alpha}\times H^{1+\alpha}$. 
\end{theorem}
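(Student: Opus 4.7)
The plan is to invoke Theorem \ref{attractortemam}, so I would verify three things in order: continuity of the semigroup $U(t)$ on $\dot{H}^1\times H^1$, existence of a bounded absorbing set $\mathcal B$, and asymptotic compactness of $\{U(t)\}_{t\geq 0}$. Continuity will follow from the local well-posedness estimates of \cite{zhang}, since the linear damping terms $\gamma_i u, \gamma_i v$ and the time-independent forcings $f, g \in H^1$ are harmless perturbations of the $X^{s,1/2}_{\cdot,\delta}$ contraction argument. The absorbing ball is already promised by Lemma \ref{energyhir}: a Gronwall manipulation of the $H^1$-level energy inequality yields a uniform eventual bound on $\|u(t)\|_{H^1}+\|v(t)\|_{H^1}$ depending only on $\gamma_i, a, \beta, \|f\|_{H^1}, \|g\|_{H^1}$.

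The main step, and the main obstacle, is asymptotic compactness. I would split each solution of \eqref{disshirfirst} via the Duhamel formula,
\begin{align*}
    u(t) &= e^{-\gamma_1 t}\, e^{-a t \partial_x^3} u_0 + N_1(u,v;f)(t), \\
    v(t) &= e^{-\gamma_2 t}\, e^{-t \partial_x^3} v_0 + N_2(u,v;g)(t),
\end{align*}
where $N_1, N_2$ collect the nonlinear Duhamel integrals together with the forcing contributions. The linear pieces decay exponentially to zero in $\dot{H}^1\times H^1$. For the nonlinear pieces, I would establish a damped counterpart of Theorem \ref{smoothingresulthir}: for any trajectory that has entered $\mathcal B$, and for almost every $a\in(\tfrac{1}{4},1)$ (so that $\mu(\rho_a)=2$ and the $\min\{s+2-\mu(\rho_a),2s+1-\mu(\rho_a)\}$ terms do not bind at $s=1$), one expects a uniform-in-time bound
\begin{align*}
    \|N_1(u,v;f)(t)\|_{H^{1+\alpha}}+\|N_2(u,v;g)(t)\|_{H^{1+\alpha}}\leq C\bigl(\mathcal B,\|f\|_{H^1},\|g\|_{H^1}\bigr)
\end{align*}
for every $t\geq 0$ and every $\alpha<\tfrac{1}{2}$. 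The hard part is precisely this globalization: the smoothing proof of Theorem \ref{smoothingresulthir} is carried out \emph{locally in time} through the restricted norms $X^{s,b}_{\cdot,\delta}$, whereas here I need the bound for all $t$. To achieve this I would run the normal form / differentiation-by-parts reduction on each unit-length time interval and exploit the damping factor $e^{-\gamma_i(t-\tau)}$ inside the Duhamel integral as a Gronwall-type weight that absorbs both the cubic remainders produced by the normal form and the $t$-independent forcing; this is the strategy implemented in \cite{compaanmaj,erdoganlong,erdoganzak} for the KdV, Zakharov, and Majda--Biello models.

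Once the damped smoothing estimate is in hand, asymptotic compactness is immediate: given any bounded sequence $\{(u_0^k,v_0^k)\}$ in $\dot{H}^1\times H^1$ and any $t_k\to\infty$, the linear pieces of $U(t_k)(u_0^k,v_0^k)$ vanish in norm, while the nonlinear pieces are uniformly bounded in $H^{1+\alpha}\times H^{1+\alpha}$, and the Rellich compact embedding $H^{1+\alpha}\hookrightarrow H^1$ on $\mathbb T$ extracts a subsequence converging in $\dot{H}^1\times H^1$. Theorem \ref{attractortemam} then produces the global attractor $\mathcal A=\omega(\mathcal B)$. For the higher regularity statement $\mathcal A\subset H^{1+\alpha}\times H^{1+\alpha}$ with $\alpha<\tfrac12$, I would use the invariance $U(t)\mathcal A=\mathcal A$: every point of $\mathcal A$ is of the form $U(t)(u_0,v_0)$ for some $(u_0,v_0)\in\mathcal A\subset\mathcal B$ and arbitrarily large $t>0$; taking $t$ large enough that the linear pieces are negligible and applying the damped smoothing estimate at a slightly larger index $\alpha'\in(\alpha,\tfrac12)$ shows $\mathcal A$ is bounded in $H^{1+\alpha'}\times H^{1+\alpha'}$, and the compact embedding $H^{1+\alpha'}\hookrightarrow H^{1+\alpha}$ upgrades this to compactness in $H^{1+\alpha}\times H^{1+\alpha}$.
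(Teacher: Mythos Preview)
Your proposal is correct and follows essentially the same route as the paper: Lemma \ref{energyhir} for the absorbing ball, a damped version of the smoothing estimate (the paper's Theorem \ref{smoothingdisshir}) to put the nonlinear Duhamel part into $H^{1+\alpha}\times H^{1+\alpha}$ uniformly in time, then the linear/nonlinear splitting plus Rellich for asymptotic compactness, and finally a bootstrap at a slightly higher index $\alpha'\in(\alpha,\tfrac12)$ to obtain compactness of $\mathcal A$ in $H^{1+\alpha}\times H^{1+\alpha}$. The paper's statement of the damped smoothing estimate carries along the resonant integrals $\int_0^t e^{-(a\partial_x^3+\gamma)(t-r)}\rho_2(v,v)\,dr$ and $\int_0^t e^{-(\partial_x^3+\gamma)(t-r)}\rho_3(u,v)\,dr$ explicitly, but since $\rho_2=\rho_3=0$ whenever $r_1,r_2\notin\mathbb Q$---hence for almost every $a$---this coincides with your $N_1,N_2$ decomposition in the regime the theorem addresses.
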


\section{Proof of Theorem \ref{smoothingresulthir} }
We start by writing \eqref{hir} in an equivalent form through differentiation parts. Below $\sum^*$ denotes the summation over all terms for which the corresponding denominator is never zero. The requirement of the nonzero denominators for the sums not written by this notation is provided by the mean zero assumption on $u_0$. 
\begin{proposition} \label{base} 
 Let $u_0\in\dot{H}^s$. The system \eqref{hir} can be written as 
\begin{multline*}
\begin{cases} 
\partial_t\big(e^{-ia k^3t}[u_k+B_1(u,u)_k+B_2(v,v)_k]\big)=e^{-ia k^3t}\big[R_1(u,v,v)_k+R_2(u,u,u)_k\\\hspace{8.9cm}+R_3(u,v,v)_k+\rho_1(u,u)_k+\rho_2(v,v)_k\big] \\\partial_t\big(e^{-ik^3t}[v_k+B_3(u,v)_k]\big)=e^{-ik^3t}\big[R_4(u,u,v)_k+\frac{\beta}{3a}R_4(v,v,v)_k+R_5(u,u,v)_k+\rho_3(u,v)_k\big] 
\end{cases} 
\end{multline*} 
 where the terms are defined as follows
\begin{align*}
 &B_1(f,g)_k=-\sum_{k_1+k_2=k}\frac{f_{k_1}g_{k_2}}{k_1k_2},\,\,B_2(f,g)_k=-\beta k\sum_{k_1+k_2=k}^*\frac{f_{k_1}g_{k_2}}{a k^3-k_1^3-k_2^3}\\& B_3(f,g)_k=-3 \sum_{k_1+k_2=k}^* \frac{k_2 f_{k_1}g_{k_2}}{k^3-a k_1^3-k_2^3}   
 \\&R_1(f,g,h)_k=-2i\beta \sum_{k_1+k_2+k_3=k}^* \frac{k_2 f_{k_1}g_{k_2}h_{k_3}}{(k_1+k_2-r_1k)(k_1+k_2-r_2k)}\\& R_2(f,g,h)_k=6ia \underset{\substack{k_1+k_2+k_3=k\\ (k_1+k_2)(k_2+k_3)(k_3+k_1)\neq 0}}\sum \frac{f_{k_1}g_{k_2}h_{k_3}}{k_1}\\&
 R_4(f,g,h)_k=9ia\sum_{k_1+k_2+k_3=k}^* \frac{k_3(k_1+k_2)f_{k_1}g_{k_2}h_{k_3}}{k^3-a (k_1+k_2)^3-k_3^3}\\& R_5(f,g,h)_k=9i\sum_{k_1+k_2+k_3=k}^* \frac{k_3(k_2+k_3) f_{k_1}g_{k_2}h_{k_3}}{k^3-a k_1^3-(k_2+k_3)^3}\\& \rho_1(f,g)_k=-\frac{6ia}{k}|f_k|^2g_k, \hspace{1cm} \rho_2(f,g)_k=-2i\beta kf_{r_1k}g_{r_2k}\\& \rho_3(f,g)_k= -3ik\big[(1-\tilde{r}_1)f_{\tilde{r}_1k}g_{(1-\tilde{r}_1)k}+(1-\tilde{r}_2)f_{\tilde{r}_2k}g_{(1-\tilde{r}_2)k}\big].
\end{align*}
\end{proposition}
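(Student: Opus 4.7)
The plan is to derive the stated identity by a single iteration of the normal-form (differentiation-by-parts) procedure in the interaction picture of \eqref{hir}. First I would pass to Fourier side and, using $\widetilde{u}_k=e^{-iak^3 t}u_k$ and $\widetilde{v}_k=e^{-ik^3 t}v_k$, record that
\begin{align*}
\partial_t\widetilde{u}_k&=-3iak\!\!\sum_{k_1+k_2=k}\!\!e^{-3iakk_1k_2 t}\widetilde{u}_{k_1}\widetilde{u}_{k_2}-i\beta k\!\!\sum_{k_1+k_2=k}\!\!e^{-it\Phi}\widetilde{v}_{k_1}\widetilde{v}_{k_2},\\
\partial_t\widetilde{v}_k&=-3i\!\!\sum_{k_1+k_2=k}\!\!k_2\,e^{-it\Psi}\widetilde{u}_{k_1}\widetilde{v}_{k_2},
\end{align*}
where $\Phi=ak^3-k_1^3-k_2^3$ and $\Psi=k^3-ak_1^3-k_2^3$. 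Vieta applied to the quadratics $3x^2-3x+(1-a)$ and $(1-a)x^2-3x+3$ produces the factorizations $\Phi=-3k(k_1-r_1k)(k_1-r_2k)$ and $\Psi=(1-a)k_1(k_1-\widetilde{r}_1k)(k_1-\widetilde{r}_2k)$ cited in the statement.

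Next I would apply the identity $e^{it\phi}=(i\phi)^{-1}\partial_t(e^{it\phi})$ to the oscillatory factor in each quadratic, at every frequency triple where the phase does not vanish. For the $u^2$ term in the first equation the phase $-3akk_1k_2$ fails to vanish whenever $k_1,k_2\neq 0$, which is forced by the mean-zero hypothesis on $u_0$ (preserved by the flow); no quadratic resonance appears and the manipulation transfers $-\partial_t(e^{-iak^3t}B_1(u,u)_k)$ to the left, leaving a cubic remainder. For the $v^2$ term the phase $\Phi$ can vanish on $k_1=r_jk$ (when integer), so I would split the sum into the non-resonant piece, which is absorbed into $\partial_t(e^{-iak^3t}B_2(v,v)_k)$ modulo a cubic remainder, and the resonant piece, where $e^{-it\Phi}=1$ and $e^{-i(k_1^3+k_2^3)t}$ collapses to $e^{-iak^3t}$ on the resonance; using $r_1+r_2=1$ and the symmetry $k_1\leftrightarrow k_2$ of $v_{k_1}v_{k_2}$, the two resonances combine into $e^{-iak^3t}\rho_2(v,v)_k$. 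The $uv_x$ term in the second equation is treated identically: the non-resonant contribution produces $-\partial_t(e^{-ik^3t}B_3(u,v)_k)$ plus a cubic remainder, and the resonances at $k_1=\widetilde{r}_jk$ assemble into $e^{-ik^3t}\rho_3(u,v)_k$.

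I would then expand the cubic remainders. Each has the form $\sum^\ast \Lambda(k,k_1,k_2)e^{it\phi}\partial_t(\widetilde{w}_{k_1}\widetilde{w}_{k_2})$ for suitable $\widetilde{w}\in\{\widetilde{u},\widetilde{v}\}$. Expanding $\partial_t$ via the product rule and re-substituting the two equations above produces triple sums over $k_1+k_2+k_3=k$ in which the accumulated exponentials always collapse to the outer phase, because $\Phi+ak_1^3+k_2^3=ak^3$ and $\Psi+ak_1^3+k_2^3=k^3$. After re-indexing, the cubic multipliers read off exactly as the ones defining $R_1,R_2,R_3$ on the $u$-side (with denominators inherited from the factorization of $\Phi$) and $R_4,R_5$ on the $v$-side (the denominators $k^3-a(k_1+k_2)^3-k_3^3$ and $k^3-ak_1^3-(k_2+k_3)^3$ coming respectively from expanding $\partial_t\widetilde{u}$ and $\partial_t\widetilde{v}$ inside $\partial_t(\widetilde{u}\widetilde{v})$); the $\sum^\ast$ restriction marks precisely the triples where these denominators vanish. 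The coefficient $\beta/(3a)$ in front of $R_4(v,v,v)$ emerges from matching $-i\beta k_1$ (the $v^2$-part of $\partial_t\widetilde{u}_{k_1}$) against the $9ia$ normalization of $R_4$.

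The point that requires care is the extraction of the local term $\rho_1(u,u)_k=-\tfrac{6ia}{k}|u_k|^2u_k$. Because the factor $k_1$ in the $u^2$-part of $\partial_t\widetilde{u}_{k_1}$ cancels the $1/k_1$ coming from $B_1$, the cubic ``uuu'' contribution reads $6ia\sum\tfrac{u_{n_1}u_{n_2}u_{n_3}}{n_3}$ with only the single constraint $n_1+n_2\neq 0$, while $R_2$ carries the stronger restriction $(n_1+n_2)(n_2+n_3)(n_3+n_1)\neq 0$. The difference consists of triples with $n_2+n_3=0$ or $n_3+n_1=0$. For configurations where exactly one pair-sum vanishes, the contribution reduces to a tail of $\sum_{n\neq 0,\pm k}\tfrac{|u_n|^2}{n}$, which vanishes by the odd-symmetry identity $\sum_{n\neq 0}\tfrac{|u_n|^2}{n}=0$; the only surviving configuration is the triple $(n_1,n_2,n_3)=(k,k,-k)$, where two pair-sums vanish simultaneously, and a direct evaluation gives exactly $\rho_1(u,u)_k$. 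Once this extraction is carried out, collecting the three time-derivatives on the left and regrouping the remaining terms on the right yields the proposition.
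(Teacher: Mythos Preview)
Your proposal is correct and follows essentially the same normal-form/differentiation-by-parts route as the paper: pass to the interaction picture, factor the phases via $r_j,\widetilde r_j$, split off the resonant quadratic pieces as $\rho_2,\rho_3$, integrate by parts in $t$ on the non-resonant part to produce $B_1,B_2,B_3$, and then re-substitute the equations to generate the cubic remainders $R_1$--$R_5$. Your treatment of the $\rho_1$ extraction (splitting the ``uuu'' excess into the exactly-one-pair-sum-vanishing configurations, which cancel by the odd symmetry $\sum_{n\neq 0,\pm k}|u_n|^2/n=0$, and the double-vanishing triple $(k,k,-k)$) is in fact more explicit than the paper, which simply cites \cite{titi,erdogan} at that step.
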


\begin{proof}
Writing \eqref{hir} on the Fourier side we have
\begin{align*}
\begin{cases}
\partial_t u_k-ia k^3u_k+3ia k  \sum\limits_{k_1+k_2=k}u_{k_1}u_{k_2}  +i\beta k\sum\limits_{k_1+k_2=k}v_{k_1}v_{k_2}=0 \\ \partial_t v_k-ik^3v_k+3i \sum\limits_{k_1+k_2=k}k_2u_{k_1}v_{k_2}=0.
\end{cases}
\end{align*}
Via the substitution $f_k(t)=e^{-ia k^3t}u_k(t)$ and $g_k(t)=e^{-ik^3t}v_k(t)$, the above system transforms to
\begin{align}\label{reducedeq}
\begin{cases}
\partial_tf_k=-3iak \sum\limits_{k_1+k_2=k}e^{-ia t(k^3-k_1^3-k_2^3)}f_{k_1}f_{k_2}-i\beta k \sum\limits_{k_1+k_2=k}e^{-it(a k^3-k_1^3-k_2^3)}g_{k_1}g_{k_2} \\ \partial_tg_k=-3i\sum\limits_{k_1+k_2=k}k_2e^{-it(k^3-a k_1^3-k_2^3)}f_{k_1}g_{k_2}.
\end{cases}
\end{align}
Implementing differentiation by parts to the first equation in \eqref{reducedeq} we get 
\begin{multline*}
\partial_tf_k=\sum\limits_{k_1+k_2=k}\frac{\partial_t(e^{-3i kk_1k_2t}f_{k_1}f_{k_2})}{k_1k_2}-\sum\limits_{k_1+k_2=k}\frac{e^{-3ia kk_1k_2t}\partial_t(f_{k_1}f_{k_2})}{k_1k_2}\\+\beta k \sum\limits_{k_1+k_2=k}^*\frac{\partial_t(e^{-it(a k^3-k_1^3-k_2^3)}g_{k_1}g_{k_2})}{a k^3-k_1^3-k_2^3}-\beta k\sum\limits_{k_1+k_2=k}^*\frac{e^{-it(a k^3-k_1^3-k_2^3)}\partial_t(g_{k_1}g_{k_2})}{a k^3-k_1^3-k_2^3}-2i\beta kg_{r_1k}g_{r_2k}.
\end{multline*}
Note here that $a k^3-k_1^3-k_2^3=-3k(k_1-r_1k)(k_1-r_2k)$ where $r_1$ and $r_2$ are the roots of the quadratic equation $3x^2-3x+(1-a)$. So the resonant term corresponding to the second sum of the first equation in \eqref{reducedeq} come up when $r_1k\in \mathbb{Z}$, in which case we would have $r_1,r_2\in \mathbb{Q}$. There is no contribution from $k=0$ solution to the resonant term owing to the mean zero assumption on $u_0$. Following the arguments of \cite{titi}, \cite{erdogan} and using the first line of \eqref{reducedeq}, we have
\begin{multline*}
\sum\limits_{k_1+k_2=k}\frac{e^{-3ia kk_1k_2t}\partial_t(f_{k_1}f_{k_2})}{k_1k_2}=-6ia\underset{\substack{k_1+k_2+k_3=k \\ (k_1+k_2)(k_2+k_3)(k_3+k_1)\neq 0}}\sum \frac{e^{-3ia(k_1+k_2)(k_2+k_3)(k_3+k_1)t}}{k_1}f_{k_1}f_{k_2}f_{k_3}\\-2i\beta \underset{\substack{k_1+k_2+k_3=k}}\sum \frac{e^{-it(a k^3-a k_1^3-k_2^3-k_3^3)}}{k_1}f_{k_1}g_{k_2}g_{k_3}+\frac{6ia}{k}|f_k|^2f_k.
\end{multline*}
Likewise using the second line of \eqref{reducedeq}, the fourth sum in $\partial_tf_k$ can be written as
\begin{align*}
-2i\beta \sum\limits_{k_1+k_2+k_3=k}^* \frac{k_2e^{-it(a k^3-a k_1^3-k_2^3-k_3^3)}}{(k_1+k_2-r_1k)(k_1+k_2-r_2k)}f_{k_1}g_{k_2}g_{k_3}.
\end{align*}
As for the second equation in \eqref{reducedeq}, again we use differentiation by parts to get 
\begin{multline*}
\partial_tg_k=3\sum\limits_{k_1+k_2=k}^*\frac{k_2\partial_t(e^{-it(k^3-a k_1^3-k_2^3)}f_{k_1}g_{k_2})}{k^3-a k_1^3-k_2^3}- 3\sum\limits_{k_1+k_2=k}^*\frac{k_2e^{-it(k^3-a k_1^3-k_2^3)}}{k^3-a k_1^3-k_2^3}(\partial_t f_{k_1}g_{k_2})\\-3ik\big[(1-\tilde{r}_1)f_{\tilde{r}_1k}g_{(1-\tilde{r}_1)k}+(1-\tilde{r}_2)f_{\tilde{r}_2k}g_{(1-\tilde{r}_2)k}\big].
\end{multline*}
Note that in obtaining the resonant term we use the identity $k^3-a k_1^3-k_2^3=(1-a)k_1(k_1-\tilde{r}_1k)(k_1-\tilde{r}_2k)$ where $\tilde{r}_j=1/r_j$, $j=1,2$. By the mean zero assumption on $u_0$, the only contribution comes just when $\tilde{r}_1k,\ \tilde{r}_2k \in \mathbb{Z}$ in which case we need to have $\tilde{r}_1, \tilde{r}_2 \in \mathbb{Q}$. Using \eqref{reducedeq}, we rewrite the second sum above as follows
\begin{multline*}
3\sum\limits_{k_1+k_2=k}^*\frac{e^{-it(k^3-a k_1^3-k_2^3)}k_2}{k^3-a k_1^3-k_2^3}(\partial_t f_{k_1}g_{k_2})=-9ia \sum\limits_{k_1+k_2+k_3=k}^*\frac{e^{-it(k^3-a k_1^3-a k_2^3-k_3^3)}(k_1+k_2)k_3}{k^3-a (k_1+k_2)^3-k_3^3}f_{k_1}f_{k_2}g_{k_3}\\-3i\beta \sum\limits_{k_1+k_2+k_3=k}^*\frac{e^{-it(k^3-k_1^3- k_2^3-k_3^3)}(k_1+k_2)k_3}{k^3-a (k_1+k_2)^3-k_3^3}g_{k_1}g_{k_2}g_{k_3}\\-9i \sum\limits_{k_1+k_2+k_3=k}^*\frac{e^{-it(k^3-a k_1^3-a k_2^3-k_3^3)}(k_2+k_3)k_3}{k^3-a k_1^3-(k_2+k_3)^3}f_{k_1}f_{k_2}g_{k_3}.
\end{multline*} Bringing all the terms together and reinstating the $u$ and $v$ variables yield the assertion.     
\end{proof}
 We integrate the system in Proposition \ref{base} from $0$ to $t$ to get
 \begin{multline}\label{estimatefourierhir} 
 \begin{cases}
 u_k(t)-e^{ia k^3t}u_k(0)=-B_1(u,u)_k(t)-B_2(v,v)_k(t)+e^{ia k^3t}\big[B_1(u,u)_k(0)+B_2(v,v)_k(0)\big]\\ \hspace{4cm}+\displaystyle\int_0^te^{ia k^3(t-s)}\big[R_1(u,v,v)_k(s)+R_2(u,u,u)_k(s)+R_3(u,v,v)_k(s)\\ \hspace{10cm}+\rho_1(u,u)_k(s)+\rho_2(v,v)_k(s)\big]ds \\ v_k(t)-e^{ik^3t}v_k(0)=-B_3(u,v)_k(t)+e^{ik^3t}B_3(u,v)_k(0) + \displaystyle\int_0^te^{ik^3(t-s)}\big[R_4(u,u,v)_k(s)\\ \hspace{6cm} +\frac{\beta}{3a}R_4(v,v,v)_k(s)+R_5(u,u,v)_k(s)+\rho_3(u,v)_k(s)\big]ds
\end{cases}
\end{multline}
In the following we give the proofs of a priori estimates for $\rho_j$ and $B_j$, $j=1,2,3$.

\begin{proposition}\label{hirprop01}
 For $s_1-s\leq 2s+1$, we have $$\norm{\rho_1(u,v)}_{H^{s_1}_x}\lesssim \norm{u}_{H^s_x}^2\norm{v}_{H^s_x}.$$
 For $s_1-s\leq s-1$,
 \begin{align*}
   \norm{\rho_j(u,v)}_{H^{s_1}_x}\lesssim \norm{u}_{H^s_x}\norm{v}_{H^s_x},\,\,j=2,3.  
 \end{align*}
 Also for $s_1-s\leq 1$,
 \begin{align*}
   \norm{B_1(u,v)}_{H^{s_1}_x}\lesssim \norm{u}_{H^s_x}\norm{v}_{H^s_x}. 
 \end{align*}
\end{proposition}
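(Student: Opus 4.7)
The plan is to estimate each of the four terms by direct Fourier-side calculations, reducing every claim to a single inequality between exponents of $\langle k\rangle$. The three $\rho_j$ are ``pointwise-in-frequency'' terms (each output Fourier coefficient at $k$ is a product of specific input Fourier coefficients evaluated at $k$ or at a rational multiple of $k$), so each is handled by one elementary H\"older-type estimate; the key point is to make the non-symmetric choice of using $\ell^\infty$ control on one input factor and retaining $\ell^2$ summability on the other, since the symmetric choice would give strictly worse thresholds. The term $B_1$, by contrast, is a bilinear convolution and I would handle it on the physical side via a Banach algebra argument.

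For $\rho_1(u,v)_k = -\tfrac{6ia}{k}|u_k|^2 v_k$, I would write
\[
\langle k\rangle^{2s_1}|\rho_1(u,v)_k|^2 \lesssim \langle k\rangle^{2s_1-2-6s}\bigl(\langle k\rangle^{4s}|u_k|^4\bigr)\bigl(\langle k\rangle^{2s}|v_k|^2\bigr),
\]
absorb the first bracketed factor into $\|u\|_{H^s}^4$ via the pointwise bound $\langle k\rangle^{4s}|u_k|^4 \leq \|u\|_{H^s}^4$, sum the second factor over $k$ to produce $\|v\|_{H^s}^2$, and require the prefactor $\langle k\rangle^{2s_1-2-6s}$ to be bounded in $k$. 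The latter is precisely the stated condition $s_1 - s \leq 2s+1$.

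For $\rho_2$ and $\rho_3$, each summand has the shape $C\,k\,u_{\alpha k}\,v_{(1-\alpha)k}$ with $\alpha\in\{r_1,r_2,\tilde r_1,\tilde r_2\}\cap\mathbb{Q}$ (the contribution is zero otherwise), so only the sparse set of $k$ with $\alpha k\in\mathbb{Z}$ contributes. The plan is to apply the pointwise estimate $|u_{\alpha k}|^2\lesssim\langle k\rangle^{-2s}\|u\|_{H^s}^2$ to the first factor and keep the $\ell^2$ summability on the second. Using $\langle k\rangle\approx\langle(1-\alpha)k\rangle$ and factoring $\langle k\rangle^{2s_1+2-2s}=\langle k\rangle^{2s_1+2-4s}\langle k\rangle^{2s}$, the $H^{s_1}$ norm squared of each summand is dominated by
\[
\|u\|_{H^s}^2\sum_{k:\,\alpha k\in\mathbb{Z}}\langle k\rangle^{2s_1+2-4s}\,\langle(1-\alpha)k\rangle^{2s}|v_{(1-\alpha)k}|^2,
\]
which is bounded by $\|u\|_{H^s}^2\|v\|_{H^s}^2$ exactly when $\langle k\rangle^{2s_1+2-4s}$ is uniformly bounded, i.e.\ when $s_1 - s\leq s-1$.

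For $B_1(u,v)_k = -\sum_{k_1+k_2=k} u_{k_1}v_{k_2}/(k_1k_2)$, the approach is different: I would recognise the identity $B_1(u,v) = (\partial_x^{-1}P_{\neq 0}u)(\partial_x^{-1}P_{\neq 0}v)$, where $P_{\neq 0}$ projects out the zero Fourier mode so that $\partial_x^{-1}$ is well defined. Since the ambient hypothesis $s>\tfrac12$ puts the relevant range of $s_1$ above $1/2$ (with monotonicity of $H^{s_1}$ handling any $s_1\leq s$), $H^{s_1}(\mathbb{T})$ is a Banach algebra, whence
\[
\|B_1(u,v)\|_{H^{s_1}}\lesssim \|\partial_x^{-1}P_{\neq 0}u\|_{H^{s_1}}\|\partial_x^{-1}P_{\neq 0}v\|_{H^{s_1}}\lesssim \|u\|_{H^{s_1-1}}\|v\|_{H^{s_1-1}}\leq \|u\|_{H^s}\|v\|_{H^s}
\]
as soon as $s_1 - s\leq 1$. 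No step is deep; the only real care is in the exponent bookkeeping for $\rho_1,\rho_2,\rho_3$, where the asymmetric $\ell^\infty$--$\ell^2$ split (rather than a symmetric H\"older on both input factors) is what delivers precisely the sharp thresholds stated.
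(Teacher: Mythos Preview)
Your proof is correct and, for the $\rho_j$ terms, uses exactly the same $\ell^\infty$--$\ell^2$ split as the paper (which writes out only $\rho_2$ explicitly and cites \cite{erdogan} for $\rho_1$ and $B_1$). The one genuine difference is your treatment of $B_1$: you recognise $B_1(u,v)=(\partial_x^{-1}P_{\neq 0}u)(\partial_x^{-1}P_{\neq 0}v)$ and invoke the Banach algebra property of $H^{s_1}(\mathbb{T})$ for $s_1>\tfrac12$, whereas the paper defers to the direct Fourier-side argument of \cite{erdogan}. Your route is clean and arguably more conceptual, but it relies on the ambient hypothesis $s>\tfrac12$ (to ensure, after your monotonicity reduction, that the relevant $s_1$ lies above $\tfrac12$); the Fourier-side argument in \cite{erdogan} does not need this and covers lower regularity. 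In the setting of this paper that distinction is immaterial, so either approach suffices.
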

\begin{proof}
The proofs of the first and third inequalities were given in \cite{erdogan}. As for the other terms, for $s_1-s\leq s-1$, we have
\begin{multline*}
\norm{\rho_2(u,v)}_{H^{s_1}_x}\lesssim \norm{\langle k\rangle^{s_1-2s+1} \langle r_1k\rangle^su_{r_1k} \langle r_2k\rangle^sv_{r_2k}}_{\ell_k^2}\\\lesssim \norm{\langle k\rangle^su_k}_{\ell_k^{\infty}}\norm{\langle k\rangle^sv_k}_{\ell_k^2}\lesssim \norm{u}_{H^s_x}\norm{v}_{H^s_x}, 
\end{multline*}
where the last inequality is due to $\ell^2\hookrightarrow \ell^{\infty}$. $\rho_3$ estimate follows by the same argument as well. \end{proof}

\begin{proposition}\label{hirprop02}
 For $s>\frac{1}{2}$ and $s_1-s<\min\{1,s+2-\mu \}$, we have $$\norm{B_2(u,v)}_{H^{s_1}_x}\lesssim \norm{u}_{H^s_x}\norm{v}_{H^s_x}.$$
  When $a=\frac{3p(p-q)+q^2}{q^2}$, $(p,q)\in \mathbb{Z}\times\mathbb{Z}$ with $\frac{q}{2}<p<q$, we replace the above requirement by $s_1-s\leq 1$.
 \end{proposition}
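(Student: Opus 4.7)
The plan is to reformulate $B_2$ using the algebraic identity $ak^3-k_1^3-k_2^3=-3k(k_1-r_1k)(k_1-r_2k)$, which cancels the outer factor of $k$ in the definition of $B_2$ and yields
$$B_2(u,v)_k=\frac{\beta}{3}\sum_{k_1+k_2=k}^{*}\frac{u_{k_1}v_{k_2}}{(k_1-r_1k)(k_1-r_2k)}.$$
Setting $a_{k_1}:=\langle k_1\rangle^{s}|u_{k_1}|$ and $b_{k_2}:=\langle k_2\rangle^{s}|v_{k_2}|$ (so that $\norm{a}_{\ell^{2}}=\norm{u}_{H^s}$ and $\norm{b}_{\ell^{2}}=\norm{v}_{H^s}$), Cauchy--Schwarz applied in the variable $k_1$ on each slice $k_1+k_2=k$ gives
$$\langle k\rangle^{2s_1}|B_2(u,v)_k|^2\;\lesssim\;\Biggl(\sum_{k_1+k_2=k}^{*}\frac{\langle k\rangle^{2s_1}}{\langle k_1\rangle^{2s}\langle k_2\rangle^{2s}(k_1-r_1k)^2(k_1-r_2k)^2}\Biggr)\sum_{k_1+k_2=k}a_{k_1}^{2}b_{k_2}^{2}.$$
Summing over $k$ and using $\sum_{k}\sum_{k_1+k_2=k}a_{k_1}^{2}b_{k_2}^{2}=\norm{u}_{H^s}^{2}\norm{v}_{H^s}^{2}$, the proposition reduces to the kernel bound
$$\sup_{k\in\mathbb{Z}\setminus\{0\}}\langle k\rangle^{2s_1}\sum_{k_1+k_2=k}^{*}\frac{1}{\langle k_1\rangle^{2s}\langle k_2\rangle^{2s}(k_1-r_1k)^2(k_1-r_2k)^2}\;<\;\infty.$$

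With $L:=|k|$, I would decompose the inner sum into three regimes. In the low-frequency regime $\min(|k_1|,|k_2|)\ll L$, both factors $|k_1-r_jk|$ are $\sim L$, and summation in $k_1$ converges by $s>\tfrac12$, contributing $\lesssim L^{2s_1-2s-4}$. In the high-frequency regime $|k_1|\gg L$ (so $|k_2|\sim|k_1|$), $|k_1-r_jk|\sim|k_1|$, and the tail sum is $\lesssim L^{2s_1-4s-3}$. The principal regime is the diagonal one, $|k_1|,|k_2|\sim L$, which I would further decompose by proximity of $k_1$ to the resonant points $r_1k$ and $r_2k$. Terms with $k_1$ at integer distance $d\ge 1$ from the nearest resonance are controlled by $\sum_{d\ge 1}d^{-2}=O(1)$ (the companion factor remaining of size $L$), yielding a contribution $\lesssim L^{2s_1-4s-2}$. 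The only delicate term is $k_1$ equal to the integer nearest to $r_jk$; here the companion factor is still $\sim L$ (because $|r_1-r_2||k|\sim L$ separates the two resonances, so only one can be near-critical at a time), while the near-resonant factor is controlled by the inequality \eqref{resonanceineq} combined with the identity \eqref{muequalities}:
$$|k_1-r_jk|\;\geq\;K|k|^{-(\mu(\rho_a)-1+\epsilon)}.$$
This produces a near-resonant contribution of order $L^{2s_1-4s-2+2(\mu(\rho_a)-1+\epsilon)}$, which is uniformly bounded precisely under $s_1-s<s+2-\mu(\rho_a)$. Collecting all cases yields the claimed hypothesis $s_1-s<\min\{1,s+2-\mu(\rho_a)\}$.

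For the exceptional rational case $a=\frac{3p(p-q)+q^2}{q^2}$ with $\tfrac{q}{2}<p<q$, the coefficients $r_1,r_2$ lie in $\mathbb{Q}$ with denominator $q$, so the cases $r_jk\in\mathbb{Z}$ are genuinely absent from the $^{*}$-sum (these contributions are handled separately by the resonant term $\rho_2$ in Proposition \ref{base}). On every surviving term the uniform bound $|k_1-r_jk|\geq 1/q$ holds, which is effectively $\mu(\rho_a)=1$ in the above analysis and leaves the cap $s_1-s\leq 1$.

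The main obstacle is the near-resonant diagonal term: the bound is driven entirely by \eqref{resonanceineq}, and careful bookkeeping is needed to verify that neighboring integers---at unit distances from the nearest one---do not accumulate to spoil the bound, and that the two candidate resonances $k_1\approx r_1k$ and $k_1\approx r_2k$ cannot both be near-critical simultaneously. The latter fact relies on the separation $|r_1-r_2||k|\sim L$ and is precisely what prevents the Diophantine exponent from entering quadratically in the final estimate.
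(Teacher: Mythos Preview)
Your proof is correct and follows a route parallel to, but organized differently from, the paper's. The paper works directly with the bilinear form: after the symmetry reduction $|k_1|\gtrsim|k_2|$, it treats the far-from-resonance region $|k_1-r_jk|\geq\tfrac13$ by a Young--H\"older convolution estimate (this is what produces the cap $s_1-s\leq 1$ in the paper's bookkeeping), and the near-resonance region $|k_1-r_jk|<\tfrac13$ as a single-term contribution handled by H\"older and $\ell^2\hookrightarrow\ell^4$. You instead apply Cauchy--Schwarz in $k_1$ at the outset to reduce everything to the supremum of a weighted kernel sum, and then split that kernel by a low/high/diagonal frequency trichotomy, isolating the nearest-integer-to-$r_jk$ term inside the diagonal block. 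Both arguments hinge on the same Diophantine input \eqref{resonanceineq} for the near-resonant piece and give the identical constraint $s_1-s<s+2-\mu(\rho_a)$ there.

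One remark on your summary line ``Collecting all cases yields the claimed hypothesis $s_1-s<\min\{1,s+2-\mu(\rho_a)\}$'': in fact your off-resonance bounds are \emph{sharper} than the paper's---your low-frequency, high-frequency, and diagonal-far regimes give exponents $2s_1-2s-4$, $2s_1-4s-3$, $2s_1-4s-2$, none of which forces $s_1-s\leq 1$. So your method actually proves the estimate under the weaker hypothesis $s_1-s<\min\{s+1,\,s+2-\mu(\rho_a)\}$ (and $s_1-s\leq s+1$ in the rational case), which of course contains the stated one. This is a harmless overstatement, not a gap; the Cauchy--Schwarz reduction simply squeezes a bit more out of the non-resonant terms than the paper's Young--H\"older step does.
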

 \begin{proof}
 It is enough solely to consider the case $|k_1|\gtrsim |k_2|$ by the symmetry. 
  \\{\bf Case A.} $|k_1-r_1k|\geq\frac{1}{3}$, $|k_1-r_2k|\geq \frac{1}{3}$\\
 In this region,
 $$|ak^3-k_1^3-k_2^3|=|3k(k_1-r_1k)(k_1-r_2k)|\geq |k|\max\{|k_1-r_1k|,|k_1-r_2k\}\gtrsim (r_1-r_2)|k|^2.$$
 Accordingly, since $|k_1|\gtrsim |k|$, we have the  estimate \begin{align*}
  \norm{B_2(u,v)}_{H^{s_1}_x}\lesssim \norm{\sum_{k_1+k_2=k}\langle k \rangle^{s_1-1}|u_{k_1}||v_{k_2}|} _{l^2_k}\lesssim \norm{\langle k \rangle^{s_1-s-1}\Big(|u_k|\langle k\rangle^s*|v_k|\frac{\langle k \rangle^s}{\langle k \rangle^s}\Big)}_{l^2_k}  
 \end{align*}
 Assuming that $s_1-s\leq 1$ and using Young's and H\"{o}lder's inequalities successively, the last norm above is majorized by
 \begin{align*}
     \norm{\langle k\rangle^su_k}_{l^2_k}\norm{\langle k\rangle^sv_k\langle k\rangle^{-s}}_{l^1_k}\lesssim \norm{u}_{H^s}\norm{\langle k\rangle^su_k}_{l^2_k}\norm{\langle k\rangle^{-s}}_{l^2_k}\lesssim \norm{u}_{H^s}\norm{v}_{H^s}.
 \end{align*}
 {\bf Case B.} $|k_1-r_1k|<\frac{1}{3}$ or $|k_1-r_2k|< \frac{1}{3}$\\
 We consider the case $|k_1-r_1k|<\frac{1}{3}$ only, the other one is similar. Then as $r_1+r_2=1$, $k_1\simeq r_1k$ and $k_2\simeq r_2k$. This means that the sum under consideration consists of a single term of order $\approx k$. We next make use of the estimate due to irrationality exponent of $r_1$:
 $$|k_1-r_1k|=|k|\Big|r_1-\frac{k_1}{k}\Big|\geq |k|\frac{K(r_1,\epsilon)}{|k|^{\mu(r_1)+\epsilon}}$$
 for any $\epsilon>0$. This allows us to estimate the multiplier in the definition of $B_2$ as follows:
 \begin{multline*}
   |ak^3-k_1^3-k_2^3|=3|k||k_1-r_1k||k_1-r_2k|\geq 3K(r_1,\epsilon)|k|^{2-\mu(r_1)-\epsilon}\big[(r_1-r_2)|k|-1/3\big]\\ \gtrsim |k|^{3-\mu(r_1)-\epsilon}  
 \end{multline*}
 Therefore, using H\"{o}lder's inequality and the embedding $\ell^2\hookrightarrow \ell^4$, we obtain that
 \begin{align*}
 \norm{B_2(u,v)}_{H^{s_1}_x}&\lesssim \norm{\langle k \rangle^{(s_1+\mu(r_1)-2+\epsilon)/2}u_k}_{\ell_k^4}\norm{\langle k \rangle^{(s_1+\mu(r_1)-2+\epsilon)/2}v_k}_{\ell_k^4}\\&\lesssim \norm{\langle k \rangle^{(s_1-2s+\mu(r_1)-2+\epsilon)/2}\langle k \rangle^{s}u_k}_{\ell_k^2}\norm{\langle k \rangle^{(s_1-2s+\mu(r_1)-2+\epsilon)/2}\langle k \rangle^{s}v_k}_{\ell_k^2}\\&\lesssim \norm{u}_{H^s_x}\norm{v}_{H^s_x}  
 \end{align*}
where the last inequality stems from the assumption that $s_1-s<s+2-\mu(r_1)$. 
\end{proof}

\begin{proposition}\label{hirprop03}
 Assume that $u\in \dot{H}^s$. For $s>\frac{1}{2}$ and $s_1-s<\min\{1,s+2-\mu \}$, we have $$\norm{B_3(u,v)}_{H^{s_1}_x}\lesssim \norm{u}_{H^s_x}\norm{v}_{H^s_x}.$$
 When $a=\frac{3p(p-q)+q^2}{q^2}$, $(p,q)\in \mathbb{Z}\times\mathbb{Z}$ with $\frac{q}{2}<p<q$, we replace the above requirement by $s_1-s\leq 1$.
 \end{proposition}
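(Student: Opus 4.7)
The plan is to adapt the proof of Proposition~\ref{hirprop02} to the multiplier
\[
m(k,k_1,k_2) \;=\; \frac{k_2}{(1-a)\,k_1\,(k_1-\tilde r_1 k)(k_1-\tilde r_2 k)},
\]
obtained from the factorization $k^3-ak_1^3-k_2^3=(1-a)k_1(k_1-\tilde r_1 k)(k_1-\tilde r_2 k)$. Because $u\in\dot H^s$ forces $k_1\neq 0$, the extra factor $1/k_1$ that distinguishes this multiplier from the one in $B_2$ is well defined; the strategy is to compensate for it either by the $k_2$ in the numerator or, in the remaining regime, by the summability of $|k_1|^{-2}\langle k_1\rangle^{-2s}$. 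I would split the sum over $k_1+k_2=k$ into Case A, where $|k_1-\tilde r_j k|\geq 1/3$ for both $j=1,2$, and Case B, where one of these factors is $<1/3$.

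In Case A, the identity $|k_1-\tilde r_1 k|+|k_1-\tilde r_2 k|\geq|\tilde r_1-\tilde r_2||k|$ gives $|k_1-\tilde r_1 k|\,|k_1-\tilde r_2 k|\gtrsim|k|$, so $|m|\lesssim|k_2|/(|k_1||k|)$. If $|k_1|\gtrsim|k_2|$ then $|k_1|\gtrsim|k|$ and $|m|\lesssim 1/|k|$; here the Young/H\"older scheme in the proof of Proposition~\ref{hirprop02} applies verbatim and produces the constraint $s_1-s\leq 1$. If instead $|k_2|\gtrsim|k_1|$ with $|k_1|\ll|k|\approx|k_2|$, then since $\tilde r_j\in(1,\infty)$ each factor $|k_1-\tilde r_j k|$ is comparable to $|k|$ and the bound improves to $|m|\lesssim 1/(|k_1||k|)$. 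Young's inequality $\ell^1_{k_1}\ast\ell^2_{k_2}\to\ell^2_k$, together with the estimate $\norm{u_{k_1}/|k_1|}_{\ell^1_{k_1}}\lesssim \norm{u}_{H^s}$ obtained by Cauchy--Schwarz against $(\sum_{k_1\neq 0}|k_1|^{-2}\langle k_1\rangle^{-2s})^{1/2}$ (finite for $s>1/2$), again yields $s_1-s\leq 1$. The intermediate regime $|k_1|\approx|k_2|\approx|k|$ falls into the first subcase.

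In Case B, $k_1$ is pinned within $1/3$ of $\tilde r_j k$ for some $j$; since $\tilde r_1,\tilde r_2>1$ for $a\in(1/4,1)$, this forces $|k_1|\approx|k_2|\approx|k|$ and restricts the sum to at most one term per $k$. When $\tilde r_j\notin\mathbb Q$, the inequality \eqref{resonanceineq} combined with \eqref{muequalities} gives $|k_1-\tilde r_j k|\geq K|k|^{1-\mu(\tilde r_1)-\epsilon}$ while the remaining factor is $\gtrsim|k|$, so $|m|\lesssim|k|^{\mu(\tilde r_1)-2+\epsilon}$. Distributing the weight $\langle k\rangle^{s_1+\mu(\tilde r_1)-2+\epsilon}$ symmetrically between $u$ and $v$ and applying Cauchy--Schwarz in $\ell^2=\ell^4\cdot\ell^4$ followed by $\ell^2\hookrightarrow\ell^4$, exactly as in Case B of Proposition~\ref{hirprop02}, reduces the bound to $s_1-s<s+2-\mu(\tilde r_1)$. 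In the rational case $a=\frac{3p(p-q)+q^2}{q^2}$, writing $\tilde r_j=n_j/m_j$ gives $|k_1-\tilde r_j k|\geq 1/m_j$ whenever it is nonzero, so Case B collapses into Case A and only the bound $s_1-s\leq 1$ remains.

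The main obstacle I foresee is the Case A subcase $|k_1|\ll|k|\approx|k_2|$: a naive bound $|m|\lesssim 1/|k_1|$ offers no decay in $|k|$ and is therefore insufficient for smoothing. The key observation is that in this very regime both $|k_1-\tilde r_j k|$ are of order $|k|$, which upgrades $|m|$ to $1/(|k_1||k|)$; the residual $1/|k_1|$ is then absorbed via Cauchy--Schwarz using the $\ell^2$-summability of $|k_1|^{-1}\langle k_1\rangle^{-s}$ guaranteed by the mean-zero hypothesis on $u$.
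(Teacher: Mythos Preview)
Your proof is correct and follows the same overall strategy as the paper: factorize the denominator $k^3-ak_1^3-k_2^3=(1-a)k_1(k_1-\tilde r_1 k)(k_1-\tilde r_2 k)$, separate the non-resonant from the near-resonant contribution, and invoke the irrationality exponent in the latter. The organizational difference is that the paper's primary split is $|k_1|\geq|k|$ versus $|k_1|<|k|$, with the near-resonant subcase $|k_1-\tilde r_j k|<\delta$ appearing only under the first branch (since $\tilde r_j>1$, it cannot occur when $|k_1|<|k|$); your primary split is on the size of $|k_1-\tilde r_j k|$ instead. The one substantive difference is how the regime $|k_1|<|k|$ is handled: you keep the factor $1/|k_1|$ and absorb it by Cauchy--Schwarz into $\norm{u_{k_1}/|k_1|}_{\ell^1}$, whereas the paper writes $k_1=\eta k$ with $|\eta|<1$ and bounds the full cubic by $|k^3-ak_1^3-k_2^3|=|k_1||k|^2|(1-a)\eta^2-3\eta+3|\gtrsim|k|^2$ directly (the quadratic, with roots $\tilde r_1,\tilde r_2>1$, is bounded below on $|\eta|\leq 1$), giving $|m|\lesssim 1/|k|$ and feeding into the same Young/H\"older step. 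The paper's route is a touch shorter here, but both yield $s_1-s\leq 1$ and the resonant Case~B is handled identically.
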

\begin{proof}
Since there is no symmetry in this case, we consider the two cases:
\\{\bf Case A.} $|k_1|\geq |k|$\\
In this region, $|k_1|\gtrsim |k_2|$, so it suffices to show that
\begin{align}\label{1+2}
\norm{\sum^*_{k_1+k_2=k}\frac{\langle k \rangle^{s_1}u_{k_1}v_{k_2}}{(k_1-\widetilde{r}_1k)(k_1-\widetilde{r}_2k)}}_{\ell_k^2} \lesssim \norm{u}_{H^s_x}\norm{v}_{H^s_x}.  
\end{align}
{\bf Case A.1.} $|k_1-\widetilde{r}_1k|\geq \delta$, $|k_1-\widetilde{r}_2k|\geq \delta$\\
Notice that $$|(k_1-\widetilde{r}_1k)(k_1-\widetilde{r}_2k)|\geq \delta\max\{|k_1-\widetilde{r}_1k|,|k_1-\widetilde{r}_2k|\}\geq \delta(\widetilde{r}_2-\widetilde{r}_1)|k|\gtrsim |k|$$ which yields the estimate:
\begin{align*}
 \text{LHS of}\,\eqref{1+2}\lesssim \norm{\sum_{k_1+k_2=k}\langle k\rangle^{s_1-1}|u_{k_1}||v_{k_2}|} _{l^2_k}   
\end{align*}
but this has already been handled in the proof of the previous proposition, thus the estimate in \eqref{1+2} holds when $s_1-s\leq 1$.
\\{\bf Case A.2.} $|k_1-\widetilde{r}_1k|< \delta$ or $|k_1-\widetilde{r}_2k|< \delta$\\
We consider the first case $|k_1-\widetilde{r}_1k|< \delta$. Second case is analogous. In this case, we have $k_1\simeq \widetilde{r}_1k$ and $k_2=k-k_1\simeq(1-\widetilde{r}_1)k$, and hence $|k|\approx|k_1|\approx|k_2|$. As a result, the values of $k_1$ and $k_2$ in the sum are dependent on $k$. Therefore, the bound
$$|(k_1-\widetilde{r}_1k)(k_1-\widetilde{r}_2k)|\gtrsim |k|^{1-\mu(\widetilde{r}_1)+\epsilon}|k_1-\widetilde{r}_2k|\gtrsim |k|^{2-\mu(\widetilde{r}_1)-\epsilon}$$ implies that
\begin{multline*}
\text{LHS of}\,\eqref{1+2}\lesssim \norm{\langle k \rangle^{s_1-2+\mu(\widetilde{r}_1)+\epsilon}u_{k_1}v_{k_2}}_{\ell_k^2}\\\lesssim \norm{\langle k \rangle^{(s_1-2+\mu(\widetilde{r}_1)+\epsilon)/2}u_{k}}_{\ell_k^4}\norm{\langle k \rangle^{(s_1-2+\mu(\widetilde{r}_1)+\epsilon)/2}v_{k}}_{\ell_k^4}\lesssim \norm{u}_{H^s_x}\norm{v}_{H^s_x}
\end{multline*}
provided that $s_1-s<s+2-\mu(\widetilde{r}_1)$.
\\{\bf Case B.} $|k_1|<|k|$\\
In this region $|k_2|\lesssim |k|$. Thus \begin{align} \label{B3}
  \norm{B_3(u,v)}_{H^{s_1}_x}\lesssim \norm{\sum^*_{k_1+k_2=k}\frac{\langle k \rangle^{s_1+1} u_{k_1}v_{k_2}}{k^3-ak_1^3-k_2^3}} _{l^2_k}.  
 \end{align}
 By the mean zero presumption on $u$, $k_1\neq 0$, thus we may write $k_1=\eta k$ for some $|k|^{-1}\leq |\eta|<1$. It follows that \begin{align*}
|k^3-ak_1^3-k_2^3|=|\eta k^3||(1-a)\eta^2+3-3\eta|\geq |k|^2 |(1-a)\eta^2+3-3\eta|\gtrsim |k|^2.    
 \end{align*}
 Then the right side of \eqref{B3} is bounded by
 \begin{align*}
  \norm{\sum_{k_1+k_2=k}\langle k\rangle^{s_1-1}|u_{k_1}||v_{k_2}|} _{l^2_k}\lesssim \norm{u}_{H^s_x}\norm{v}_{H^s_x}  
 \end{align*}
  as long as $s_1-s\leq 1$.
\end{proof}
Writing the equations in \eqref{estimatefourierhir} in the space side and then using the estimates in Propositions \ref{hirprop01}--\ref{hirprop03}, we arrive at
\begin{multline}\label{estimatehirafter1}
\norm{u(t)-e^{-at\partial_x^3}u_0}_{H^{s_1}}\lesssim \norm{u_0}_{H^s}^2+\norm{v_0}_{H^s}^2+\norm{u}_{H^s}^2+\norm{v}_{H^s}^2+\int_0^t\norm{u(r)}_{H^s}^2+\norm{v(r)}_{H^s}^2\text{d}r\\+\norm{\int_0^te^{-a(t-r)\partial_x^3}\big[R_1(u,v,v)(r)+R_2(u,u,u)(r)+R_3(u,v,v)(r)\big]\text{d}r}_{H^{s_1}}  
\end{multline}
and
\begin{multline}\label{estimatehirafter2}
\norm{v(t)-e^{-t\partial_x^3}v_0}_{H^{s_1}}\lesssim \norm{u_0}_{H^s}\norm{v_0}_{H^s}+\norm{u}_{H^s}\norm{v}_{H^s}+\int_0^t\norm{u(r)}_{H^s}\norm{v(r)}_{H^s}\text{d}r\\+\norm{\int_0^te^{-(t-r)\partial_x^3}\big[R_4(u,u,v)(r)+\frac{\beta}{3a}R_4(v,v,v)(r)+R_5(u,u,v)(r)\big]\text{d}r}_{H^{s_1}}  
\end{multline}
Let $\delta$ be the local existence time coming from the local existence theory for the Hirota-Satsuma system. Let $\psi_{\delta}(t)=\psi(t/\delta)$ where $\psi$ is a compactly supported function supported on $[-2,2]$ and $\psi=1$ on $[-1,1]$. For $t\in[-\delta,\delta]$, to estimate the $H^{s_1}$ norms of the integral parts in \eqref{estimatehirafter1}, \eqref{estimatehirafter2}, we need the following standard lemma, see \cite{ginibre}.  
\begin{lemma}\label{standardlemmahir}
For $\frac{1}{2}<b\leq 1$, and $\alpha\neq 0$ \begin{align*}
\norm{\psi_{\delta}(t)\int_0^te^{-\alpha\partial_x^3(t-r)}F(r)\,\text{d}r}_{X_{\alpha}^{s,b}} \lesssim \norm{F}_{X_{\alpha,\delta}^{s,b-1}}.   
\end{align*} 
\end{lemma}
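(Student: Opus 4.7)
The plan is to reduce the operator estimate to a one-variable scalar estimate in time by passing to the interaction representation. Set $\widetilde F(t,x)=e^{\alpha\partial_x^3 t}F(t,x)$; by definition of the Bourgain norm we have
\[
\|F\|_{X^{s,b-1}_\alpha}\approx \bigl\|\langle k\rangle^s\,(\mathcal F_x\widetilde F)(k,\cdot)\bigr\|_{\ell^2_k H^{b-1}_t(\mathbb{R})}.
\]
A direct computation gives
\[
\psi_\delta(t)\int_0^t e^{-\alpha\partial_x^3(t-r)}F(r)\,dr=e^{-\alpha\partial_x^3 t}\,\psi_\delta(t)\int_0^t\widetilde F(r)\,dr,
\]
so that Plancherel in $x$ reduces the claim to the frequency-frozen scalar estimate
\[
\Bigl\|\psi_\delta(t)\int_0^t g(r)\,dr\Bigr\|_{H^b_t(\mathbb{R})}\lesssim \|g\|_{H^{b-1}_t(\mathbb{R})},
\]
uniformly in $g\in H^{b-1}(\mathbb{R})$.

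I would then attack the scalar inequality by writing $g(r)=\int e^{ir\tau}\widehat g(\tau)\,d\tau$, so that
\[
\int_0^t g(r)\,dr=\int\frac{e^{it\tau}-1}{i\tau}\,\widehat g(\tau)\,d\tau.
\]
To dispose of the apparent singularity of $1/\tau$ at the origin, I split the $\tau$-integral at $|\tau|\sim 1/\delta$. For $|\tau|\le 1/\delta$ I expand
\[
\frac{e^{it\tau}-1}{i\tau}=t\sum_{n\ge 0}\frac{(it\tau)^n}{(n+1)!},
\]
and estimate each term by pairing $\psi_\delta(t)t^{n+1}\in H^b_t$ against $\tau^n\widehat g(\tau)$ by Cauchy--Schwarz; the resulting geometric series converges because $|\tau|\delta\le 1$. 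For $|\tau|>1/\delta$ I bound $|e^{it\tau}-1|\le 2$ and use $1/|\tau|\lesssim \delta^{1-b}|\tau|^{b-1}$, which combines with the $H^{b-1}$-weight on $\widehat g$ to close the estimate.

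The conditions $\tfrac12<b\le 1$ enter exactly here: $b>\tfrac12$ is required so that $\psi_\delta\in H^b_t$ with a usable bound and so that $H^b_t\hookrightarrow L^\infty_t$ makes pointwise evaluation at $t=0$ meaningful, while $b\le 1$ is what makes the comparison $1/|\tau|\lesssim |\tau|^{b-1}$ valid in the high-frequency region. The main obstacle will be matching the powers of $\delta$ across the two regions so that the final constant is $\delta$-independent; this is standard bookkeeping but is where the restricted-norm formulation $X^{s,b-1}_{\alpha,\delta}$ pays off, since one may optimize over extensions of $F$ agreeing with it on $[-\delta,\delta]$ before applying the estimate. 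The remaining details are routine and are carried out in Ginibre's survey \cite{ginibre}.
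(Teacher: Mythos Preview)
The paper does not supply its own proof of this lemma; it simply records the statement as standard and refers to \cite{ginibre}. Your sketch---reduce to a scalar $H^b_t$ estimate via the interaction representation, write $\int_0^t g=\int(e^{it\tau}-1)/(i\tau)\,\widehat g(\tau)\,d\tau$, then split at $|\tau|\sim 1/\delta$ and Taylor-expand on the low-frequency piece---is exactly the standard argument one finds in that reference (and in the analogous treatments by Kenig--Ponce--Vega and Tao), so your proposal is correct and aligned with the paper's intended source. One bibliographic quibble: \cite{ginibre} here is the Ginibre--Tsutsumi--Velo paper on the Zakharov system rather than a survey.
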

Therefore by the Lemma \ref{standardlemmahir} and the embedding $X_{\alpha,\delta}^{s_1,b}\hookrightarrow L^{\infty}_{t\in[-\delta,\delta]}H^{s_1}_x$ for $b>\frac{1}{2}$, $\alpha\neq 0$, we have
\begin{multline}\label{Restimatehir1}
\norm{\int_0^te^{-a(t-r)\partial_x^3}\big[R_1(u,v,v)(r)+R_2(u,u,u)(r)+R_3(u,v,v)(r)\big]\text{d}r}_{L^{\infty}_{t\in[-\delta,\delta]}{H^{s_1}_x}}\\ \lesssim \norm{\psi_{\delta}(t)\int_0^te^{-a(t-r)\partial_x^3}\big[R_1(u,v,v)(r)+R_2(u,u,u)(r)+R_3(u,v,v)(r)\big]\text{d}r}_{X^{s_1,b}_a}\\ \lesssim \norm{R_1(u,v,v)}_{X^{s_1,b-1}_{a,\delta}}+\norm{R_2(u,u,u)}_{X^{s_1,b-1}_{a,\delta}}+\norm{R_3(u,v,v)}_{X^{s_1,b-1}_{a,\delta}}
\end{multline}
and similarly
\begin{multline}\label{Restimatehir2}
\norm{\int_0^te^{-(t-r)\partial_x^3}\big[R_4(u,u,v)(r)+\frac{\beta}{3a}R_4(v,v,v)(r)+R_5(u,u,v)(r)\big]\text{d}r}_{L^{\infty}_{t\in[-\delta,\delta]}{H^{s_1}_x}}\\ \lesssim \norm{R_4(u,u,v)}_{X^{s_1,b-1}_{1,\delta}}+\norm{R_4(v,v,v)}_{X^{s_1,b-1}_{1,\delta}}+\norm{R_5(u,u,v)}_{X^{s_1,b-1}_{1,\delta}}.\end{multline}
The following estimates for $R_j$, $j=1,2,3,4,5$ are necessary so as to close the argument. Their proofs will be given later on.
\begin{proposition}\label{hirprop1}
  Assume that $u\in \dot{H}^s$. For $s>\frac{1}{2}$, $b-\frac{1}{2}>0$ sufficiently small and $s_1-s<\min\{1, s-\frac{1}{2}, s+2-\mu, 2s+1-\mu\}$, we have 
  \begin{align}\label{R_1}
   \norm{R_1(u,v,w)}_{X_a^{s_1,b-1}}\lesssim \norm{u}_{X_a^{s,1/2}}\norm{v}_{X_1^{s,1/2}}\norm{w}_{X_1^{s,1/2}}.   
 \end{align} When $a=\frac{3p(p-q)+q^2}{q^2}$, $(p,q)\in \mathbb{Z}\times\mathbb{Z}$ with $\frac{q}{2}<p<q$, we replace the above requirement by $s_1-s\leq \min\{1,s-\frac{1}{2}\}$.
 \end{proposition}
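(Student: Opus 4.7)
The plan is to prove the trilinear $X^{s,b}$ estimate for $R_1$ by duality followed by a modulation-then-denominator case analysis. Throughout, $\mu$ abbreviates $\mu(\rho_a)$ and $b=\tfrac12+$. Dualizing the $X^{s_1,b-1}_a$ norm against a test sequence $c_k(\tau)$ with $\|c\|_{\ell^2_k L^2_\tau}=1$ and normalizing $F_{k_1}(\tau_1)=\langle k_1\rangle^s\langle\tau_1-ak_1^3\rangle^{1/2}\widehat u(k_1,\tau_1)$ and analogous $G,H$ with the $\tau-k^3$ weights (so that $F,G,H$ are unit in $\ell^2 L^2$), the estimate reduces to bounding, by a constant,
\begin{equation*}
I:=\sum^*_{k_1+k_2+k_3=k}\int\frac{|k_2|\,\langle k\rangle^{s_1-s}\,c_k(\tau)F_{k_1}(\tau_1)G_{k_2}(\tau_2)H_{k_3}(\tau_3)}{|M|\,\langle k_1\rangle^s\langle k_2\rangle^s\langle k_3\rangle^s\,\langle\sigma\rangle^{1-b}\langle\sigma_1\rangle^{1/2}\langle\sigma_2\rangle^{1/2}\langle\sigma_3\rangle^{1/2}}
\end{equation*}
with $M=(k_1+k_2-r_1k)(k_1+k_2-r_2k)$, $\sigma=\tau-ak^3$, $\sigma_1=\tau_1-ak_1^3$, $\sigma_j=\tau_j-k_j^3$ for $j=2,3$, and $\tau=\tau_1+\tau_2+\tau_3$.

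The modulation identity $\sigma-\sigma_1-\sigma_2-\sigma_3=-\Omega$ with $\Omega:=ak^3-ak_1^3-k_2^3-k_3^3$ forces $\max(\langle\sigma\rangle,\langle\sigma_i\rangle)\gtrsim\langle\Omega\rangle$. I split $I$ into four subcases by which of the four modulations is dominant; in each subcase one extracts a factor $\langle\Omega\rangle^{b-1}$ from the dominant weight, and uses the remaining three $L^2$-weights to execute the $\tau$-convolution via Cauchy--Schwarz and Plancherel, reducing matters to a purely spatial weighted sum in $k,k_1,k_2,k_3$.

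Next, rewriting $M=(r_1k-k_3)(r_2k-k_3)$, I further split into the \emph{generic region} $|k_3-r_ik|\gtrsim 1$ for both $i$, where $|M|\gtrsim|k|^{2}$ and an adaptation of the arguments used for $B_2,B_3$ in Propositions~\ref{hirprop02}--\ref{hirprop03} produces the constraints $s_1-s\leq 1$, $s-\tfrac12$, and $s+2-\mu$; and the \emph{Diophantine region} $k_3\simeq r_ik$, where $k_3$ (and hence $k_1+k_2$) is pinned to $k$ up to $O(1)$ and inequality \eqref{resonanceineq} gives $|M|\gtrsim|k|^{2-\mu-\epsilon}$. The heart of the argument --- and the source of the new constraint $s_1-s<2s+1-\mu$ --- is the fully degenerate sub-case of the Diophantine region where the trilinear form mimics the resonant $\rho_1$-type structure of Proposition~\ref{hirprop01}: the $|k_2|$ factor in the numerator cannot be tamed by $\langle k_3\rangle\sim\langle k\rangle$, and tracking the weights through to the $\ell^2_k$-summation replaces the $2s+1$ threshold of the $\rho_1$ estimate by $2s+1-\mu$, the loss $|k|^{\mu}$ being exactly what the Diophantine bound forfeits. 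This is the main obstacle, since the standard trilinear technology of \cite{erdogan} does not directly address this configuration because $M$ depends only on $k$ and $k_3$, so all decay in $k_1,k_2$ must come from the Sobolev weights together with $s>\tfrac12$. Finally, in the rational case $a=\tfrac{3p(p-q)+q^2}{q^2}$ one has $r_i\in\mathbb{Q}$ and effectively $\mu=1$, so the Diophantine-driven constraints become inactive; however, a genuinely resonant sub-sum corresponding to $r_ik\in\mathbb Z$ must be separated out and bounded by hand exactly as for $\rho_2$ in Proposition~\ref{hirprop01}, which yields the stated weaker threshold $s_1-s\leq\min\{1,s-\tfrac12\}$.
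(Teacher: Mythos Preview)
Your modulation--split framework is a legitimate alternative, but the paper takes a shorter route and your sketch contains a specific error that matters.

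\textbf{What the paper actually does.} There is no split by dominant modulation. After setting up the form \eqref{aternative} the paper applies Cauchy--Schwarz in the convolution variables and Young's inequality directly, reducing everything to the finiteness of a single supremum $\sup_{k,\tau}\int\sum M^2$. The $\tau_1,\tau_2$ integrals are then evaluated via the calculus lemma (Lemma~\ref{cal.lem}), which automatically produces the weight $\langle\Omega\rangle^{-(2-2b)}$ and removes the $\tau$-dependence. This lands on the spatial supremum \eqref{mainsup} in one step, instead of four parallel arguments (one per dominant $\sigma_i$) that you would have to carry out and whose non-output-dominant cases you do not address.

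\textbf{The concrete gap.} Your claim that $|M|\gtrsim|k|^{2}$ in the ``generic region $|k_3-r_ik|\gtrsim 1$ for both $i$'' is false: from $|k_3-r_1k|+|k_3-r_2k|\geq(r_1-r_2)|k|$ and both factors $\gtrsim 1$ one only obtains $|M|\gtrsim|k|$. The paper must therefore split this region further: Case~B.3 (both factors $\geq\delta|k|$), where indeed $|M|\gtrsim|k|^2$ and one gets $s_1-s\leq 1$; and the intermediate Case~B.2 ($\delta\leq|n-r_1k|<\delta|k|$), where only $|M|\gtrsim|k|$ holds and a further sub-split on $|k_1|$ and $|n-k_1|$ is needed. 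It is precisely the intermediate Case~B.2.1 that produces the constraint $s_1-s\leq s-\tfrac12$, not the fully generic region. Likewise, the constraint $s_1-s<s+2-\mu$ does \emph{not} come from the generic region as you state, but from the Diophantine sub-case B.1.1 (where $|k_1|<\delta|k|$), while $2s+1-\mu$ arises in Cases~A.3 and~B.1.3. The full argument needs a three-level split on the size of $|n-r_jk|$ (namely $<\delta$, between $\delta$ and $\delta|k|$, and $\geq\delta|k|$), and within each of those a further split on $|k_1|$, $|n-k_1|$, $|n-k|$, with repeated lower bounds on $|\Omega|$ obtained by writing the near-resonant variables as $\eta k$ with $|\eta|$ small. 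Your two-region decomposition is too coarse to recover all four thresholds correctly.
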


\begin{proposition}\label{hirprop2}
  Assume that $u\in \dot{H}^s$. For $s>\frac{1}{2}$, $b-\frac{1}{2}>0$ sufficiently small and $s_1-s\leq 1-$, we have 
  \begin{align*}
   \norm{R_2(u,v,w)}_{X_a^{s_1,b-1}}\lesssim \norm{u}_{X_a^{s,1/2}}\norm{v}_{X_a^{s,1/2}}\norm{w}_{X_a^{s,1/2}}.   
 \end{align*} 
 \end{proposition}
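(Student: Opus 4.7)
The plan is duality in $X_a^{s,b}$ combined with a case analysis driven by the resonance identity, in the spirit of the KdV trilinear estimate obtained from the normal-form reduction in \cite{erdogan}. After dualizing and substituting $f_j(k_j,\tau_j)=\langle k_j\rangle^s\langle\tau_j-ak_j^3\rangle^{1/2}|\widehat{\cdot}(k_j,\tau_j)|$, the inequality is equivalent to
\begin{align*}
\sum^*_{k_1+k_2+k_3=k}\int m(k,\vec k,\tau,\vec\tau)\,|c(k,\tau)|\prod_{j=1}^{3}f_j(k_j,\tau_j)\,\text{d}\vec\tau\;\lesssim\;\|c\|_{\ell^2 L^2}\prod_j\|f_j\|_{\ell^2 L^2},
\end{align*}
where
\begin{align*}
m=\frac{\langle k\rangle^{s_1}}{|k_1|\,\langle k_1\rangle^s\langle k_2\rangle^s\langle k_3\rangle^s\,\langle\tau-ak^3\rangle^{1-b}\prod_{j=1}^{3}\langle\tau_j-ak_j^3\rangle^{1/2}}.
\end{align*}
Since $u\in\dot H^s$ forces $k_1\neq 0$, we may replace $1/|k_1|$ by $1/\langle k_1\rangle$; this is precisely the derivative gain obtained from the differentiation by parts in Proposition \ref{base}.

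The key algebraic fact is the resonance identity
\begin{align*}
(\tau-ak^3)-\sum_{j=1}^{3}(\tau_j-ak_j^3)=-3a(k_1+k_2)(k_2+k_3)(k_3+k_1),
\end{align*}
so that the largest $M$ among $\langle\tau-ak^3\rangle$ and the three $\langle\tau_j-ak_j^3\rangle$ satisfies $M\gtrsim|(k_1+k_2)(k_2+k_3)(k_3+k_1)|$. Because $(k_1+k_2)+(k_2+k_3)+(k_3+k_1)=2k$ and each pair-sum is nonzero by the cutoff $*$, the right-hand side is $\gtrsim|k|$. I would then partition the sum according to (a) which of the four modulations realizes $M$ and (b) which of $|k_1|,|k_2|,|k_3|$ is largest, using $\langle k\rangle\leq 3\max_j\langle k_j\rangle$ to distribute the weight $\langle k\rangle^{s_1}$. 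In every branch three ingredients close the bound: the gain $M^{-(1-b)}$ from $\langle\tau-ak^3\rangle^{1-b}$ when $M$ sits on the output (or a borrowed $M^{-\epsilon}$ from an input modulation when $M$ sits on an input), the extra $\langle k_1\rangle^{-1}$ gain intrinsic to $R_2$, and a Cauchy–Schwarz / $L^4$-Strichartz estimate in the remaining two frequency-time variables, which is standard for Bourgain spaces on $\mathbb{T}$ adapted to $\partial_x^3$.

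The main obstacle is the branch where $M=\langle\tau-ak^3\rangle$ and the configuration makes two of $|k_1+k_2|,|k_2+k_3|,|k_3+k_1|$ as small as $O(1)$, so that only the lower bound $M\gtrsim|k|$ (not $|k|^3$) is available. Then the output-modulation gain is just $|k|^{b-1}$, and the multiplier reduces, after placing $\langle k\rangle^{s_1}$ on $k_{\max}$ and using the $\langle k_1\rangle^{-1}$ gain, to something like $\langle k_{\max}\rangle^{s_1-s-1+(1-b)}$ times $\ell^2$-summable factors in $k_{\mathrm{med}}$ and $k_{\mathrm{min}}$ (where $s>\tfrac12$ is used to sum the $\langle k_j\rangle^{-s}$ factors). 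Closing requires $s_1-s$ strictly below $1$, which is precisely the hypothesis $s_1-s\leq 1-$, and simultaneously consumes the $\epsilon$ in $b-\tfrac12>0$ and in $s_1-s\leq 1-$. All the other branches gain either a full power of $|k|$ from one input modulation or additional powers from a truly cubic resonance $|(k_1+k_2)(k_2+k_3)(k_3+k_1)|\gg|k|$, leaving the argument essentially identical to the cubic-normal-form estimate for KdV in \cite{erdogan}.
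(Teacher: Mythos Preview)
Your approach differs from the paper's, and the sketch contains a misidentification of the binding case together with a numerological slip. The paper does not dualize or invoke $L^4$--Strichartz at all; following exactly the template of Proposition~\ref{hirprop1}, it applies Cauchy--Schwarz in $(k_1,k_2,\tau_1,\tau_2)$ to reduce matters to the finiteness of
\[
\sup_k\langle k\rangle^{2s_1}\sum_{k_1,n}\frac{\langle k_1\rangle^{-2s-2}\langle n-k_1\rangle^{-2s}\langle n-k\rangle^{-2s}}{\langle n\rangle^{2-2b}\langle k_1-k\rangle^{2-2b}\langle n-k-k_1\rangle^{2-2b}},
\]
where the three denominator factors come from $\langle(k_1+k_2)(k_2+k_3)(k_3+k_1)\rangle^{2-2b}$ (each pair sum is a nonzero integer, so the product factors). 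Only two cases are needed: if $|k_1|\gtrsim|k|$ the extra $\langle k_1\rangle^{-2}$ becomes $\langle k\rangle^{-2}$ and one resonance factor $\gtrsim|k|$ already closes with $s_1-s\le 2-b$; if $|k_1|\ll|k|$ then $|k_1-k|\gtrsim|k|$ and moreover $\langle n\rangle\langle n-k-k_1\rangle\gtrsim\langle k+k_1\rangle\gtrsim\langle k\rangle$, so two factors of $\langle k\rangle^{2-2b}$ are available and the constraint is $s_1-s\le 2-2b$, i.e.\ $s_1-s\le 1-$ for $b=\tfrac12+$. No Strichartz, no partition by the maximal modulation.

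The case you single out as the ``main obstacle''---two pair sums $O(1)$---is in fact a \emph{favorable} region: if $|k_1+k_2|,|k_2+k_3|=O(1)$ then necessarily $|k_1|\approx|k_2|\approx|k_3|\approx|k|$, so both the $\langle k_1\rangle^{-1}$ gain and all three $\langle k_j\rangle^{-s}$ weights are $\approx|k|^{-1}$ and $|k|^{-s}$, giving a much better exponent than $s_1-s\le1-$. The genuine worst case is $|k_1|\ll|k|$, where the $1/k_1$ factor is useless; there the resonance is still $\gtrsim|k|^2$ (not merely $|k|$), and it is precisely this $|k|^{2(1-b)}$ gain that produces the threshold $s_1-s\le 2-2b$. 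Your displayed exponent $\langle k_{\max}\rangle^{s_1-s-1+(1-b)}$ also does not match either computation (with $M\gtrsim|k|$ on the output and $k_1=k_{\max}$ one gets $s_1-s-2+b$). Your plan can be made to work, but the case structure and arithmetic need to be redone; the paper's supremum argument is both shorter and avoids these pitfalls.
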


\begin{proposition}\label{hirprop3}
  Assume that $u\in \dot{H}^s$. For $s>\frac{1}{2}$, $b-\frac{1}{2}>0$ sufficiently small and $s_1-s\leq 1$, we have 
\begin{align*}
\norm{R_3(u,v,w)}_{X_a^{s_1,b-1}}\lesssim \norm{u}_{X_a^{s,1/2}}\norm{v}_{X_1^{s,1/2}}\norm{w}_{X_1^{s,1/2}}.\end{align*} 
 \end{proposition}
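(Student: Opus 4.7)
The plan is to adapt the standard $X^{s,b}$ methodology used for Propositions~\ref{hirprop1} and~\ref{hirprop2} to the specific phase and factor structure of $R_3$. Reading off the proof of Proposition~\ref{base}, one has $R_3(u,v,w)_k=2i\beta\sum_{k_1+k_2+k_3=k,\,k_1\neq 0}\tfrac{u_{k_1}v_{k_2}w_{k_3}}{k_1}$, so the phase relative to the ambient $X_a^{s_1,b-1}$ space is $ak^3-ak_1^3-k_2^3-k_3^3$, reflecting that the $u$-factor has dispersion $ak^3$ while the $v,w$-factors have dispersion $k^3$. The first step is to dualize the $X_a^{s_1,b-1}$-norm and apply Plancherel, recasting the claim as the boundedness by $1$ of the quadrilinear form
\[
J=\sum_{k_1+k_2+k_3=k}\int M\,f_1(k_1,\sigma_1)f_2(k_2,\sigma_2)f_3(k_3,\sigma_3)\overline{g(k,\sigma)}\,d\sigma_1\,d\sigma_2\,d\sigma,
\]
uniformly over unit-norm $f_i,g\in\ell_k^2L_\sigma^2$, where the modulations are constrained by $\sigma_1+\sigma_2+\sigma_3-\sigma=ak^3-ak_1^3-k_2^3-k_3^3$ and
\[
M=\frac{\langle k\rangle^{s_1}}{|k_1|\,\langle k_1\rangle^s\langle k_2\rangle^s\langle k_3\rangle^s\,\langle\sigma\rangle^{1-b}\langle\sigma_1\rangle^{1/2}\langle\sigma_2\rangle^{1/2}\langle\sigma_3\rangle^{1/2}}.
\]

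Next I would distribute the output weight via $\langle k\rangle^{s_1}\lesssim\max_j\langle k_j\rangle^{s_1}$ and treat the three resulting regimes in turn. When $|k_1|\gtrsim\max(|k_2|,|k_3|)$, the factor $\langle k_1\rangle^{s_1-s-1}|k_1|^{-1}\lesssim 1$ under the hypothesis $s_1-s\leq 1$, and $M$ collapses to $\tfrac{1}{\langle k_2\rangle^s\langle k_3\rangle^s}$ times modulation weights; Cauchy--Schwarz in $(\sigma_1,\sigma_2,\sigma_3)$ together with $\int\langle\sigma\rangle^{2b-2}\,d\sigma<\infty$ (since $b<1$) and $\sum_k\langle k\rangle^{-2s}<\infty$ (since $s>\tfrac12$) close this regime. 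The remaining two regimes, $|k_2|$ dominant and $|k_3|$ dominant, are symmetric: I would discard $|k_1|^{-1}\leq 1$ by the mean-zero assumption on $u$, pair the factors so that an $X_a$-type function meets an $X_1$-type function via Cauchy--Schwarz in $L^2_{t,x}$, and invoke Bourgain's Strichartz bound $\|h\|_{L^4_{t,x}}\lesssim\|h\|_{X^{0,1/3+}_\alpha}$ for $\alpha\in\{a,1\}$. The resulting frequency multiplier is summable precisely when $s_1-s\leq 1$, which matches the stated hypothesis and fixes the admissible range.

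The main obstacle I expect is the high-high-low cancellation sub-regime where $|k_2|\approx|k_3|\gg|k_1|$ yet $|k|\ll|k_2|$, since then $\langle k\rangle^{s_1}\lesssim\max_j\langle k_j\rangle^{s_1}$ is far from sharp and the $L^4$-estimate does not on its own absorb the overhead $\langle k_2\rangle^{s_1-s}$. The remedy is the algebraic lower bound $|ak^3-ak_1^3-k_2^3-k_3^3|\gtrsim|k_2+k_3||k_2|^2$ obtained by factoring the phase, which forces $\max(|\sigma|,|\sigma_j|)\gtrsim |k_2|^2|k_2+k_3|$; a small gain $\langle\sigma_{\max}\rangle^\varepsilon$ extracted from the modulation weights (available because $b>\tfrac12$) then supplies the missing decay. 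Crucially, the multiplier $\tfrac{1}{|k_1|}$ prevents the genuinely resonant configurations that forced Diophantine control in Proposition~\ref{hirprop1}, which is why Proposition~\ref{hirprop3} is stated without any $\mu(\rho_a)$-dependence and the argument works uniformly in $a\in(\tfrac14,1)$.
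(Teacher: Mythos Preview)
Your identification of $R_3(u,v,w)_k=2i\beta\sum_{k_1\neq 0}k_1^{-1}u_{k_1}v_{k_2}w_{k_3}$ and its phase $ak^3-ak_1^3-k_2^3-k_3^3$ is correct, as is your observation that the $1/k_1$ factor is what makes the Diophantine machinery unnecessary here. Your route, however, differs from the paper's and contains a gap.

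The paper does not use Strichartz at all. It applies the same Cauchy--Schwarz sup-reduction as in Proposition~\ref{hirprop1}, reducing matters to the finiteness of
\[
\sup_k\,\langle k\rangle^{2s_1}\sum_{k_1\neq 0,\,n}\frac{\langle k_1\rangle^{-2s-2}\langle n-k_1\rangle^{-2s}\langle n-k\rangle^{-2s}}{\langle ak^3-ak_1^3-(n-k_1)^3-(k-n)^3\rangle^{2-2b}},
\]
then splits according to whether $|k_1|$, $|n-k_1|$, $|n-k|$ are $\lesssim\delta|k|$. The extra $\langle k_1\rangle^{-2}$ disposes of the case $|k_1|\gtrsim|k|$ immediately, and when $|k_1|\ll|k|$ the paper exhibits phase lower bounds of order $|k|^3$, coming from the fact that the leading coefficient is $1-a\neq 0$.

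Your $L^4$ scheme has a gap in exactly this last configuration. The assertion that in the $|k_2|$-dominant regime the $L^4$ argument ``is summable precisely when $s_1-s\leq 1$'' is not right: after $\langle k\rangle^{s_1}\lesssim\langle k_2\rangle^{s_1}$ and discarding $|k_1|^{-1}$, the residual weight $\langle k_2\rangle^{s_1-s}$ on $v$ only allows you to close for $s_1-s\leq s$, which is strictly weaker than $s_1-s\leq 1$ when $\tfrac12<s<1$. You flag the high--high--low interaction $|k_2|\approx|k_3|\gg|k_1|,|k|$ and give a correct phase bound there, but that regime is in fact harmless (the output weight $\langle k\rangle^{s_1}$ is already small). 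The interaction that defeats pure $L^4$ is $|k_1|\ll|k|$ with $|k_2|,|k_3|\approx|k|$: there the phase equals $(a-1)k^3+O\bigl(k^2|k_1|\bigr)$, so $a\neq 1$ forces $|\text{phase}|\gtrsim|k|^3$ and yields a modulation gain of order $\langle k\rangle^{3(1-b)}$, which is what actually recovers $s_1-s\leq 1$. Once you insert this case your approach can be completed, but at that point it is doing the same work as the paper's sup-reduction with more moving parts.
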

 
 \begin{proposition}\label{hirprop4}
  Assume that $u\in \dot{H}^s$. For $s>\frac{1}{2}$, $b-\frac{1}{2}>0$ sufficiently small and $s_1-s<\min\{1, s+2-\mu, 2s+1-\mu\}$, we have 
  $$\norm{R_4(u,u,v)}_{X_1^{s_1,b-1}}\lesssim \norm{u}^2_{X_a^{s,1/2}}\norm{v}_{X_1^{s,1/2}}.$$  When $a=\frac{3p(p-q)+q^2}{q^2}$, $(p,q)\in \mathbb{Z}\times\mathbb{Z}$ with $\frac{q}{2}<p<q$, we replace the above requirement by $s_1-s\leq 1$.
 \end{proposition}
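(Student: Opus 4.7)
The plan is to exploit the algebraic cancellation built into the denominator of $R_4$ and then carry out a trilinear $X^{s,b}$ estimate via duality, closely paralleling the proof of Proposition~\ref{hirprop03}. Setting $m:=k_1+k_2=k-k_3$, the identity
$$ k^3 - am^3 - k_3^3 = (1-a)\,m\,(m-\widetilde{r}_1 k)(m-\widetilde{r}_2 k), $$
which is exactly how $\widetilde{r}_1,\widetilde{r}_2$ in \eqref{constantshir} were introduced, cancels the $(k_1+k_2)=m$ factor in the numerator of $R_4$; this is legal since the starred sum excludes $m=0$. The reduced multiplier is then $M_0 = k_3/[(1-a)(m-\widetilde{r}_1 k)(m-\widetilde{r}_2 k)]$, of the same structural type as the multiplier in $B_3$ but with an extra free frequency.

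By the duality $(X^{s_1,b-1}_1)^* = X^{-s_1,1-b}_1$, bounding $\norm{R_4(u,u,v)}_{X^{s_1,b-1}_1}$ reduces to estimating a tetralinear form in four $\ell^2_k L^2_\tau$ functions of unit norm, weighted by $\langle k\rangle^{s_1}|M_0|/(\langle\sigma\rangle^{1-b}\prod_{j=1}^3\langle k_j\rangle^s\langle\sigma_j\rangle^{1/2})$, where $\sigma=\tau-k^3$, $\sigma_1=\tau_1-ak_1^3$, $\sigma_2=\tau_2-ak_2^3$, $\sigma_3=\tau_3-k_3^3$. Mimicking the case split of Proposition~\ref{hirprop03}, I distinguish (i) $|m|<|k|$, where the factorization immediately yields $|M_0|\lesssim 1/|k|$; (ii) $|m|\geq|k|$ with $|m-\widetilde{r}_j k|\geq\delta$ for both $j$, which also gives $|M_0|\lesssim 1/|k|$; and (iii) the resonant region $|m|\geq|k|$ with $|m-\widetilde{r}_j k|<\delta$ for some $j$, where \eqref{muequalities} and the Diophantine inequality \eqref{resonanceineq} give $|M_0|\lesssim |k|^{\mu(\rho_a)-1+\epsilon}$ and pin $m\simeq \widetilde{r}_j k$ and $k_3\simeq(1-\widetilde{r}_j)k$.

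In the non-resonant regions (i) and (ii) the standard Bourgain $L^4_{t,x}$ Strichartz estimate together with Cauchy--Schwarz close the form under the single constraint $s_1-s\leq 1$. In the resonant region (iii), the free parameters reduce to $k_1$ varying on the line $k_1+k_2=m(k)$; a Cauchy--Schwarz in $k_1$ (which uses $s>\frac{1}{2}$ via the bound $\sum_{k_1}\langle k_1\rangle^{-2s}\langle m-k_1\rangle^{-2s}\lesssim \langle m\rangle^{-2s}$) followed by Cauchy--Schwarz in $k$, combined with a case split using the time resonance $\sigma-\sigma_1-\sigma_2-\sigma_3 = -(k^3-ak_1^3-ak_2^3-k_3^3)$ to transfer the $|k|^{\mu-1+\epsilon}$ loss onto whichever of $\langle\sigma\rangle^{1-b},\langle\sigma_j\rangle^{1/2}$ dominates, should produce the resonant threshold $s_1-s<s+2-\mu(\rho_a)$, and in the sub-region where $|k_1|,|k_2|$ are both comparable to $|k|$ (so that $\langle k\rangle^{s_1}$ must be absorbed by splitting $s$ off each of the two $u$-factors simultaneously), the sharper threshold $s_1-s<2s+1-\mu(\rho_a)$.

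For the rational exceptional case $a=[3p(p-q)+q^2]/q^2$ with $\frac{q}{2}<p<q$, the numbers $\widetilde{r}_j$ lie in $\mathbb{Q}$ and $m=\widetilde{r}_j k$ admits infinitely many integer solutions, so \eqref{resonanceineq} no longer eliminates these frequencies. As in the $\rho_2,\rho_3$ estimates of Proposition~\ref{hirprop01}, this subsum is extracted and bounded directly by the Sobolev embedding $\ell^2\hookrightarrow\ell^\infty$, producing only the constraint $s_1-s\leq 1$ at the expense of requiring $s>1$. The principal technical obstacle will be the irrational resonant region (iii): balancing the Diophantine loss $|k|^{\mu-1+\epsilon}$ against the $X^{s,1/2}$ time weights requires subdividing further according to the relative sizes of $|\sigma|,|\sigma_1|,|\sigma_2|,|\sigma_3|$ and choosing in each subregion between an $L^4$ Strichartz bound and a pure $\ell^2$ convolution estimate so that all three constraints on $s_1-s$ emerge sharply without any slack.
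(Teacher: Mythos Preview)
Your reduction of the multiplier to $M_0 = k_3/[(1-a)(m-\widetilde{r}_1 k)(m-\widetilde{r}_2 k)]$ is correct and matches the paper's starting point, and your treatment of region~(iii) (the truly resonant region $|m-\widetilde{r}_jk|<\delta$) has the right structure. The gap is in region~(ii).

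The claim that $|M_0|\lesssim 1/|k|$ whenever $|m|\geq |k|$ and both $|m-\widetilde{r}_jk|\geq\delta$ is false. Take $m$ in the intermediate range $\delta\leq |m-\widetilde{r}_1k|<\delta|k|$ (so $m\simeq\widetilde{r}_1k$ up to $o(|k|)$). Then $|k_3|=|k-m|\approx(\widetilde{r}_1-1)|k|$ and $|m-\widetilde{r}_2k|\approx(\widetilde{r}_2-\widetilde{r}_1)|k|$, but the remaining factor $|m-\widetilde{r}_1k|$ is only $\geq\delta$, giving $|M_0|\approx |k|/(\delta\cdot|k|)=O(1)$, not $O(1/|k|)$. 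With only $|M_0|\lesssim 1$ the $L^4$ Strichartz/Cauchy--Schwarz argument you propose cannot close at $s_1-s\leq 1$: after absorbing $\langle k_3\rangle^{-s}$ you are left with $\langle k\rangle^{s_1-s}/(\langle k_1\rangle^s\langle k_2\rangle^s)$, and since only one of $|k_1|,|k_2|$ need be $\gtrsim|k|$ this yields at best $s_1-s\leq s$, not $s_1-s\leq 1$, and you have no summability left over for the free $m$ variable ranging over $\approx\delta|k|$ values.

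The paper isolates exactly this intermediate regime as a separate Case~B and treats it by invoking the \emph{time}-resonance factor $\langle k^3-ak_1^3-ak_2^3-k_3^3\rangle^{2-2b}$ (which you reserved for region~(iii) only). It then subdivides further according to whether $|k_1|$ and $|n-k_1|=|k_2|$ are $\gtrsim\delta|k|$ or not; in each subcase an explicit lower bound on $|k^3-ak_1^3-a(n-k_1)^3-(k-n)^3|$ of size $\gtrsim|k|^2|k_1|$ (resp.\ $|k|^2|n-k_1|$) is obtained by writing $n=(\widetilde{r}_1+\eta)k$, and this is what recovers the missing $1/|k|$. So the time-resonance mechanism must be used in the intermediate region as well, not just in the Diophantine region. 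A minor point: in the rational exceptional case the proposition does not impose $s>1$; that extra restriction enters only through $\rho_2,\rho_3$ in Proposition~\ref{hirprop01}, not here.
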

 
\begin{proposition}\label{hirprop4ek}
  For $s>\frac{1}{2}$, $b-\frac{1}{2}>0$ sufficiently small and $s_1-s<\min\{1, s+\frac{5}{2}-\mu, 2s+1-\mu\}$, we have
  $$\norm{R_4(u,v,w)}_{X_1^{s_1,b-1}}\lesssim \norm{u}_{X_1^{s,1/2}}\norm{v}_{X_1^{s,1/2}}\norm{w}_{X_1^{s,1/2}}.$$  When $a=\frac{3p(p-q)+q^2}{q^2}$, $(p,q)\in \mathbb{Z}\times\mathbb{Z}$ with $\frac{q}{2}<p<q$, we replace the above requirement by $s_1-s\leq 1$.
 \end{proposition}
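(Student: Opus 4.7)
The plan is to follow the duality-and-Strichartz strategy of Proposition \ref{hirprop4}, adapted to the fact that all three inputs now live in $X^{s,1/2}_1$ rather than two in $X^{s,1/2}_a$ and one in $X^{s,1/2}_1$. By duality, I reduce the claim to bounding a quadrilinear form against $\phi\in X^{-s_1,1-b}_1$ of unit norm, factor the denominator via the identity $k^3-a(k_1+k_2)^3-k_3^3=(1-a)(k_1+k_2)(k_1+k_2-\widetilde r_1 k)(k_1+k_2-\widetilde r_2 k)$ already used in Proposition \ref{hirprop03}, and split into a non-resonant region where $|k_1+k_2-\widetilde r_j k|\ge\delta$ for both $j=1,2$, and a resonant region where one is $<\delta$.

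In the non-resonant region, the denominator is $\gtrsim |k_1+k_2||k|$ and the multiplier is $\lesssim |k_3|\langle k\rangle^{s_1-1}$. Distributing $\langle k\rangle^{s_1-s}$ across the inputs via $\langle k\rangle\lesssim\max_j\langle k_j\rangle$ and absorbing $|k_3|$ into $w$ reduces this piece to a standard trilinear KdV $X^{s,b}$ estimate on $\mathbb{T}$, which is handled by Bourgain's $L^4$ Strichartz bound and closes under $s_1-s\le 1$.

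The resonant region is the main obstacle and the source of the sharper threshold $s+\tfrac{5}{2}-\mu(\rho_a)$ compared to the $s+2-\mu(\rho_a)$ of Proposition \ref{hirprop4}. On this region $k_1+k_2\simeq\widetilde r_1 k$ (say), so $k_3\simeq(1-\widetilde r_1)k$ and $|k|\approx|k_3|$; by \eqref{resonanceineq} and \eqref{muequalities} one has $|k_1+k_2-\widetilde r_1 k|\gtrsim\langle k\rangle^{1-\mu(\rho_a)-\epsilon}$, so the multiplier is of order $\langle k\rangle^{s_1+\mu(\rho_a)-1+\epsilon}$. The key new ingredient, unavailable in Proposition \ref{hirprop4}, is that since all three inputs share the dispersion symbol $k^3$, the clean identity $k^3-k_1^3-k_2^3-k_3^3=3(k_1+k_2)(k_2+k_3)(k_1+k_3)$ combined with $\tau=\tau_1+\tau_2+\tau_3$ forces the largest of the four modulation weights $\langle\tau-k^3\rangle,\langle\tau_j-k_j^3\rangle$ to be comparable to $|(k_1+k_2)(k_2+k_3)(k_1+k_3)|$; extracting a power from the dominant weight then supplies the extra half-derivative of smoothing that improves the threshold.

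The remaining work is a standard case split on which modulation is largest, placing that factor in $L^\infty$ and the others in $L^2$, then closing with $L^4$ Strichartz and Cauchy-Schwarz as in Propositions \ref{hirprop1}--\ref{hirprop4}. The constraint $2s+1-\mu(\rho_a)$ arises exactly as in Proposition \ref{hirprop4}, when two summation variables lie near rational multiples of $k$ and one must absorb $\langle k\rangle^{2s}$ from two $\langle k_j\rangle^s$ factors rather than one. For the rational case $a=\frac{3p(p-q)+q^2}{q^2}$, the additional exact-resonance contributions are treated by the argument used in the rational branch of Proposition \ref{hirprop03} and close under the stricter condition $s_1-s\le 1$.
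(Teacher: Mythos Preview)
Your identification of the key structural point is correct: the clean factorization $k^3-k_1^3-k_2^3-k_3^3=3(k_1+k_2)(k_2+k_3)(k_1+k_3)$, available here because all three inputs share the dispersion $k^3$, supplies an extra factor $\sim|k|$ in the resonant region (since $k_1+k_2\simeq\widetilde{r}_1k$ there) and accounts for the improvement from $s+2-\mu$ in Proposition~\ref{hirprop4} to $s+\tfrac{5}{2}-\mu$ here. The origin of the constraints $s_1-s\le 1$ and $s_1-s<2s+1-\mu$ is also correctly described.

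Your route, however, differs from the paper's. You propose duality, a case-split on the dominant modulation among $\langle\tau-k^3\rangle,\langle\tau_j-k_j^3\rangle$, and Bourgain's $L^4$ Strichartz bound. The paper instead continues exactly the template of Propositions~\ref{hirprop1}--\ref{hirprop4}: a single Cauchy--Schwarz in all convolution variables reduces the estimate to the finiteness of the supremum \eqref{R5}; Lemma~\ref{cal.lem} then collapses the $\tau$-integrals into the single weight $\langle n(k-k_1)(k+k_1-n)\rangle^{2-2b}$, and the remaining casework is purely on the spatial frequencies $n,k_1$ relative to $\widetilde{r}_jk$. No Strichartz and no modulation case-split are needed; the degenerate set $n(k-k_1)(k+k_1-n)=0$ is dispatched by reduction to Case~A of Proposition~\ref{hirprop1} (this is where $2s+1-\mu$ enters). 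Your approach would also close, but note that your non-resonant sketch is loose: with only $|k_1+k_2-\widetilde{r}_jk|\geq\delta$ (rather than $\geq\delta|k|$) the residual multiplier still carries a factor $|k_3|$, and naive $L^4$ Strichartz cannot absorb it for $\tfrac{1}{2}<s<1$; you would need to invoke the dispersive weight on that intermediate range as well, which the paper's Cauchy--Schwarz reduction delivers automatically.
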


\begin{proposition}\label{hirprop5}
  Assume that $u\in \dot{H}^s$. For $s>\frac{1}{2}$, $b-\frac{1}{2}>0$ sufficiently small and $s_1-s<\min\{1, s-\frac{1}{2}, s+\frac{5}{2}-\mu, 2s+1-\mu\}$, we have 
  $$\norm{R_5(u,u,v)}_{X_1^{s_1,b-1}}\lesssim \norm{u}^2_{X_a^{s,1/2}}\norm{v}_{X_1^{s,1/2}}.$$  When $a=\frac{3p(p-q)+q^2}{q^2}$, $(p,q)\in \mathbb{Z}\times\mathbb{Z}$ with $\frac{q}{2}<p<q$, we replace the above requirement by $s_1-s\leq\min\{1-, s-\frac{1}{2}\}$.
 \end{proposition}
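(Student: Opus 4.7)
The plan is to dualize $\|R_5(u,u,v)\|_{X_1^{s_1,b-1}}$ against a test function and reduce the estimate to a quadrilinear bound of the form
\begin{align*}
\sum_{\substack{k_1+k_2+k_3=k\\ k_1,k_2\neq 0}}^{*}\!\!\int \mathcal{M}\,F_1 F_2 F_3 H\,d\tau\,d\tau_1\,d\tau_2 \;\lesssim\; \prod_{j=1}^{3}\|F_j\|_{L^2_{k,\tau}}\,\|H\|_{L^2_{k,\tau}},
\end{align*}
where $\tau=\tau_1+\tau_2+\tau_3$, the $F_j=\langle k_j\rangle^{s}\langle\tau_j-\phi_j\rangle^{1/2}\widehat{u}_j$ encode the $X^{s,1/2}$ profiles (with $\phi_j=ak_j^3$ for $j=1,2$ and $\phi_3=k_3^3$), and the kernel is
\begin{align*}
\mathcal{M}=\frac{\langle k\rangle^{s_1}\,|k_3|\,|k-k_1|}{\langle\tau-k^3\rangle^{1-b}\bigl|k^3-ak_1^3-(k_2+k_3)^3\bigr|\,\langle k_1\rangle^{s}\langle k_2\rangle^{s}\langle k_3\rangle^{s}\prod_{j=1}^{3}\langle\tau_j-\phi_j\rangle^{1/2}}.
\end{align*}
Factoring $k^3-ak_1^3-(k_2+k_3)^3=(1-a)k_1(k_1-\tilde{r}_1k)(k_1-\tilde{r}_2k)$ and using the mean-zero assumption $k_1\neq 0$ to keep $|k_1|\geq 1$, the denominator is controlled exactly as in the proof of Proposition~\ref{hirprop3} for $B_3$ and Proposition~\ref{hirprop4} for $R_4$.

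I would carry out the analysis by splitting into a \emph{non-resonant} region, where $|k_1-\tilde{r}_jk|\geq\delta$ for both $j=1,2$, and a \emph{resonant} region. In the non-resonant case the denominator is bounded below by $|k_1|\,|k|^2$, so the numerator $|k_3|\,|k-k_1|\lesssim |k|^2$ is compensated up to a gain of one derivative; the bound closes using the standard $L^4$-Strichartz estimate on the torus together with the modulation factors, yielding the contribution compatible with $s_1-s\leq 1$. In the resonant region the Diophantine inequality \eqref{resonanceineq} gives $|k_1-\tilde{r}_jk|\gtrsim|k|^{1-\mu-\epsilon}$, so that the denominator is at least $|k_1|\,|k|^{2-\mu-\epsilon}$ and $k_1$ is pinned to within $O(|k|^{1-\mu-\epsilon})$ of $\tilde{r}_jk$, leaving effectively a two-parameter sum in $(k,k_3)$. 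After a further case split on which of the modulations $\sigma,\sigma_1,\sigma_2,\sigma_3$ dominates (exploiting $\sigma-\sigma_1-\sigma_2-\sigma_3=-(k^3-ak_1^3-ak_2^3-k_3^3)$ to trade a modulation gain for derivative decay when needed), Cauchy--Schwarz combined with the $H^s\hookrightarrow L^\infty$ embedding ($s>\tfrac12$) closes the estimate under the constraints $s_1-s<s+\tfrac{5}{2}-\mu$ and $s_1-s<2s+1-\mu$. The additional restriction $s_1-s<s-\tfrac12$, which is absent from Propositions~\ref{hirprop4}--\ref{hirprop4ek}, enters in the subcase $|k_3|\sim|k|$, where the surplus factor $|k_3|$ in the $R_5$ numerator must be absorbed by the $\ell^\infty$ embedding on the $v$-factor.

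The rational case $a=\frac{3p(p-q)+q^2}{q^2}$ has $\tilde{r}_1,\tilde{r}_2\in\mathbb{Q}$, so the exact resonances $k_1=\tilde{r}_jk$ are integers excluded by the $^{*}$ convention; the strict near-resonant contributions $|k_1-\tilde{r}_jk|\sim 1$ have effective Diophantine exponent $\mu=1$, which tightens the bookkeeping to $s_1-s\leq\min\{1-,\,s-\tfrac12\}$ after the usual $\epsilon$-loss. A direct appeal to Lemma~\ref{standardlemmahir} then converts the dualized estimate into the Bourgain-space inequality on the interval $[-\delta,\delta]$. I expect the principal obstacle to be the resonant sub-region with $k_1\approx\tilde{r}_jk$ and $|k_3|\sim|k|$: here the multiplier is of size $|k|^{\mu-1+\epsilon-s}$ even after absorbing one $H^s$-weight on $u_{k_1}$, and distributing the surplus weight across $F_2,F_3,H$ and the three modulation factors so as to simultaneously respect the Diophantine loss and the $|k_3|$-numerator loss is the most delicate bookkeeping of the proof; this is precisely where the two sharp thresholds $s_1-s<s-\tfrac{1}{2}$ and $s_1-s<s+\tfrac{5}{2}-\mu$ meet.
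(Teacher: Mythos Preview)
Your sketch has a concrete gap in the non-resonant region. From the factorization $(1-a)k_1(k_1-\tilde r_1k)(k_1-\tilde r_2k)$ and the condition $|k_1-\tilde r_jk|\ge\delta$ for both $j$, you only get a lower bound of order $|k_1|\cdot\delta\cdot|k|$, since just \emph{one} of the two linear factors is forced to be $\gtrsim|k|$; the claimed bound $|k_1|\,|k|^2$ requires the stronger hypothesis $|k_1-\tilde r_jk|\ge\delta|k|$. The paper therefore works with a \emph{three}-region decomposition in $|k_1-\tilde r_jk|$ (namely $<\delta$, between $\delta$ and $\delta|k|$, and $\ge\delta|k|$), and it is precisely the middle regime---its Case~B for the off-diagonal piece, subcase $|k_2|<\delta|k|$ so that $|k_3|\approx|k|$---that produces the sharp constraint $s_1-s\le s-\tfrac12$. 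Your two-region split folds this regime into the non-resonant case and the overestimate of the denominator hides the loss; the $L^4$-Strichartz argument you indicate will not close there.

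A second point you pass over is the diagonal set $k_1+k_2=0$ (equivalently $k_3=k$). On that set the total phase $k^3-ak_1^3-ak_2^3-k_3^3$ vanishes identically, so none of the four modulations $\sigma,\sigma_1,\sigma_2,\sigma_3$ can help and your dominant-modulation case split becomes vacuous. The paper singles this piece out as $S_1$ and exploits the $k_1\leftrightarrow-k_1$ symmetry (using $u_{-k_1}=\overline{u_{k_1}}$) to combine the two contributions, which upgrades the denominator from the quadratic $(k_1-\tilde r_1k)(k_1-\tilde r_2k)$ to the quartic $\prod_j(k_1^2-\tilde r_j^2k^2)$; this cancellation is what makes the diagonal contribution close under the stated hypotheses. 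Your plan needs both the finer three-scale split and a separate treatment of the zero-modulation diagonal before the bookkeeping can be carried out.
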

 
 Using \eqref{Restimatehir1} and \eqref{Restimatehir2} together with the Propositions \ref{hirprop1}--\ref{hirprop5} in \eqref{estimatehirafter1} and \eqref{estimatehirafter2}, we have
 \begin{multline*}
\norm{u(t)-e^{-at\partial_x^3}u_0}_{H^{s_1}}+\norm{v(t)-e^{-t\partial_x^3}v_0}_{H^{s_1}}\lesssim \big(\norm{u_0}_{H^s}+\norm{v_0}_{H^s}\big)^2\\+ \big(\norm{u(t)}_{H^s}+\norm{v(t)}_{H^s}\big)^2+\int_0^t\big(\norm{u(r)}_{H^s}+\norm{v(r)}_{H^s}\big)^2\,\text{d}r+\big(\norm{u}_{X^{s,1/2}_{a,\delta}}+\norm{v(t)}_{X^{s,1/2}_{1,\delta}}\big)^3 \end{multline*} 
Next we shall obtain the polynomial growth bound stated in the theorem. To do so fix $t$ large. Let $T(r)=\langle r \rangle^{\beta(s)}$. For $r\leq t$, we have that \begin{align*}
    \norm{u(r)}_{H^s}+\norm{v(r)}_{H^s}\lesssim T(t).
\end{align*}
Therefore, for $\delta\approx T(t)^{-\frac{3}{2}}$ and any $j\leq t/\delta\approx tT(t)^{\frac{3}{2}}$, 
\begin{align*}
\norm{u(j\delta)-e^{-\delta a\partial_x^3}u((j-1)\delta)}_{H^{s_1}}+\norm{v(j\delta)-e^{-\delta\partial_x^3}v((j-1)\delta)}_{H^{s_1}} \lesssim T(t)^3   
\end{align*}
where we have used the local theory bound
\begin{align*}
\norm{u}_{X^{s,1/2}_{a,[(j-1)\delta, j\delta]}}+ \norm{v}_{X^{s,1/2}_{1,[(j-1)\delta, j\delta]}}\lesssim \norm{u((j-1)\delta)}_{H^s}\lesssim T(t).  
\end{align*}
Letting $J=t/\delta\approx tT(t)^{\frac{3}{2}}$ yields that
\begin{align*}
\norm{u(J\delta)-e^{-J\delta a\partial_x^3}u_0 }_{H^{s_1}}&\leq \sum_{j=1}^J\norm{e^{-(J-j)\delta a\partial_x^3}u(j\delta)-e^{-(J-j+1)\delta a\partial_x^3}u((j-1)\delta)}_{H^{s_1}}\\&= \sum_{j=1}^J\norm{u(j\delta)-e^{-\delta a\partial_x^3}u((j-1)\delta)}_{H^{s_1}}\lesssim JT(t)^3\approx tT(t)^{9/2}.
\end{align*}
The similar estimate gives the same bound for $v$ completing the demonstration of the growth bound. The continuity in $H^{s_1}\times H^{s_1}$ follows from the continuity of $u$ and $v$ in $H^s$, the embedding $X^{s,b}_a,X^{s,b}_1\hookrightarrow C_t^0H^s_X$, and the estimates stated above, see \cite{erdogan}.

\section{Proofs of Smoothing Estimates}
\subsection{Proof of Proposition \ref{hirprop1}}
 We start by defining the functions:
\begin{align*}
  &f_1(k,\tau)=\langle k\rangle^s \langle\tau-ak^3 \rangle^{\frac{1}{2}}|\widehat{u}_k(\tau)|,\\& f_2(k,\tau)=\langle k\rangle^s \langle\tau-k^3 \rangle^{\frac{1}{2}}|\widehat{v}_k(\tau)|,\\& f_3(k,\tau)=\langle k\rangle^s \langle\tau-k^3 \rangle^{\frac{1}{2}}|\widehat{w}_k(\tau)|. 
\end{align*}
Therefore using these functions, the convolution structure suggest to prove that
\begin{align}\label{aternative}
    \norm{\int\limits_{\sum\tau_j=\tau}\sum\limits_{\sum k_j=k}^*Mf_1(k_1,\tau_1)f_2(k_2,\tau_2)f_3(k_3,\tau_3)}_{\ell_k^2L^2_{\tau}}^2\lesssim \prod_{j=1}^3 \norm{f_j}^2_{\ell_k^2L^2_{\tau}},
\end{align}
where \begin{align*}
M=\frac{|k_2|\langle k\rangle^{s_1}\langle k_1 \rangle^{-s}\langle k_2 \rangle^{-s}\langle k_3\rangle^{-s}}{|k_1+k_2-r_1k||k_1+k_2-r_2k|\langle \tau-ak^3\rangle^{1-b}\langle \tau_1-ak_1^3\rangle^{1/2}\langle\tau_2-k_2^3 \rangle^{1/2}\langle \tau_3-k_3^3 \rangle^{1/2}}.    
\end{align*}
By the Cauchy-Schwarz inequality in $\tau_1$, $\tau_2$, $k_1$, $k_2$ variables, and the application of Young's inequality, the norm in the left hand side of \eqref{aternative} is estimated by
\begin{align*}
    \sup_{k,\tau}\Big(\int\limits_{\sum\tau_j=\tau}\sum\limits_{\sum k_j=k}^*M^2\Big)\norm{f_1^2*f_2^2*f_3^2}_{\ell_k^1L^1_{\tau}}\lesssim \sup_{k,\tau}\Big(\int\limits_{\sum\tau_j=\tau}\sum\limits_{\sum k_j=k}^*M^2\Big)\prod_{j=1}^3 \norm{f_j}^2_{\ell_k^2L^2_{\tau}}.
\end{align*}
Accordingly it suffices to demonstrate that the supremum above is finite. The implementation of the Lemma \ref{cal.lem} in the $\tau_1$ and $\tau_2$ integrals remove the $\tau$ dependence in the supremum and yields a bound
\begin{align*}
    \sup_k\langle k \rangle^{2s_1}\sum_{k_1,k_2}^*\frac{|k_2|^2\langle k_1 \rangle^{-2s}\langle k_2 \rangle^{-2s}\langle k-k_1-k_2 \rangle^{-2s}}{(k_1+k_2-r_1k)^2(k_1+k_2-r_2k)^2\langle ak^3-ak_1^3-k_2^3-(k-k_1-k_2)^3 \rangle^{2-2b}}.
\end{align*}
By a change of variable $k_2\mapsto n-k_1$, it suffices to estimate
\begin{align}\label{mainsup}
    \sup_k\langle k \rangle^{2s_1}\sum_{k_1,n}^*\frac{\langle k_1 \rangle^{-2s}\langle n-k_1 \rangle^{-2s}\langle n-k \rangle^{-2s}|n-k_1|^2}{(n-r_1k)^2(n-r_2k)^2\langle 
    ak^3-ak_1^3-(n-k_1)^3-(k-n)^3 \rangle^{2-2b}}.
 \end{align}
 \\{\bf Case A.} $k_1=k$\\
 In this case, the supremum in \eqref{mainsup} is replaced  by
 \begin{align*}
   \sup_k\langle k \rangle^{2s_1-2s}\sum_{n}^*\frac{\langle n-k \rangle^{-4s}|n-k|^2}{(n-r_1k)^2(n-r_2k)^2}.
\end{align*}
\\{\bf Case A.1.} $|n-r_1k|\geq \delta |k|$, $|n-r_2k|\geq \delta |k|$\\
Note that $|n|\leq \Big[\frac{r_j+\delta}{\delta}\Big]|n-r_jk|$, $j=1,2$. Therefore, in this region $|n-k|\lesssim |n-r_jk|$, $j=1,2$. Then the supremum is bounded by
\begin{align*}
    \sup_k\langle k \rangle^{2s_1-2s-2}\sum_{n}^*\langle n-k \rangle^{-4s}\lesssim \sup_k\langle k\rangle^{2s_1-2s-2}\lesssim 1,   
\end{align*}
for $s_1-s\leq 1$.
\\{\bf Case A.2.} $\delta\leq|n-r_1k|< \delta |k|$ or $\delta \leq|n-r_2k|<\delta |k|$\\
Assume that $\delta\leq|n-r_1k|< \delta |k|$, the other case is similar. Notice that since $|n-k|<(1-r_1)|k|+\delta|k|$, we have $|n-k|\lesssim |k|$. Also the estimate $|n-r_2k|\gtrsim |k|$ follows from $|n-r_2k|\geq (r_1-r_2)|k|-|n-r_1k|\geq (r_1-r_2)|k|-\delta$. As a result, for small but fixed $\delta>0$, using the Lemma \ref{cal.lem}, the supremum is majorized by
\begin{align*}
   \sup_k\langle k \rangle^{2s_1-2s}\sum_{n}^*\frac{\langle n-k \rangle^{-4s}}{\langle n-r_1k \rangle^2}\lesssim \sup_k\langle k \rangle^{2s_1-2s}\langle (1-r_1)k\rangle^{-2}\lesssim \langle k \rangle^{2s_1-2s-2}\lesssim 1
\end{align*}
provided that $s_1-s\leq 1$.
\\{\bf Case A.3.} $|n-r_1k|< \delta$ or $|n-r_2k|<\delta$\\
Suppose that $|n-r_1k|<\delta$, the other case can be dealt in the same way. We note that $|n-k|\gtrsim |k|$, $|n-r_2k|\gtrsim |k|$ and $|n-r_1k|\gtrsim |k|^{1-\mu(r_1)-\epsilon}$. These estimates imply that the supremum above is bounded by
$\sup_k\langle k\rangle^{2s_1-6s-2+2\mu(r_1)+2\epsilon}\lesssim 1$
whenever $s_1-s<2s+1-\mu(r_1)$.
\\{\bf Case B.} $k_1\neq k$\\
In this case we consider the following cases to show that the supremum \eqref{mainsup} is finite.
\\{\bf Case B.1.} $|n-r_1k|<\delta$ or $|n-r_2k|<\delta$\\
Assume the first case $|n-r_1k|<\delta$, the other case follows from a similar treatment. In this region, $|n-k|\gtrsim |k|$, $|n-r_2k|\gtrsim |k|$. Via these estimates, the resulting bound for \eqref{mainsup} is as follows
\begin{align}\label{sup23}
\sup_k\langle k\rangle^{2s_1-2s-4+2\mu(r_1)+2\epsilon}\underset{\substack{ n\simeq r_1k\\ k_1}}{\sum} \frac{\langle k_1\rangle^{-2s}\langle n-k_1\rangle^{-2s}(n-k_1)^2}{\langle ak^3-ak_1^3-(n-k_1)^3-(k-n)^3\rangle^{2-2b}}.  
\end{align}
\\{\bf Case B.1.1} $|k_1|<\delta |k|$\\
In this region, $(r_1-\delta)|k|-\delta<|n-r_1k|<(r_1+\delta)|k|+\delta$. As $u$ is mean zero, $k_1\neq 0$, hence we may write $k_1=\eta_1k$ for some $|k|^{-1}\leq|\eta_1|<\delta$. Also $n=r_1k+\eta_2$ for some $|\eta_2|<\delta$. Using these we obtain \begin{multline*}
|ak^3-ak_1^3-(n-k_1)^3-(k-n)^3|\\=\Big|(k_1-k)\Big(\eta_1k^2\big((1-a)(1+\eta_1)-3r_1\big)+3\eta_2(2r_1-1-\eta_1)k+3\eta_2^2\Big)\Big|\\\geq |k_1-k|\Big(|k|\big(3r_1-(1-a)(1+\delta)-3\delta(2r_1-1+\delta)\big)-3\delta^2\Big)\gtrsim |k||k_1-k|. 
\end{multline*}
Therefore, \begin{align*}
    \eqref{sup23}\lesssim \sup_k\langle k\rangle^{2s_1-4s-4+2b+2\mu(r_1)+2\epsilon}\sum_{k_1} \langle k_1\rangle^{-2s}\langle k_1-k\rangle^{2b-2}\lesssim \sup_k\langle k\rangle^{2s_1-4s-6+4b+2\mu(r_1)+2\epsilon}
\end{align*}
which is finite so long as $s_1-s<s+2-\mu(r_1)$.
\\{\bf Case B.1.2} $|n-k_1|<\delta |k|$\\
Firstly note that $(r_1-\delta)|k|-\delta<|k_1|<(r_1+\delta)|k|+\delta$. We need to bound: \begin{multline}\label{supso}
    \sup_k\langle k\rangle^{2s_1-4s-2+2\mu(r_1)+2\epsilon}\underset{\substack{ k_1\\n\simeq r_1k }}{\sum} \frac{\langle n-k_1\rangle^{-2s}}{\langle ak^3-ak_1^3-(n-k_1)^3-(k-n)^3\rangle^{2-2b}}
\end{multline} In the case $kk_1<0$, we write $n-k_1=\eta_1k$ and $n=r_1k+\eta_2$ for some $|\eta_1|, |\eta_2|< \delta$ to get
\begin{multline*}
|ak^3-ak_1^3-(n-k_1)^3-(k-n)^3|= |(r_1^3-\eta_1^3)k^3-ak_1^3+3\eta_2(1-r_1)^2k^2-3\eta_2^2(1-r_1)k+\eta_2^3|\\ \geq |(r_1^3-\eta_1^3)k^3-ak_1^3|-3\delta(1-r_1)^2k^2-3\delta^2(1-r_1)|k|-\delta^3\\\geq (r_1^3-\delta^3)|k|^3-3\delta(1-r_1)^2k^2-3\delta^2(1-r_1)|k|-\delta^3 \gtrsim |k|^3 
\end{multline*}
by taking sufficiently small $\delta$. This yields that the supremum is bounded when $s_1-s<s+\frac{5}{2}-\mu(r_1)$:
\begin{multline*}
   \eqref{supso}\lesssim \sup_k\langle k\rangle^{2s_1-4s-8+6b+2\mu(r_1)+2\epsilon}\underset{\substack{ k_1\\n\simeq r_1k }}{\sum} \langle n-k_1\rangle^{-2s}\lesssim \sup_k\langle k\rangle^{2s_1-4s-8+6b+2\mu(r_1)+2\epsilon}\lesssim 1. 
\end{multline*}
Next we consider the case in which $kk_1>0$. By observing that $(n,k,k_1)\mapsto(-n,-k,-k_1)$ is a symmetry for \eqref{supso}, we may assume that $k,k_1>0$. By this assumption and the inequality $|n-r_1k|<\delta$, we must have $n>0$ as well, otherwise we would have $|n-k_1|\geq |n|\simeq r_1|k|$. For this case we just write $n=r_1k+\eta$ for some $|\eta|<\delta$ to obtain
\begin{multline*}
|ak^3-ak_1^3-(n-k_1)^3-(k-n)^3|\\= |k_1-k||(1-a)(k^2+kk_1+k_1^2)+3(r_1k+\eta)^2-(r_1k+\eta)(k_1+k)|\\=|k_1-k||(1-a)k_1^2+(-3r_1+1-a)kk_1+\mathcal{O}(\delta)(k+k_1)+\mathcal{O}(\delta^2)|\gtrsim |k-k_1|k^2 
\end{multline*}
where the last inequality is always valid for $k_1$ satisfying $(r_1-\delta)k-\delta<k_1<(r_1+\delta)k+\delta$ with sufficiently small $\delta$. Since $|k-k_1|\gtrsim 1$,
\begin{multline*}
   \eqref{supso}\lesssim \sup_k\langle k\rangle^{2s_1-4s-2+2\mu(r_1)+2\epsilon}\underset{\substack{ k_1\\n\simeq r_1k }}{\sum} \frac{\langle n-k_1\rangle^{-2s}}{\langle (k-k_1)k^2\rangle^{2-2b}}\\\lesssim \sup_k\langle k\rangle^{2s_1-4s-6+4b+2\mu(r_1)+2\epsilon}\lesssim \sum_{k_1} \frac{\langle k_1-r_1k\rangle^{-2s}}{\langle k_1-k\rangle^{2-2b}}\lesssim \sup_k\langle k\rangle^{2s_1-4s-8+6b+2\mu(r_1)+2\epsilon}\lesssim 1 
\end{multline*}
provided that $s_1-s<s+\frac{5}{2}-\mu(r_1)$.
\\{\bf Case B.1.3} $|n-k_1|\geq\delta |k|$, $|k_1|\geq \delta|k|$\\
Note that $|n-k_1|\leq \Big(\frac{2-r_1}{\delta}+1\Big)|k_1|+\delta$. Since $s>1/2$, we have
\begin{align*}
\eqref{sup23}&\lesssim  \sup_k\langle k\rangle^{2s_1-2s-4+2\mu(r_1)+2\epsilon}\underset{\substack{ n\simeq r_1k\\ k_1}}{\sum} \frac{\langle k_1\rangle^{-2s+1}\langle n-k_1\rangle^{-2s+1}}{\langle ak^3-ak_1^3-(n-k_1)^3-(k-n)^3\rangle^{2-2b}}\\& \lesssim  \sup_k\langle k\rangle^{2s_1-6s-2+2\mu(r_1)+2\epsilon}\underset{\substack{ n\simeq r_1k\\ k_1}}{\sum}\langle ak^3-ak_1^3-(n-k_1)^3-(k-n)^3\rangle^{2b-2}\\&\lesssim\sup_k\langle k\rangle^{2s_1-6s-2+2\mu(r_1)+2\epsilon}\lesssim 1   
\end{align*}
whenever $s_1-s<2s+1-\mu(r_1)$.
\\{\bf Case B.2.} $\delta\leq |n-r_1k|<\delta |k|$ or $\delta\leq |n-r_2k|<\delta |k|$\\
Assume that $\delta\leq |n-r_1k|<\delta |k|$, the other case can be treated in a similar fashion. In this region we note that $|n-r_2k|>(r_1-r_2-\delta)|k|$ which implies $|n-r_2k|\gtrsim |k|$. The other required estimates are $(r_1-\delta)|k|<|n|<(r_1+\delta)|k|$, $|n-k|\gtrsim|k|$. Accordingly we need to bound: 
\begin{align*}
 \eqref{mainsup}\lesssim\sup_k \langle k\rangle^{2s_1-2s-2} \underset{\substack{ k_1\\ |k|/4\leq |n|\leq 2|k|}}{\sum}\frac{\langle k_1 \rangle^{-2s}\langle n-k_1\rangle^{-2s}(n-k_1)^2}{\langle  ak^3-ak_1^3-(n-k_1)^3-(k-n)^3\rangle^{2-2b}} . 
\end{align*}
\\{\bf Case B.2.1.} $|k_1|<\delta |k|$\\
Notice in this case that $|n-k_1|\lesssim |k|$. Hence the supremum above is bounded by
\begin{align*}
    \sup_k\langle k\rangle^{2s_1-2s}\underset{\substack{ k_1\\ |n|\geq|k|/4}}{\sum} \langle k_1\rangle^{-2s}\langle n-k_1\rangle^{-2s}\lesssim \sup_k\langle k\rangle^{2s_1-2s}\sum_{|n|\geq |k|/4} \langle n\rangle^{-2s}\lesssim \sup_k\langle k\rangle^{2s_1-4s+1}\lesssim 1
\end{align*}
provided that $s_1-s\leq s-1/2$.
\\{\bf Case B.2.2.} $|n-k_1|<\delta |k|$\\
The computation for this case is the same as that in the previous case.
\\{\bf Case B.2.3.} $|k_1|\geq \delta |k|$, $|n-k_1|\geq\delta |k|$\\
Here $|n-k_1|\lesssim |k_1|$ which leads to the bound
\begin{multline*}
  \sup_k \langle k\rangle^{2s_1-2s-2} \underset{\substack{ k_1\\ |n|\leq 2|k|}}{\sum}\frac{\langle k_1 \rangle^{-2s+1}\langle n-k_1\rangle^{-2s+1}}{\langle  ak^3-ak_1^3-(n-k_1)^3-(k-n)^3\rangle^{2-2b}}\\\lesssim  \sup_k \langle k\rangle^{2s_1-6s} \underset{\substack{ k_1\\ |n|\leq 2|k|}}{\sum}\langle  ak^3-ak_1^3-(n-k_1)^3-(k-n)^3\rangle^{2b-2}\lesssim\sup_k \langle k\rangle^{2s_1-6s+1} \lesssim 1  
\end{multline*}
for $s_1-s\leq 2s-\frac{1}{2}$.
\\{\bf Case B.3.} $|n-r_1k|\geq \delta |k|$, $|n-r_2k|\geq\delta |k|$\\
In this case, we make use of the inequality $|n-k_1|\leq \Big(\frac{r_1+\delta}{\delta}\Big)|n-r_1k|+|k_1|$ so as to have $|n-k_1|^2\lesssim \langle n-k_1 \rangle\big(|k_1|+|n-r_1k|\big)$.
\\{\bf Case B.3.1.} $|k_1|\geq \delta|k|$, $|n-k_1|\geq \delta|k|$\\
In this region, using the inequality above, the supremum \eqref{mainsup} can be bounded by
\begin{multline*}
\sup_k\langle k\rangle^{2s_1-4s-2}\sum_{k_1,n}\frac{\langle n-k \rangle^{-2s}}{\langle ak^3-ak^3_1-(n-k_1)^3-(k-n)^3 \rangle^{2-2b}}\\ \lesssim \sup_k\langle k\rangle^{2s_1-4s-2}\sum_{n}\langle n-k \rangle^{-2s} \lesssim \sup_k\langle k\rangle^{2s_1-4s-2}\lesssim 1 
\end{multline*}
as long as $s_1-s\leq s+1$.
\\{\bf Case B.3.2.} $|k_1|< \delta|k|$\\
In this case, the inequality $|n-k_1|<\Big(\frac{r_1+2\delta}{\delta}\Big)|n-r_1k|$ gives rise to the bound
\begin{multline*}
\eqref{mainsup}\lesssim \sup_k\langle k\rangle^{2s_1-2}\sum_{k_1,n}\langle k_1 \rangle^{-2s} \langle n-k_1 \rangle^{-2s}\langle n-k\rangle^{-2s} \\ \lesssim \sup_k\langle k\rangle^{2s_1-2}\sum_{n}\langle n \rangle^{-2s} \langle n-k \rangle^{-2s}\lesssim\sup_k\langle k\rangle^{2s_1-2s-2}\lesssim 1
\end{multline*}
for $s_1-s\leq 1$.
\\{\bf Case B.3.3.} $|n-k_1|< \delta|k|$\\
The computation in the preceding case works for this case as well. 

\subsection{Proof of Proposition \ref{hirprop2}}
Following the argument in the proof of Proposition \ref{hirprop1}, we need to show that the supremum 
\begin{align*}
    \sup_k\langle k\rangle^{2s_1}\underset{\substack{ k_1\neq0\\k_2\\(k_1+k_2)(k-k_1)(k-k_2)\neq 0}}{\sum}\frac{\langle k_1\rangle^{-2s}\langle k_2 \rangle^{-2s}\langle k-k_1-k_2\rangle^{-2s}}{|k_1|^2\langle (k-k_1)(k-k_2)(k_1+k_2)\rangle^{2-2b}}
\end{align*}
is finite. By the change of variable $k_2\mapsto n-k_1$, it suffices to show that  
\begin{align}\label{son}
    \sup_k\langle k\rangle^{2s_1}\underset{\substack{ k_1, n}}{\sum}\frac{\langle k_1\rangle^{-2s-2}\langle n-k_1 \rangle^{-2s}\langle n-k\rangle^{-2s}}{\langle n\rangle^{2-2b}\langle k_1-k\rangle^{2-2b}\langle n-k-k_1\rangle^{2-2b}}.
\end{align}
is finite. 
\\{\bf Case A.} $|k_1|\gtrsim |k|$\\
In this region,
\begin{align*}
\eqref{son}\lesssim \sup_k\langle k\rangle^{2s_1-2}\underset{\substack{ k_1, n}}{\sum}\frac{\langle k_1\rangle^{-2s}\langle n-k_1 \rangle^{-2s}\langle n-k\rangle^{-2s}}{\langle k\rangle^{2-2b}} \lesssim \sup_k\langle k\rangle^{2s_1-2s-4+2b}    
\end{align*}
which is finite provided that $s_1-s\leq 2-b$.
\\{\bf Case B.} $|k_1|\ll|k|$\\
In this case, the spremum is finite for $s_1-s\leq 2-2b$:
\begin{multline*}
\eqref{son}\lesssim \sup_k\langle k\rangle^{2s_1-2+2b}\underset{\substack{ k_1, n}}{\sum}\frac{\langle k_1\rangle^{-2s-2}\langle n-k_1 \rangle^{-2s}\langle n-k\rangle^{-2s}}{\langle n\rangle^{2-2b}\langle n-k-k_1\rangle^{2-2b}} \\ \lesssim \sup_k\langle k\rangle^{2s_1-2+2b}\underset{\substack{ k_1, n}}{\sum}\frac{\langle k_1\rangle^{-2s-2}\langle n-k_1 \rangle^{-2s}\langle n-k\rangle^{-2s}}{\langle k+k_1\rangle^{2-2b}}\lesssim  \sup_k\langle k\rangle^{2s_1-2s-4+4b} \lesssim 1.
\end{multline*}

\subsection{Proof of Proposition \ref{hirprop3}}
Proceeding as in the proof of Proposition \ref{hirprop1}, it suffices to show that the supremum
\begin{align*}
     \sup_k \langle k \rangle^{2s_1}\underset{\substack{ k_1\neq 0\\ k_2}}{\sum} \frac{\langle k_1 \rangle^{-2s-2}\langle k_2 \rangle^{-2s}\langle k-k_1-k_2 \rangle^{-2s}}{\langle ak^3-ak_1^3-k_2^3-(k-k_1-k_2)^3\rangle^{2-2b}}.
\end{align*}
is finite, or equivalently, by the change of variable $k_2\mapsto n-k_1$, we shall show that the supremum 
\begin{align}\label{R.3}
     \sup_k \langle k \rangle^{2s_1}\underset{\substack{ k_1\neq 0\\ n}}{\sum}\frac{\langle k_1 \rangle^{-2s-2}\langle n-k_1 \rangle^{-2s}\langle n-k \rangle^{-2s}}{\langle ak^3-ak_1^3-(n-k_1)^3-(k-n)^3\rangle^{2-2b}}.
\end{align}
is finite.
\\{\bf Case A.} $k_1=k$\\
In this case, we have
\begin{align*}
\eqref{R.3}\lesssim \sup_k\langle k\rangle^{2s_1-2s-2}\sum_n\langle n-k \rangle^{-4s} \lesssim \sup_k\langle k\rangle^{2s_1-2s-2} \lesssim 1  
\end{align*}
for $s_1-s\leq 1$.
\\{\bf Case B.} $k_1\neq k$
\\{\bf Case B.1.} $|k_1|> \delta |k|$\\
In this case, \eqref{R.3} is finite provide that $s_1-s\leq 1$:
\begin{align*}
\sup_k \langle k \rangle^{2s_1-2}\underset{\substack{ k_1\neq 0\\ n}}{\sum}\langle k_1\rangle^{-2s} \langle n-k_1\rangle^{-2s}\langle n-k\rangle^{-2s} \lesssim \sup_k\langle k\rangle^{2s_1-2s-2}\lesssim 1 . 
\end{align*}
\\{\bf Case B.2.} $|k_1|\leq\delta |k|$\\
\\{\bf Case B.2.1.} $|n-k_1|\leq \delta |k|$\\
In this case, we have $|n-k|\geq |k_1-k|-|k_1-n|\geq (1-2\delta)|k|$. By writing $k_1=\eta_1k$, $n-k_1=\eta_2k$ for some $|k|^{-1}\leq |\eta_1|\leq \delta$ and $0\leq |\eta_2|\leq \delta$, we obtain 
\begin{multline*}
 |ak^3-ak_1^3-(n-k_1)^3-(k-n)^3| =\Big|k^3\Big((1-\eta_1-\eta_2)^3-a+a\eta_1^3+\eta_2^3\Big)\Big|\\=|k^3[1-a+\mathcal{O}(\delta)]|\gtrsim |k|^3. 
\end{multline*}
Using the bound above we get 
\begin{multline*}
\eqref{R.3}\lesssim \sup_k \langle k \rangle^{2s_1-2s}\underset{\substack{ k_1\neq 0\\ |n|\lesssim |k|}}{\sum}\frac{\langle k_1\rangle^{-2s-2}\langle k_1-n\rangle^{-2s}}{\langle k^3\rangle^{2-2b}}\lesssim \sup_k \langle k \rangle^{2s_1-2s-6+6b}\sum_{k_1,n}\langle k_1\rangle^{-2s-2}\langle k_1-n\rangle^{-2s}\\ \lesssim \sup_k \langle k \rangle^{2s_1-2s-6+6b} \lesssim 1   
\end{multline*}
as long as $s_1-s\leq 3-3b$.
 \\{\bf Case B.2.2.} $|n-k|\leq \delta |k|$\\
In this case, we have $|n-k_1|\gtrsim |k|$. Writing $k_1=\eta_1k$, $n-k=\eta_2k$ for some $|k|^{-1}\leq |\eta_1|\leq \delta$ and $0\leq |\eta_2|\leq \delta$, we get
\begin{multline*}
 |ak^3-ak_1^3-(n-k_1)^3-(k-n)^3| =\Big|k^3\Big((1-\eta_1+\eta_2)^3-a+a\eta_1^3-\eta_2^3\Big)\Big|\\=|k^3[1-a+\mathcal{O}(\delta)]|\gtrsim |k|^3. 
\end{multline*}
Proceeding as in the previous case the supremum \eqref{R.3} can be shown to be finite in this region if $s_1-s\leq 3-3b$.

\subsection{Proof of Proposition \ref{hirprop4}}
Using the arguments of the proof of Proposition \ref{hirprop1} we are left with a supremum \begin{align*}
     \sup_k \langle k \rangle^{2s_1}\sum^*_{k_1,k_2\neq 0} \frac{\langle k_1 \rangle^{-2s}\langle k_2 \rangle^{-2s}\langle k-k_1-k_2 \rangle^{-2s}|k-k_1-k_2|^2|k_1+k_2|^2}{\big(k^3-a(k_1+k_2)^3-(k-k_1-k_2)^3\big)^2\langle k^3-(k-k_1-k_2)^3-ak_1^3-ak_2^3\rangle^{2-2b}}.
\end{align*} By a change of variable $k_2\mapsto n-k_1$, the supremum above takes the form
\begin{align}\label{R4}
     \sup_k \langle k \rangle^{2s_1}\underset{\substack{ n\neq 0\\ k_1\neq 0}}{\sum^*}\frac{\langle k_1 \rangle^{-2s}\langle n-k_1 \rangle^{-2s}\langle n-k \rangle^{-2s}|n-k|^2}{(n-\widetilde{r}_1k)^2(n-\widetilde{r}_2k)^2\langle k^3-ak_1^3-a(n-k_1)^3-(k-n)^3\rangle^{2-2b}}.
\end{align}
Note here that the condition $n\neq 0$ results from the factor $n^2$ appearing in the denominator of the prior sum that is reduced to the one in \eqref{R4}.
 \\{\bf Case A.} $|n-\widetilde{r}_1k|<\delta$ or $|n-\widetilde{r}_2k|<\delta$\\
Assume the first case $|n-\widetilde{r}_1k|<\delta$. Handling the other one is similar. Note that
$$|n-\widetilde{r}_2k|\geq (\widetilde{r}_2-\widetilde{r}_1)|k|-|n-\widetilde{r}_1k|>(\widetilde{r}_2-\widetilde{r}_1)|k|-\delta,$$  $$(\widetilde{r}_1-1)|k|-\delta<|n-k|<(\widetilde{r}_1-1)|k|+\delta$$ and $|n-\widetilde{r}_1k|\geq |k|\frac{K(\widetilde{r}_1,\epsilon)}{|k|^{\mu(\widetilde{r}_1)+\epsilon}}\gtrsim |k|^{1-\mu(\widetilde{r}_1)-\epsilon}$. These estimates imply that
\begin{align*}
\eqref{R4}\lesssim\sup_k\langle k\rangle^{2s_1-2s-2+2\mu(\widetilde{r}_1)+2\epsilon} \underset{\substack{ n\simeq \widetilde{r}_1k\\ k_1\neq 0}}{\sum^*}\frac{\langle k_1\rangle^{-2s}\langle n-k_1 \rangle^{-2s}}{\langle k^3-ak_1^3-a(n-k_1)^3-(k-n)^3  \rangle^{2-2b}}
\end{align*}
\\{\bf Case A.1.} $|k_1|\geq \delta |k|$, $|n-k_1|\geq \delta |k|$\\
The supremum above in this case is bounded by
\begin{align*}
 \sup_k\langle k\rangle^{2s_1-6s-2+2\mu(\widetilde{r}_1)+2\epsilon} \underset{\substack{ n\simeq \widetilde{r}_1k\\ k_1\neq 0}}{\sum^*}\langle k^3-ak_1^3-a(n-k_1)^3-(k-n)^3  \rangle^{2b-2}.   
\end{align*}
Write $n=\widetilde{r}_1k+\eta$ for some $|\eta|<\delta$ to get 
\begin{multline*}
|k^3-ak_1^3-a(n-k_1)^3-(k-n)^3|=|(\widetilde{r}_1k+\eta)[3ak_1^2-3a\widetilde{r}_1k_1k+\mathcal{O}(\delta)(k_1+k)+\mathcal{O}(\delta^2)]|\\\ \gtrsim |3ak_1^2-3a\widetilde{r}_1k_1k+\mathcal{O}(\delta)(k_1+k)+\mathcal{O}(\delta^2)|.
\end{multline*}
Use this estimate along with the second claim of Lemma \ref{cal.lem} for the sum in $k_1$ to conclude that the supremum is finite whenever $s_1-s<2s+1-\mu(\widetilde{r}_1)$. 
\\{\bf Case A.2.} $|k_1|< \delta |k|$\\
Here $|n-k_1|\gtrsim |k|$ since $|n-k_1|>(\widetilde{r}_1-\delta)|k|-\delta$. As above we write $n=\widetilde{r}_1k+\eta$ for some $|\eta|<\delta$ to obtain 
\begin{multline*}
|k^3-ak_1^3-a(n-k_1)^3-(k-n)^3|=|(\widetilde{r}_1k+\eta)[3ak_1(\widetilde{r}_1k-k_1)+\mathcal{O}(\delta)(k_1+k)+\mathcal{O}(\delta^2)]|\\\ \gtrsim |k_1||k|^2.
\end{multline*}
It follows that the supremum is bounded by
\begin{align*}
\sup_k\langle k\rangle^{2s_1-4s-2+2\mu(\widetilde{r}_1)+2\epsilon} 
\sum_{k_1\neq 0}\frac{\langle k_1 \rangle^{-2s-1}}{\langle k \rangle^{4-4b}}\lesssim \sup_k\langle k\rangle^{2s_1-4s-6+4b+2\mu(\widetilde{r}_1)+2\epsilon}
\end{align*}
that is finite only if $s_1-s<s+2-\mu(\widetilde{r}_1)$.
\\{\bf Case A.3.} $|n-k_1|<\delta |k|$\\
Clearly $|k_1|\gtrsim |k|$. Moreover, first writing $n=\widetilde{r}_1k+\eta$ for some $|\eta|<\delta$, and then reinstating the variable $n$ we get
\begin{multline*}
|k^3-ak_1^3-a(n-k_1)^3-(k-n)^3|=|n||3ak_1(n-k_1)+\mathcal{O}(\delta)k+\mathcal{O}(\delta^2)|\gtrsim |k|^2|n-k_1|.\end{multline*}
Therefore, by the mean zero assumption on $u$, $n-k_1\neq 0$, we have
\begin{align*}
\sup_k\langle k\rangle^{2s_1-4s-2+2\mu(\widetilde{r}_1)+2\epsilon} 
\underset{\substack{ n\simeq \widetilde{r}_1k\\ |k_1|\gtrsim|k|}}{\sum}\frac{\langle k_1-n \rangle^{-2s-1}}{\langle k \rangle^{4-4b}}\lesssim \sup_k\langle k\rangle^{2s_1-4s-6+4b+2\mu(\widetilde{r}_1)+2\epsilon}\lesssim 1
\end{align*} provided that $s_1-s<s+2-\mu(\widetilde{r}_1)$.
\\{\bf Case B.} $\delta\leq |n-\widetilde{r}_1k|<\delta |k|$ or $\delta\leq |n-\widetilde{r}_2k|<\delta |k|$\\
Suppose that $\delta\leq |n-\widetilde{r}_1k|<\delta |k|$, the other case is analogous. Notice in this case that $|n-\widetilde{r}_2k|\gtrsim |k|$ since
$|n-\widetilde{r}_2k|\geq (\widetilde{r}_2-\widetilde{r}_1)|k|-|n-\widetilde{r}_1k|>(\widetilde{r}_2-\widetilde{r}_1-\delta)|k|.$ Furthermore,
$(\widetilde{r}_1-1-\delta)|k|<|n-k|<(\widetilde{r}_1-1+\delta)|k|$. So we have
\begin{align*}
\eqref{R4}\lesssim \sup_k\langle k \rangle^{2s_1-2s} \underset{\substack{ n\neq 0\\ k_1\neq 0}}{\sum} \frac{\langle k_1\rangle^{-2s}\langle n-k_1 \rangle^{-2s}}{\langle k^3-ak_1^3-a(n-k_1)^3-(k-n)^3\rangle^{2-2b}}. 
\end{align*}
\\{\bf Case B.1.} $|k_1|\geq \delta |k|$, $|n-k_1|\geq \delta |k|$\\ 
In this region, using Lemma \ref{cal.lem} the supremum is bounded by
\begin{multline*}
\sup_k\langle k \rangle^{2s_1-4s} \underset{\substack{n\neq0 \\ |k_1|\gtrsim |k|}}{\sum} \frac{\langle k_1\rangle^{-2s}}{\langle k^3-ak_1^3-a(n-k_1)^3-(k-n)^3\rangle^{2-2b}}\\ \lesssim \sup_k\langle k \rangle^{2s_1-4s} \underset{\substack{ |k_1|\gtrsim |k|}}{\sum} \langle k_1\rangle^{-2s}\lesssim \sup_k\langle k \rangle^{2s_1-6s+1}\lesssim 1, 
\end{multline*}
when $s_1-s\leq 2s-\frac{1}{2}$.
\\{\bf Case B.2.} $|k_1|<\delta |k|$\\
Notice that $|n-k_1|\gtrsim |k|$ because $|n-k_1|> (\widetilde{r}_1-2\delta)|k|$. We write $n=(\widetilde{r}_1+\eta)k$ for some $|\eta|<\delta$ to attain \begin{multline*}
|k^3-ak_1^3-a(n-k_1)^3-(k-n)^3|=|(\widetilde{r}_1+\eta)k[3ak_1((\widetilde{r}_1+\eta)k-k_1)+\mathcal{O}(\delta)k^2]|\\\geq 3a(\widetilde{r}_1-2\delta)^2|k_1||k|^2\gtrsim |k|^2|k_1|.\end{multline*} 
Thus the supremum is finite when $s_1-s< s+\frac{1}{2}$:
\begin{multline*}
\sup_k\langle k \rangle^{2s_1-4s} \underset{\substack{ |n|\lesssim |k| \\ k_1\neq 0}}{\sum} \frac{\langle k_1\rangle^{-2s}}{\langle k^2k_1\rangle^{2-2b}} \lesssim \sup_k\langle k \rangle^{2s_1-4s-3+4b} \underset{\substack{ |k_1|\lesssim |k|}}{\sum} \langle k_1\rangle^{-2s-2+2b}\lesssim \sup_k\langle k \rangle^{2s_1-4s-3+4b}\lesssim 1.
\end{multline*}
\\{\bf Case B.3.} $|n-k_1|<\delta |k|$\\
In this case $|k_1|\gtrsim |k|$ due to $|k_1|>(\widetilde{r}_1-2\delta)|k|$. Accordingly, as in the previous case, first writing $n=(\widetilde{r}_1+\eta)k$ for some $|\eta|<\delta$ and then reinstating the variable $n$, we have \begin{multline*}
|k^3-ak_1^3-a(n-k_1)^3-(k-n)^3|=\big|(\widetilde{r}_1+\eta)k\big(3ak_1((\widetilde{r}_1+\eta)k-k_1)+\mathcal{O}(\delta)k^2\big)\big|\\\gtrsim |k|^2|n-k_1|.\end{multline*} This, recalling the mean zero assumption on $u$, gives rise to the bound for the supremum
\begin{align*}
\sup_k\langle k \rangle^{2s_1-4s} \underset{\substack{ |n|\lesssim |k| \\ |k_1|\gtrsim |k|}}{\sum} \frac{\langle n-k_1\rangle^{-2s-1}}{\langle k^2\rangle^{2-2b}}\lesssim \sup_k\langle k \rangle^{2s_1-4s-3+4b}\lesssim 1
\end{align*}
on the condition that $s_1-s<s+\frac{1}{2}$.
\\{\bf Case C.} $|n-\widetilde{r}_1k|\geq \delta|k|$, $|n-\widetilde{r}_2k|\geq \delta|k|$\\
We note that $|n|\leq\Big[\frac{\widetilde{r}_j+\delta}{\delta}\Big]|n-\widetilde{r}_jk|$, $j=1,2$. In this region, this implies that $|n-k|\lesssim|n-\widetilde{r}_1k|$. Hence the supremum is finite if $s_1-s\leq 1$:
\begin{multline*}
\eqref{R4}\lesssim \sup_k\langle k\rangle^{2s_1-2} \underset{\substack{ n\neq0 \\ k_1\neq 0}}{\sum} \frac{\langle k_1 \rangle^{-2s}\langle n-k_1\rangle^{-2s}\langle n-k\rangle^{-2s}}{\langle k^3-ak_1^3-a(n-k_1)^3-(k-n)^3\rangle^{2-2b}}\\ \lesssim \sup_k\langle k\rangle^{2s_1-2} \underset{\substack{ n\neq0 \\ k_1\neq 0}}{\sum} \langle k_1 \rangle^{-2s}\langle n-k_1\rangle^{-2s}\langle n-k\rangle^{-2s} \lesssim \sup_k\langle k\rangle^{2s_1-2}\sum_n\langle n\rangle^{-2s}\langle n-k \rangle^{-2s}\\\lesssim \langle k\rangle^{2s_1-2s-2} \lesssim 1.
\end{multline*}

\subsection{Proof of Proposition \ref{hirprop4ek}}

 We are to handle the supremum
 \begin{align*}
     \sup_k \langle k \rangle^{2s_1}\sum^*_{k_1,k_2} \frac{\langle k_1 \rangle^{-2s}\langle k_2 \rangle^{-2s}\langle k-k_1-k_2 \rangle^{-2s}|k-k_1-k_2|^2|k_1+k_2|^2}{(k_1+k_2)^2(k_1+k_2-\widetilde{r}_1k)^2(k_1+k_2-\widetilde{r}_2k)^2\langle (k-k_1)(k-k_2)(k_1+k_2)\rangle^{2-2b}}
 \end{align*} which is equivalent, by a change of variable $k_2\mapsto n-k_1$, to
 \begin{align}\label{R5}
     \sup_k \langle k \rangle^{2s_1}\underset{\substack{ n\neq 0\\ k_1}}{\sum^*}\frac{\langle k_1 \rangle^{-2s}\langle n-k_1 \rangle^{-2s}\langle n-k \rangle^{-2s}|n-k|^2}{(n-\widetilde{r}_1k)^2(n-\widetilde{r}_2k)^2\langle n(k-k_1)(k+k_1-n)\rangle^{2-2b}}.
 \end{align} In the case $n(k-k_1)(k+k_1-n)=0$, that is either $k_1=k$ or $k_1=n-k$, \eqref{R5} boils down to
 \begin{align*}
     \sup_k \langle k \rangle^{2s_1-2s} \sum^*_{n\neq 0} \frac{\langle n-k \rangle^{-4s}|n-k|^2}{(n-\widetilde{r}_1k)^2(n-\widetilde{r}_2k)^2}
 \end{align*}
 which essentially can be treated as that in the Case A of the proof of Proposition \ref{hirprop1}. Hence the supremum is finite if $s_1-s\leq 1$ and $s_1-s<2s+1-\mu(\widetilde{r}_j)$. Next we move to the complementary case:
 \\{\bf Case A.} $n(k-k_1)(k+k_1-n)\neq0$\\
 In this case, \begin{align*}
     \eqref{R5}\lesssim \sup_k \langle k \rangle^{2s_1} \underset{\substack{ n\neq 0\\ k_1}}{\sum^*} \frac{\langle k_1 \rangle^{-2s}\langle n-k_1 \rangle^{-2s}\langle n-k \rangle^{-2s}|n-k|^2}{(n-\widetilde{r}_1k)^2(n-\widetilde{r}_2k)^2\langle n\rangle^{2-2b}\langle n-k-k_1\rangle^{2-2b}\langle k_1-k\rangle^{2-2b}}.
 \end{align*}
 \\{\bf Case A.1.} $|n-\widetilde{r}_1k|\geq \delta |k|$, $|n-\widetilde{r}_2k|\geq \delta |k|$\\
 In this region, $|n|\leq \Big(\frac{\widetilde{r}_j+\delta}{\delta}\Big)|n-\widetilde{r}_jk|$, $j=1,2$. Thus, $|n-k|\lesssim |n-\widetilde{r}_1k|$, by which the supremum above is estimated by
 \begin{multline*}
\sup_k\langle k\rangle^{2s_1-2} \underset{\substack{ n\neq 0\\ k_1}}{\sum} \frac{\langle k_1 \rangle^{-2s}\langle n-k_1 \rangle^{-2s}\langle n-k \rangle^{-2s}}{\langle n\rangle^{2-2b}\langle n-k-k_1\rangle^{2-2b}\langle k_1-k\rangle^{2-2b}}\\ \lesssim  \sup_k\langle k\rangle^{2s_1-2} \underset{\substack{ n\neq 0\\ k_1}}{\sum} \langle k_1 \rangle^{-2s}\langle n-k_1 \rangle^{-2s} \langle n-k \rangle^{-2s}  \lesssim  \sup_k\langle k\rangle^{2s_1-2s-2}\lesssim 1
 \end{multline*}
 for $s_1-s\leq 1$.
 \\{\bf Case A.2.} $\delta \leq |n-\widetilde{r}_1k|<\delta |k|$ or $\delta \leq |n-\widetilde{r}_2k|< \delta |k|$\\
 Assume the case $\delta \leq |n-\widetilde{r}_1k|<\delta |k|$, the other one is treated similarly. In this case, the estimates 
 \begin{align*}
     |n-\widetilde{r}_2k|\geq (\widetilde{r}_2-\widetilde{r}_1)|k|-|n-\widetilde{r}_1k|>(\widetilde{r}_2-\widetilde{r}_1-\delta)|k|,
 \end{align*}
 $(\widetilde{r}_1-\delta) |k|<|n|<(\widetilde{r}_1+\delta)|k|$, and $(\widetilde{r}_1-1-\delta)|k|<|n-k|<(\widetilde{r}_1-1+\delta)|k|$ lead to the bound
 \begin{align*}
 \sup_k\langle k\rangle^{2s_1-2s-2+2b}\underset{\substack{ n\neq 0\\ k_1}}{\sum} \frac{\langle k_1 \rangle^{-2s}\langle n-k_1 \rangle^{-2s}}{\langle k_1-k \rangle^{2-2b}\langle k_1+k-n \rangle^{2-2b}}&\lesssim \sup_k\langle k\rangle^{2s_1-2s-4+4b}\sum_{k_1} \frac{\langle k_1 \rangle^{-2s}}{\langle k_1-k \rangle^{2-2b}}\\& \lesssim \sup_k\langle k\rangle^{2s_1-2s-6+6b} \lesssim 1
\end{align*}
 provided that $s_1-s\leq 3-3b$.
 \\{\bf Case A.3.} $|n-\widetilde{r}_1k|<\delta$ or $|n-\widetilde{r}_2k|< \delta $\\
 Assume that $|n-\widetilde{r}_1k|<\delta$, the treatment of the other case is similar. Note that \begin{align*}\label{sum}
    &(\widetilde{r}_1-1)|k|-\delta<|n-k|<(\widetilde{r}_1-1)|k|+\delta,\\&|n-\widetilde{r}_2k|\geq (\widetilde{r}_2-\widetilde{r}_1)|k|-|n-\widetilde{r}_1k|>(\widetilde{r}_2-\widetilde{r}_1)|k|-\delta .
 \end{align*}
 Therefore the supremum is majorized by 
 \begin{align}
\sup_k\langle k\rangle^{2s_1-2s-4+2b+2\mu(\widetilde{r}_1)+2\epsilon} \underset{\substack{ n\simeq \widetilde{r}_1k\\ k_1}}{\sum}\frac{\langle k_1\rangle^{-2s}\langle n- k_1\rangle^{-2s}}{\langle k-k_1\rangle^{2-2b}\langle n-k-k_1\rangle^{2-2b}}.   
 \end{align}
 There is merely a single term for the sum in $n$, that is the one with $n\simeq \widetilde{r}_1k$. So only the estimate regarding the sum in $k_1$ matters here. If $|k_1|\ll |k|$ then all the other factors in the sum in \eqref{sum} are of order $\gtrsim |k|$; likewise if $|n-k_1|\ll|k|$ then the remaining factors are again of order $\gtrsim |k|$. Thus in these cases, the sum in $\eqref{sum}\lesssim \langle k\rangle^{-4-2s+4b}$ entailing $\eqref{sum}\lesssim\langle k\rangle^{2s_1-4s-8+6b+2\mu(\widetilde{r}_1)+2\epsilon}$ which is finite as long as $2s_1-4s-8+6b+2\mu(\widetilde{r}_1)+2\epsilon\leq 0$ or equivalently $s_1-s<s+5/2-\mu(\widetilde{r}_1)$. If $|k_1-k|\ll |k|$ then the factors with exponent $-2s$ in the numerator are of order $\gtrsim |k|$; likewise if $|n-k-k_1|\ll |k|$ then the factors in the numerator are of order $\gtrsim |k|$. In the either case, the sum in $\eqref{sum}\lesssim \langle k\rangle^{-4s-3+4b}$ giving rise to $\eqref{sum}\lesssim\langle k\rangle^{2s_1-6s-7+6b+2\mu(\widetilde{r}_1)+2\epsilon}\lesssim 1$ provided that $s_1-s<2s+2-\mu(\widetilde{r}_1)$.

\subsection{Proof of Proposition \ref{hirprop5}} 
In order to handle $R_5$, we need to divide the sum into pieces where $k_1+k_2\neq 0$ and $k_1+k_2=0$.
\begin{multline*}
R_5(u,u,v)_k=-9kv_k\sum\limits_{k_1\neq 0}^*\frac{(k-k_1)|u_{k_1}|^2}{k^3-ak_1^3-(k-k_1)^3}+9i \underset{\substack{k_1+k_2+k_3=k \\ k_1+k_2\neq 0\\ k_1\neq 0}}{\sum^*} \frac{k_3(k_2+k_3)u_{k_1}u_{k_2}v_{k_3}}{k^3-ak_1^3-(k_2+k_3)^3} \\=-9kv_k\sum\limits_{k_1>0}^*\Bigg(\frac{k+k_1}{k^3+ak_1^3-(k+k_1)^3}+\frac{k-k_1}{k^3-ak_1^3-(k-k_1)^3}\Bigg)|u_{k_1}|^2\\+9i \underset{\substack{k_1+k_2+k_3=k \\ k_1+k_2\neq 0\\ k_1\neq 0}}{\sum^*} \frac{k_3(k_2+k_3)u_{k_1}u_{k_2}v_{k_3}}{k^3-ak_1^3-(k_2+k_3)^3}
\\=18kv_k\sum\limits_{k_1>0}^*\frac{k_1^2|u_{k_1}|^2}{(1-a)(k_1-\widetilde{r}_1k)(k_1+\widetilde{r}_1k)(k_1-\widetilde{r}_2k)(k_1+\widetilde{r}_2k)}\\+9i \underset{\substack{k_1+k_2+k_3=k \\ k_1+k_2\neq 0\\ k_1\neq 0}}{\sum^*} \frac{k_3(k_2+k_3)u_{k_1}u_{k_2}v_{k_3}}{k^3-ak_1^3-(k_2+k_3)^3}=:S_1+S_2
\end{multline*}
For the first sum, using Cauchy-Schwarz and Young inequalities and the Lemma \ref{cal.lem} yields that, 
\begin{multline*}\norm{S_1}_{X_1^{s_1,b-1}}\lesssim \sup_k\langle k \rangle^{2s_1+2-2s}\sum\limits_{k_1>0}^*\frac{\langle k_1\rangle^{4-4s}}{(k_1-\widetilde{r}_1k)^2(k_1+\widetilde{r}_1k)^2(k_1-\widetilde{r}_2k)^2(k_1+\widetilde{r}_2k)^2}\\ \times\norm{u}^2_{X_a^{s,1/2}}\norm{v}_{X_1^{s,1/2}}.\end{multline*}
Thus it is required to show that the supremum above is finite. Since $k_1>0$ and $\widetilde{r}_1,\widetilde{r}_2>0$, to take advantage of the multipliers in the denominator of the sum in the supremum, we consider the cases in which $k<0$ and $k>0$. We just examine the $k<0$ case as the other case can be treated similarly. Thus, by the sign considerations, both $|k_1-\widetilde{r}_1k|$ and $|k_1-\widetilde{r}_2k|$ are of order $\gtrsim |k|, k_1$ by which the supremum is replaced by the bound \begin{align}\label{6}\sup_k\langle k\rangle^{2s_1-2s}\sum\limits_{k_1>0}^*\frac{\langle k_1\rangle^{2-4s}}{(k_1+\widetilde{r}_1k)^2(k_1+\widetilde{r}_2k)^2}.\end{align} First we observe that the case $|k_1+\widetilde{r}_1k|, |k_1+\widetilde{r}_2k|\leq \delta |k|$ cannot arise concurrently, because choosing $\delta < \frac{\widetilde{r}_2-\widetilde{r}_1}{2}$ entails that
$$(\widetilde{r}_2-\widetilde{r}_1)|k|\leq |k_1+\widetilde{r}_1k|+|k_1+\widetilde{r}_2k|\leq 2\delta |k|<(\widetilde{r}_2-\widetilde{r}_1)|k|.$$ We consider the following cases:\\
{\bf Case A.} $|k_1+\widetilde{r}_1k|, |k_1+\widetilde{r}_2k|\geq \delta |k|$\\
In this case, $|k_1+\widetilde{r}_jk|\geq \Big(\frac{\delta}{\delta+\widetilde{r}_j}\Big)k_1$, $j=1,2$, that implies 
\begin{align*}
\eqref{6} \lesssim \sup_k\langle k \rangle^{2s_1-2s-2}\sum\limits_{k_1>0}\langle k_1\rangle^{-4s}  \end{align*}
 which is finite provided that $s_1-s\leq 1$.
\\{\bf Case B.} $|k_1+\widetilde{r}_1k|\geq \delta |k|$, $\delta \leq |k_1+\widetilde{r}_2k|<\delta |k|$ (or with the roles of $\widetilde{r}_1$ and $\widetilde{r}_2$ are switched)
Note that $(\widetilde{r}_2-\delta)|k|<k_1<(\widetilde{r}_2+\delta)|k|$. Then the supremum is bounded by
$$\sup_k\langle k\rangle^{2s_1-2s-1}\sum\limits_{k_1\geq |k|}\langle k_1 \rangle^{-4s+1}\lesssim \sup_k\langle k\rangle^{2s_1-6s+1}\lesssim 1$$
for $s_1-s\leq 2s-\frac{1}{2}$.
\\{\bf Case C.} $|k_1+\widetilde{r}_1k|\geq \delta |k|$, $|k_1+\widetilde{r}_2k|<\delta $ (or with the roles of $\widetilde{r}_1$ and $\widetilde{r}_2$ are switched)\\ Using the bound $|k_1+\widetilde{r}_2k|\gtrsim |k|^{1-\mu(\widetilde{r}_2)-\epsilon}$ and $k_1\simeq -\widetilde{r}_2k$, 
$$\eqref{6}\lesssim \langle k \rangle^{2s_1-6s-2+2\mu(\widetilde{r}_2)+2\epsilon}\lesssim 1$$
as long as $s_1-s<2s+1-\mu(\widetilde{r}_2)$.
As for the $X^{s_1,b-1}_1$ norm of the sum $S_2$, proceeding as before, we need to show that
$$\sup_k\langle k\rangle^{2s_1}\underset{\substack{k_1\neq 0, k_2 \\ k_1+k_2\neq 0}}{\sum}\frac{\langle k_1 \rangle^{-2s}\langle k_2\rangle^{-2s}\langle k-k_1-k_2 \rangle^{-2s}|k-k_1-k_2|^2|k-k_1|^2}{\big[k^3-ak_1^3-(k_2+k_3)^3\big]^2\langle k^3-ak_1^3-ak_2^3-(k-k_1-k_2)^3\rangle^{2-2b}}\lesssim 1.$$ This, by the change of variable $k_2\mapsto n-k_1$, is equivalent to estimate 
\begin{align}\label{7}\sup_k\langle k\rangle^{2s_1}\underset{\substack{k_1\neq 0\\ n\neq 0}}{\sum}\frac{\langle k_1 \rangle^{-2s-2}\langle n-k_1\rangle^{-2s}\langle n-k \rangle^{-2s}|n-k|^2|k-k_1|^2}{(k_1-\widetilde{r}_1k)^2(k_1-\widetilde{r}_2k)^2\langle k^3-ak_1^3-a(n-k_1)^3-(k-n)^3\rangle^{2-2b}}.\end{align}
{\bf Case A.} $|k_1-\widetilde{r}_1k|<\delta$ or $|k_1-\widetilde{r}_2k|<\delta$ \\
The treatment of the both cases are similar, so assume that $|k_1-\widetilde{r}_1k|<\delta$. We have the following estimates 
$$
|k_1-k|\leq (\widetilde{r}_1-1)|k|+|k_1-\widetilde{r}_1k|< (\widetilde{r}_1-1)|k|+\delta, $$ $$|k_1-\widetilde{r}_2k|\geq (\widetilde{r}_2-\widetilde{r}_1)|k|-|k_1-\widetilde{r}_1k|> (\widetilde{r}_2-\widetilde{r}_1)|k|-\delta.$$ 
{\bf Case A.1.} $|n-k_1|\geq \delta |k|$, $|n-k|\geq \delta |k|$ \\ Using the inequality $|n-k|\leq |n-k_1|+|k_1-k|$, the relation $-2s+1<0$ and the above estimates,
\begin{multline*}
 \eqref{7}\lesssim \sup_k\langle k \rangle^{2s_1}\underset{\substack{k_1\simeq \widetilde{r}_1k\\ n\neq 0}}{\sum}\frac{\langle k_1 \rangle^{-2s-2}\langle n-k_1 \rangle^{-2s+1}\langle n-k \rangle^{-2s+1}}{(k_1-\widetilde{r}_1k)^2\langle k^3-ak_1^3-a(n-k_1)^3-(k-n)^3\rangle^{2-2b}}\\+\sup_k\langle k \rangle^{2s_1+1}\underset{\substack{k_1\simeq \widetilde{r}_1k\\ n\neq 0}}{\sum}\frac{\langle k_1 \rangle^{-2s-2}\langle n-k_1 \rangle^{-2s}\langle n-k \rangle^{-2s+1}}{(k_1-\widetilde{r}_1k)^2\langle k^3-ak_1^3-a(n-k_1)^3-(k-n)^3\rangle^{2-2b}}\\ \lesssim \sup_k\langle k\rangle^{2s_1-6s-2+2\mu(\widetilde{r}_1)+2\epsilon }\underset{\substack{k_1\simeq \widetilde{r}_1k\\ n\neq 0}}{\sum}\frac{1}{\langle k^3-ak_1^3-a(n-k_1)^3-(k-n)^3\rangle^{2-2b}}
\end{multline*}
which is finite provided that $s_1-s<2s+1-\mu(\widetilde{r}_1)$. 
\\{\bf Case A.2.} $|n-k|< \delta|k|$\\ Here $|n-k_1|\geq (\widetilde{r}_1-1)|k|-|k-n|-|k_1-\widetilde{r}_1k|>(\widetilde{r}_1-1-\delta)|k|-\delta.$ In this region for $|\eta_1|, |\eta_2|<\delta$, we may write
$n-k=\eta_1k$ and $k_1-\widetilde{r}_1k=\eta_2$. So we have
\begin{multline*}
|k^3-ak_1^3-a(n-k_1)^3-(k-n)^3|=|k^3-a(\widetilde{r}_1k+\eta_2)^3-a\big((1+\eta_1-\widetilde{r}_1)k-\eta_2\big)^3+\eta_1^3k^3|\\=|\big(1-a+3a\widetilde{r}_1-3a\widetilde{r}_1^2+\mathcal{O}(\delta)\big)k^3+\mathcal{O}(\delta)k^2+\mathcal{O}(\delta^2)k+\mathcal{O}(\delta^3)|\gtrsim |k|^3,   
\end{multline*}
the last inequality follows since $\widetilde{r}_1$ is the root of the quadratic $(1-a)x^2-3x+3$. Using these bounds 
$$\eqref{7}\lesssim \sup_k\langle k \rangle^{2s_1-4s-2+2\mu(\widetilde{r}_1)+2\epsilon}\underset{\substack{ n\neq 0}}{\sum}\frac{\langle n-k\rangle^{-2s}}{\langle  k^3\rangle^{2-2b}}\lesssim \sup_k\langle k \rangle^{2s_1-4s-8+6b+2\mu(\widetilde{r}_1)+2\epsilon}\lesssim 1$$ for $s_1-s <s+4-3b-\mu(\widetilde{r}_1)$.
\\{\bf Case A.3.} $|n-k_1|< \delta|k|$\\
In this region, $(\widetilde{r}_1-1-\delta)|k|-\delta<|n-k|<(\widetilde{r}_1-1+\delta)|k|+\delta$. So we may write $n-k=\eta_1k$ for some $\eta_1$ with $|k|^{-1}\leq|\eta_1|\leq \epsilon$ and $k_1-\widetilde{r}_1k=\eta_2$ for some $\eta_2$ with $|\eta_2|<\delta<\epsilon$. Therefore
\begin{multline*}
|k^3-ak_1^3-a(n-k_1)^3-(k-n)^3|=|k^3-a(\widetilde{r}_1k+\eta_2)^3-a\big((1+\eta_1-\widetilde{r}_1)k-\eta_2\big)^3+\eta_1^3k^3|\\=|\big(1-a+3a\widetilde{r}_1-3a\widetilde{r}_1^2+\mathcal{O}(\epsilon)\big)k^3+\mathcal{O}(\delta)k^2+\mathcal{O}(\delta^2)k+\mathcal{O}(\delta^3)|\gtrsim |k|^3,   
\end{multline*}
it follows, as in the previous case, that the supremum is bounded for $s_1-s <s+4-3b-\mu(\widetilde{r}_1)$:
$$\eqref{7}\lesssim \sup_k\langle k \rangle^{2s_1-4s-2+2\mu(\widetilde{r}_1)+2\epsilon}\underset{\substack{ n\neq 0}}{\sum}\frac{\langle n-k_1\rangle^{-2s}}{\langle  k^3\rangle^{2-2b}}\lesssim \sup_k\langle k \rangle^{2s_1-4s-8+6b+2\mu(\widetilde{r}_1)+2\epsilon}\lesssim 1.$$
\\{\bf Case B.} $\delta\leq |k_1-\widetilde{r}_1k|< \delta|k|$ or $\delta\leq |k_1-\widetilde{r}_2k|< \delta|k|$ \\
We assume the first case $\delta\leq |k_1-\widetilde{r}_1k|< \delta|k|$; the second one can be treated in a similar fashion. In this region, we have the estimates: $|k_1-k|<(\widetilde{r}_1-1+\delta)|k|$, $|k_1-\widetilde{r}_2k|\geq (\widetilde{r}_2-\widetilde{r}_1)|k|-|k_1-\widetilde{r}_1k|>(\widetilde{r}_2-\widetilde{r}_1-\delta)|k|$. Also $|k_1-\widetilde{r}_1k|< \delta|k|$ implies $|k_1|>|k|$. Thus, 
\begin{align}\label{8}
    \eqref{7}\lesssim \sup_k\langle k \rangle^{2s_1}\underset{\substack{|k_1|>|k|\\ n\neq 0}}{\sum}\frac{\langle k_1 \rangle^{-2s-2}\langle n-k_1 \rangle^{-2s}\langle n-k \rangle^{-2s}|n-k|^2}{\langle k^3-ak_1^3-a(n-k_1)^3-(k-n)^3\rangle^{2-2b}}.
\end{align}
\\{\bf Case B.1.} $|n-k_1|\geq \delta|k|$, $|n-k|\geq \delta|k|$\\
In this case,
\begin{multline*}
    \eqref{8}\lesssim \sup_k\langle k \rangle^{2s_1}\underset{\substack{|k_1|>|k|\\ n\neq 0}}{\sum}\frac{\langle k_1 \rangle^{-2s-2}\langle n-k_1 \rangle^{-2s+1}\langle n-k \rangle^{-2s+1}}{\langle k^3-ak_1^3-a(n-k_1)^3-(k-n)^3\rangle^{2-2b}}\\+\sup_k\langle k \rangle^{2s_1}\underset{\substack{|k_1|>|k|\\ n\neq 0}}{\sum}\frac{\langle k_1 \rangle^{-2s-2}\langle n-k_1 \rangle^{-2s}\langle n-k \rangle^{-2s+1}|k_1-k|}{\langle k^3-ak_1^3-a(n-k_1)^3-(k-n)^3\rangle^{2-2b}}\\ \lesssim \sup_k\langle k \rangle^{2s_1-4s}\underset{\substack{|k_1|>|k|\\ n\neq 0}}{\sum}\frac{\langle k_1 \rangle^{-2s}}{\langle k^3-ak_1^3-a(n-k_1)^3-(k-n)^3\rangle^{2-2b}}\\ \lesssim \sup_k\langle k\rangle^{2s_1-4s} \sum\limits_{|k_1|>|k|}\langle k_1 \rangle^{-2s}\lesssim \sup_k\langle k \rangle^{2s_1-6s+1}\lesssim 1
\end{multline*}
for $s_1-s\leq 2s-\frac{1}{2}$.
\\{\bf Case B.2.} $|n-k|< \delta|k|$\\
The required estimate specific to this region is $$|n-k_1|\geq (\widetilde{r}_1-1)|k|-|k_1-\widetilde{r}_1k|-|n-k|>(\widetilde{r}_1-1-2\delta)|k|.$$ Also the restriction $|n-k|<\delta|k|$ that yields $|n|\lesssim |k|$ is essential for the summability in the $n$-variable.
Let $\eta_j$ be some constants satisfying $|\eta_j|<\delta$, $j=1,2$, for which $n-k=\eta_1k$ and $k_1-\widetilde{r}_1k=\eta_2k$. Then 
\begin{multline*}
|k^3-ak_1^3-a(n-k_1)^3-(k-n)^3|=|k^3-a(\widetilde{r}_1+\eta_2)^3k^3-a(1+\eta_1-\widetilde{r}_1-\eta_2)^3k^3-\eta_1^3k^3|\\=|\big(1-a+3a\widetilde{r}_1(1-\widetilde{r}_1)+\mathcal{O}(\delta) \big)k^3|\gtrsim |k|^3.
\end{multline*}
Using the above estimates 
\begin{multline*}
    \eqref{8}\lesssim \sup_k\langle k \rangle^{2s_1-2s-6+6b}\underset{\substack{|k_1|>|k|\\ |n|\lesssim |k|}}{\sum}\langle k_1 \rangle^{-2s}\lesssim \sup_k\langle k \rangle^{2s_1-2s-5+6b}\sum\limits_{|k_1|>|k|}\langle k_1 \rangle^{-2s}\\ \lesssim \sup_k\langle k \rangle^{2s_1-4s-4+6b}\lesssim 1,
\end{multline*}
as long as $s_1-s\leq s+2-3b$.
\\{\bf Case B.3.} $|n-k_1|< \delta|k|$\\
Here $|n-k|\approx|k|$, since $(\widetilde{r}_1-1-2\delta)|k|<|n-k|<(\widetilde{r}_1-1+2\delta)|k|$. Hence for $s_1-s\leq s-\frac{1}{2}$, we have
\begin{multline*}
    \eqref{8}\lesssim  \sup_k\langle k \rangle^{2s_1-2s}\underset{\substack{|k_1|>|k|\\ n\neq 0}}{\sum}\frac{\langle k_1 \rangle^{-2s}}{\langle k^3-ak_1^3-a(n-k_1)^3-(k-n)^3\rangle^{2-2b}}\\ \lesssim \sup_k\langle k \rangle^{2s_1-2s}\sum\limits_{|k_1|>|k|}\langle k_1 \rangle^{-2s}\lesssim \sup_k\langle k \rangle^{2s_1-4s+1}\lesssim 1.
\end{multline*}
{\bf Case C.} $|k_1-\widetilde{r}_1k|\geq \delta |k|$, $|k_1-\widetilde{r}_2k|\geq \delta |k|$ \\
Note that $|k_1-k|\lesssim |k_1-\widetilde{r}_1k|$, because $|k_1-k|\leq |k_1-\widetilde{r}_1k|+(\widetilde{r}_1-1)|k|\leq \big(\frac{\widetilde{r}_1-1+\delta}{\delta}\big)|k_1-\widetilde{r}_1k|$. We need to bound
\begin{multline*}
\eqref{7}\lesssim \sup_k\langle k \rangle^{2s_1-2}\underset{\substack{k_1\neq 0\\ n\neq 0}}{\sum}\frac{\langle k_1 \rangle^{-2s-2}\langle n-k_1 \rangle^{-2s+1}\langle n-k\rangle^{-2s+1}}{\langle k^3-ak_1^3-a(n-k_1)^3-(k-n)^3\rangle^{2-2b}}\\+ \sup_k\langle k\rangle^{2s_1-2}\underset{\substack{k_1\neq 0\\ n\neq 0}}{\sum}\frac{\langle k_1 \rangle^{-2s-2}\langle n-k_1 \rangle^{-2s}\langle n-k\rangle^{-2s+1}|k-k_1|}{\langle k^3-ak_1^3-a(n-k_1)^3-(k-n)^3\rangle^{2-2b}}=:\text{I}_1+\text{I}_2.  
\end{multline*}
{\bf Case C.1.} $|k_1|\geq \delta |k|$\\
In this case, since $-2s+1<0$,
\begin{multline*}
    \text{I}_1\lesssim \sup_k\langle k \rangle^{2s_1-2}\underset{\substack{k_1\neq 0\\ n\neq 0}}{\sum}\frac{\langle k_1 \rangle^{-2s-2}\langle k_1-k \rangle^{-2s+1}}{\langle k^3-ak_1^3-a(n-k_1)^3-(k-n)^3\rangle^{2-2b}} \\ \lesssim \sup_k\langle k \rangle^{2s_1-4}\sum\limits_{k_1\neq 0} \langle k_1 \rangle^{-2s}\langle k_1-k \rangle^{-2s+1}\lesssim \sup_k\langle k\rangle^{2s_1-2s-3}\lesssim 1
\end{multline*}
whenever $s_1-s\leq \frac{3}{2}$. In the same way, the boundedness of $\text{I}_2$ can be shown provided that $s_1-s\leq 1$.
\\{\bf Case C.2.} $|k_1|< \delta |k|$\\
In this case, $|k-k_1|\lesssim |k|$ implies the bound
\begin{multline*}
\text{I}_1+ \text{I}_2\lesssim \sup_k\langle k \rangle^{2s_1-2}\underset{\substack{k_1\neq 0\\ n\neq 0}}{\sum}\frac{\langle k_1 \rangle^{-2s-2}\langle n-k_1 \rangle^{-2s+1}\langle n-k\rangle^{-2s+1}}{\langle k^3-ak_1^3-a(n-k_1)^3-(k-n)^3\rangle^{2-2b}}\\+ \sup_k\langle k\rangle^{2s_1-1}\underset{\substack{k_1\neq 0\\ n\neq 0}}{\sum}\frac{\langle k_1 \rangle^{-2s-2}\langle n-k_1 \rangle^{-2s}\langle n-k\rangle^{-2s+1}}{\langle k^3-ak_1^3-a(n-k_1)^3-(k-n)^3\rangle^{2-2b}}=:\text{J}_1+\text{J}_2.   \end{multline*}
\\{\bf Case C.2.1.} $|n-k_1|\leq \delta |k|$\\
 Note here that $|n-k|\geq |k|-|k_1|-|n-k_1|\geq (1-2\delta)|k|$. Moreover, since $u$ is mean zero, we have, for some $\eta_j\neq0$ satisfying $|k|^{-1}\leq|\eta_j|<\delta$, $j=1,2$, that $k_1=\eta_1k$ and $n-k_1=\eta_2k$. Hence the restriction $(\eta_1+\eta_2)k=n\neq 0$ provides us with a parameter $\eta:=\eta_1+\eta_2$ satisfying $|k|^{-1}\leq |\eta|<2\delta$, and yielding the following bound
 \begin{multline*}
 |k^3-ak_1^3-a(n-k_1)^3-(k-n)^3|=|\big(1-a(\eta_1^3+\eta_2^3)-(1-\eta)^3\big)k^3|\\=|k|^3|\eta||3(1-\eta)+\eta^2(1-a)+3a\eta_1\eta_2| \gtrsim |k|^2 . 
 \end{multline*}
 Exploiting the above estimates we arrive at 
\begin{align*}
 \text{J}_1 \lesssim \sup_k\langle k \rangle^{2s_1-2s-5+4b} \underset{\substack{k_1\neq 0\\ |n|\lesssim |k|}}{\sum} \langle k_1 \rangle^{-2s-2}\lesssim \sup_k\langle k \rangle^{2s_1-2s-4+4b}\lesssim 1  
\end{align*}
and
\begin{align*}
 \text{J}_2 &\lesssim \sup_k\langle k \rangle^{2s_1-5+4b} \underset{\substack{k_1\neq 0\\ n\neq 0}}\sum \langle k_1 \rangle^{-2s-2}\langle n-k_1 \rangle^{-2s}\langle n-k \rangle^{-2s+1} \\& \lesssim \sup_k\langle k \rangle^{2s_1-5+4b}\sum\limits_{k_1\neq0}\langle k_1 \rangle^{-2s-2}\langle k_1-k \rangle^{-2s+1}\lesssim \sup_k\langle k \rangle^{2s_1-2s-4+4b}\lesssim 1  
\end{align*}
provided that $s_1-s\leq 2-2b$.
\\{\bf Case C.2.2.} $|n-k|\leq \delta |k|$\\
In this region, $|n-k_1|\geq |k|-|k_1|-|n-k|>(1-2\delta)|k|$. Then, we write $k_1=\eta_1k$ for some $\eta_1$ with $|k|^{-1}\leq|\eta_1|<\delta$, and $n-k=\eta_2k$ for some $\eta_2$ with $0\leq |\eta_2|\leq\delta$. Via these
\begin{multline*}
|k^3-ak_1^3-a(n-k_1)^3-(k-n)^3|=|\big(1+\eta_2^3-a(\eta_1^3+(1-\eta_1+\eta_2)^3)\big)k^3|=|k|^3|1-a+\mathcal{O}(\delta)|\\\gtrsim |k|^3.  
\end{multline*}
As above we get
\begin{align*}
 \text{J}_1+\text{J}_2\lesssim \sup_k\langle k \rangle^{2s_1-2s-6+6b}\lesssim 1  
\end{align*}
whenever $s_1-s\leq 3-3b$.

\section{Existence of Global Attractor}
This section is devoted to the proof of Theorem \ref{attrhir}. We consider the system
\begin{align}\label{disshir}
\begin{cases}
u_t+  au_{xxx}+\gamma u+3a (u^2)_x+\beta(v^2)_x =f \\ v_t+v_{xxx}+\gamma v+3uv_x=g \\ (u,v)\rvert_{t = 0}=(u_0,v_0)\in \dot{H}^1(\mathbb{T})\times H^1(\mathbb{T}).
\end{cases}
\end{align}
Recall that $\beta<0$. Firstly we show the existence of an absorbing set corresponding to the system \eqref{disshir}. To achieve this we use conserved energies \eqref{conservationhir} to obtain:  
\begin{lemma}\label{energyhir}
Let $(u,v)$ be a solution of the system \eqref{disshir} with data $(u_0,v_0)$, we have the a priori estimate:
\begin{align*}
    \norm{u(t)}_{H^1}+\norm{v(t)}_{H^1}\leq C=C(a,\beta,\gamma,\norm{u_0}_{H^1},\norm{v_0}_{H^1},\norm{f}_{H^1},\norm{g}_{H^1}),
\end{align*}
for $t>0$.
\end{lemma}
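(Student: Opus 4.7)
The plan is to exploit the conserved energies $E_1$ and $E_2$ from \eqref{conservationhir} as Lyapunov-type functionals for the dissipative system \eqref{disshir}. Because the conservation identities are exact when $\gamma=0$ and $f=g=0$, all contributions to $\frac{d}{dt}E_j$ along the damped-forced flow come solely from the damping and forcing terms, so everything reduces to tracking those explicit terms and estimating them with standard inequalities.

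First I would handle the $L^2$ part. Differentiating $E_1$ along \eqref{disshir} and integrating by parts (the dispersive and the nonlinear terms vanish against the conservation law densities) yields an identity of the form
\begin{equation*}
\tfrac{d}{dt}E_1(u,v) \;=\; -2\gamma E_1(u,v) + 2\!\int\! uf\,dx - \tfrac{4\beta}{3}\!\int\! vg\,dx.
\end{equation*}
Because $\beta<0$, $E_1(u,v)\simeq \norm{u}_{L^2}^2 + \norm{v}_{L^2}^2$; applying Cauchy--Schwarz on the forcing, Young's inequality, and Gr\"onwall yields a uniform-in-time bound on $\norm{u(t)}_{L^2}^2 + \norm{v(t)}_{L^2}^2$ in terms of $\norm{u_0}_{L^2},\,\norm{v_0}_{L^2},\,\norm{f}_{L^2},\,\norm{g}_{L^2}$, and $\gamma$.

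Next, to upgrade to $H^1$, I would use $E_2$. The key preliminary is coercivity: writing $E_2=(1-a)\norm{u_x}_{L^2}^2 + 2|\beta|\norm{v_x}_{L^2}^2 - 2(1-a)\!\int u^3 + 2\beta\!\int uv^2$, I would control the cubic terms via the Gagliardo--Nirenberg inequalities $\norm{u}_{L^3}^3 \lesssim \norm{u}_{L^2}^{5/2}\norm{u_x}_{L^2}^{1/2}$ and $\int |u|v^2 \lesssim \norm{u}_{L^2}\norm{v}_{L^2}^{3/2}\norm{v}_{H^1}^{1/2}$, plugging in the uniform $L^2$ bound from the first step and applying Young to absorb $\tfrac{1-a}{2}\norm{u_x}_{L^2}^2 + |\beta|\norm{v_x}_{L^2}^2$. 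This yields $E_2 + C \simeq \norm{u_x}_{L^2}^2 + \norm{v_x}_{L^2}^2$ with $C$ depending only on the $L^2$ bound. Then I would compute $\frac{d}{dt}E_2$ by pairing $(u_t,v_t)$ with the variational derivatives $\delta_u E_2 = -2(1-a)u_{xx} - 6(1-a)u^2 + 2\beta v^2$ and $\delta_v E_2 = 4\beta v_{xx} + 4\beta uv$; the dispersion/nonlinearity parts cancel by the underlying conservation, leaving
\begin{equation*}
\tfrac{d}{dt}E_2 \;=\; -2\gamma E_2 + 2\gamma\!\bigl[(1-a)\!\int u^3 - \beta\!\int uv^2\bigr] + \int f\,\delta_u E_2 + \int g\,\delta_v E_2.
\end{equation*}

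The bracketed cubic correction is controlled by Gagliardo--Nirenberg and Young's inequality (using the $L^2$ bound) as $\varepsilon E_2 + C_\varepsilon$, while the forcing integrals, after integration by parts on the $u_{xx}$ and $v_{xx}$ terms, reduce to $\int f_x u_x$, $\int fu^2$, $\int fv^2$, $\int g_x v_x$, $\int guv$, each bounded (using $f,g\in H^1$ and Sobolev embedding $H^1\hookrightarrow L^\infty$ on $\mathbb{T}$) by $\varepsilon E_2 + C_\varepsilon$. Choosing $\varepsilon$ small compared to $\gamma$ produces the differential inequality $\frac{d}{dt}E_2 \leq -\gamma E_2 + C$, and Gr\"onwall gives a uniform-in-time bound on $E_2$, hence on $\norm{u_x}_{L^2}^2 + \norm{v_x}_{L^2}^2$. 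Combined with the $L^2$ control from step one, this completes the $H^1$ bound. The main technical point to watch will be the cubic coercivity step: since neither $\int u^3$ nor $\int uv^2$ has a definite sign and $v$ is not mean zero, one must lean on Gagliardo--Nirenberg together with the $L^2$ bound (which does include the mean of $v$, as $E_1$ controls $\norm{v}_{L^2}^2$ in full) to absorb these indefinite cubics into the quadratic dispersive part of $E_2$.
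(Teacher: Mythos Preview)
Your proposal is correct and follows essentially the same strategy as the paper: use $E_1$ as a Lyapunov functional to obtain the uniform $L^2$ bound, then use $E_2$ together with that $L^2$ control to close the $\dot{H}^1$ estimate. The only differences are technical packaging---the paper extracts coercivity via $\|u_x\|+\|v_x\|\lesssim \sqrt{|E_2|}+C$ and then bounds $\partial_tE_2+2\gamma E_2\leq C_0\sqrt{|E_2|}+C_1$ by a bootstrap/contradiction argument, whereas you use Gagliardo--Nirenberg and Young to get the sharper equivalence $E_2+C\simeq \|u_x\|^2+\|v_x\|^2$ and reduce directly to $\frac{d}{dt}E_2\leq -\gamma E_2+C$; both routes close the same way.
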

\begin{proof}
We start by noting that the constants in the following calculations are denoted by $C$, $C_0$, and $C_1$ whose value may change, their dependence are to be highlighted though. To obtain the $L^2$ bounds for $u$ and $v$, we use $E_1(t):=E_1(u,v)(t)=\norm{u}_{L^2}^2-\frac{2\beta}{3}\norm{v}_{L^2}^2$. Thus
\begin{align*}
\partial_tE_1(t)+2\gamma E_1(t)=2\int uf-\frac{2\beta}{3}vg\,\text{d}x &\leq 2\norm{u}_{L^2}\norm{f}_{L^2}-\frac{4\beta}{3}\norm{v}_{L^2} \norm{g}_{L^2}\\&\leq 2(\sqrt{2}\norm{f}_{L^2}+\sqrt{-\beta}\norm{g}_{L^2})\sqrt{E_1(t)}.  
\end{align*}
Setting $E_1(t)=e^{-2\gamma t}F_1(t)$ and using the above inequality we obtain
\begin{align*}
\partial_t\sqrt{F_1(t)}\leq e^{\gamma t}(\sqrt{2}\norm{f}_{L^2}+\sqrt{-\beta}\norm{g}_{L^2}).    
\end{align*}
Integrating this inequality from $0$
to $t$ and then utilizing the resulting inequality in the norms of $u$ and $v$, we arrive at  
\begin{multline*}
\norm{u(t)}_{L^2}+\sqrt{\frac{-2\beta}{3}}\norm{v(t)}_{L^2}\\ \leq \sqrt{2}e^{-\gamma t}\sqrt{\norm{u_0}_{L^2}^2-\frac{2\beta}{3}\norm{v_0}_{L^2}^2}+\frac{1-e^{-\gamma t}}{\gamma} (2\norm{f}_{L^2}+\sqrt{-2\beta}\norm{g}_{L^2}). 
\end{multline*}
Regarding the bounds for the spatial derivatives of $u$ and $v$, we consider $E_2(t):=E_2(u,v)(t)=(1-a)\big(\norm{u_x}_{L^2}^2-2\int u^3\text{d}x\big)-2\beta\big(\norm{v_x}_{L^2}^2-\int uv^2\text{d}x\big)$. Note that
\begin{multline*}
(1-a)\norm{u_x}_{L^2}^2-2\beta\norm{v_x}_{L^2}^2=E_2(t)+2(1-a)\int u^3\,\text{d}x-2\beta\int uv^2\,\text{d}x \\ \leq E_2(t)+C\norm{u}_{H^1}\big(\norm{u}_{L^2}^2+\norm{v}_{L^2}^2\big)\leq E_2(t)+C+C\norm{u_x}_{L^2}    
\end{multline*}
the constants depend on the bounds on $\norm{u}_{L^2}$, $\norm{v}_{L^2}$ in the final inequality. By this inequality, we have
\begin{multline*}
\sqrt{1-a}\norm{u_x}_{L^2}-\frac{C}{2\sqrt{1-a}}\leq \sqrt{\Big(\sqrt{1-a}\norm{u_x}_{L^2}-\frac{C}{2\sqrt{1-a}}\Big)^2-2\beta\norm{v_x}_{L^2}^2}\\ \leq \sqrt{E_2(t)+C+C^2/4(1-a)}\lesssim \sqrt{|E_2(t)|}+C   
\end{multline*}
and \begin{align*}
\sqrt{-2b}\norm{v_x}_{L^2}\leq \sqrt{\Big(\sqrt{1-a}\norm{u_x}_{L^2}-\frac{C}{2\sqrt{1-a}}\Big)^2-2\beta\norm{v_x}_{L^2}^2}\lesssim \sqrt{|E_2(t)|}+C.  
\end{align*} Thus $\norm{u_x(t)}_{L^2}+\norm{v_x(t)}_{L^2}\lesssim \sqrt{|E_2(t)|}+C$. To end up the argument it suffices to show that $E_2$ is bounded.
Using this bound and the embedding $H^1\hookrightarrow L^{\infty}$ we obtain
\begin{multline*}
\partial_tE_2(t)+2\gamma E_2(t)=2(1-a)\int f_xu_x-3fu^2+\gamma u^3 \,\text{d}x-2\beta\int 2g_xv_x-fv^2-2guv+\gamma uv^2\,\text{d}x \\ \leq C_0(\norm{u_x}_{L^2}+\norm{v_x}_{L^2})+C_1\leq C_0\sqrt{|E_2(t)|}+C_1 
\end{multline*}
where the constants $C_0$, $C_1$ depend on the norms $\norm{f}_{H^1}$, $\norm{g}_{H^1}$, the constants $a$, $\beta$, $\gamma$ and the bounds on $\norm{u}_{L^2}$, $\norm{v}_{L^2}$. Setting $E_2(t)=e^{-2\gamma t}F_2(t)$, we get that
\begin{align*}
\partial_tF_2(t)\leq e^{\gamma t}\big(C_0\sqrt{|F_2(t)|}+C_1e^{\gamma t}\big),    
\end{align*}
from which we have that
\begin{align*}
E_2(t)&\leq e^{-2\delta t}E_2(0)+C_1\frac{1-e^{-2\gamma t}}{2\gamma}+C_0\int_0^te^{-2\gamma (t-t')}\sqrt{|E_2(t')|}\,\text{d}t'\\& \leq |E_2(0)|+C_1+C_0\norm{\sqrt{|E_2|}}_{L^{\infty}([0,t])}  
\end{align*}
for $t>0$. This shows that $E_2$ is bounded from above because if it  were the case that $t$ might be the first time at which $E_2$ assumes its largest value, say $C$, over $[0,t]$ with $E_2(t)=C\gg |E_2(0)|+C_1+C_0=:\widetilde{C}$, then by the above inequality we would have $C\leq \widetilde{C} (1+\sqrt{C})$, but this is impossible for sufficiently large $C\gg 1$. Also the Sobolev embedding and the bounds on $\norm{u}_{L^2}$, $\norm{v}_{L^2}$ suggest that $E_2$ is bounded below. 
\end{proof}
As a consequence of the Lemma \ref{energyhir}, the existence of an absorbing ball $\mathcal{B}_0\subset H^1\times H^1$ follows. As for the verification of the asymptotic compactness of the flow, the second task is to obtain smoothing estimate as done in the non-dissipative case. 
\begin{theorem}\label{smoothingdisshir}
 Consider the solution of \eqref{disshir} with initial data $(u_0,v_0)\in \dot{H}^1\times H^1$. Then for any $\alpha<\min\{\frac{1}{2}, 3-\mu(\rho_a)\}$, we have
 \begin{multline*}
    \norm{u(t)-e^{-(a\partial_x^3+\gamma)t}u_0-\int_0^te^{-(a\partial_x^3+\gamma)(t-r)}\rho_2(v,v)(r)\,\text{d}r}_{H^{1+\alpha}}\\+ \norm{v(t)-e^{-(\partial_x^3+\gamma)t}v_0-\int_0^te^{-(\partial_x^3+\gamma)(t-r)}\rho_3(u,v)(r)\,\text{d}r}_{H^{1+\alpha}}\\ \leq C(\alpha, \gamma, \norm{u_0}_{H^1}, \norm{v_0}_{H^1},\norm{f}_{H^1},\norm{g}_{H^1})
 \end{multline*}
 where $\rho_2$ and $\rho_3$ are as in Proposition \ref{base}.
\end{theorem}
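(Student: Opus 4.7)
The proof proceeds in the same spirit as Theorem \ref{smoothingresulthir}, adapted to account for damping and forcing and aiming at a bound that is uniform in $t$. First I would derive the analog of Proposition \ref{base} for the dissipative system \eqref{disshir}. Replacing the integrating factor $e^{-iak^3t}$ by $e^{(\gamma-iak^3)t}$ (and similarly $e^{-ik^3t}$ by $e^{(\gamma-ik^3)t}$) and running the same differentiation by parts, one obtains
\begin{multline*}
\partial_t\bigl(e^{(\gamma-iak^3)t}[u_k+B_1(u,u)_k+B_2(v,v)_k]\bigr) \\
= e^{(\gamma-iak^3)t}\bigl[R_1(u,v,v)_k+R_2(u,u,u)_k+R_3(u,v,v)_k+\rho_1(u,u)_k+\rho_2(v,v)_k+\mathcal{F}^{(1)}_k\bigr]
\end{multline*}
together with the analogous identity for $v_k$ involving $\rho_3$. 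The extra term $\mathcal{F}^{(1)}_k$ collects the contributions generated when $\partial_t u_{k_i}$ or $\partial_t v_{k_i}$ appears inside the normal form: these are of three types, namely (i) $\gamma$ times one of the bilinear boundary operators $B_j$, (ii) bilinear forms such as $B_j(f,u)$, $B_j(g,v)$ obtained by substituting a forcing into one slot of a $B_j$, and (iii) trilinear expressions in which one factor of $u$ or $v$ is replaced by $f$ or $g$. Each of these obeys the same bi- or tri-linear smoothing bounds as in Section 4 after substituting $f,g$ in the argument.

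Second, I would integrate the above identity against $e^{(\gamma-iak^3)(t-r)}$ to obtain a Duhamel formula, and then transport the $B_j$ terms and $\mathcal{F}^{(j)}$ terms to the right-hand side of the desired estimate. The resonant terms $\rho_2$ and $\rho_3$ are the obstructions to smoothing (by Proposition \ref{hirprop01} they only give the trivial gain $s_1-s\leq s-1$, hence no gain at $s=1$), and they are precisely what is subtracted off in the statement. The remaining boundary terms $B_1,B_2,B_3$ are handled by Propositions \ref{hirprop01}--\ref{hirprop03}, while the trilinear Duhamel pieces are controlled by Propositions \ref{hirprop1}--\ref{hirprop5}, all evaluated at $s=1$, $s_1=1+\alpha$. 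The binding constraints from the strictest of these propositions collapse to $\alpha<\min\{1,\tfrac{1}{2},3-\mu(\rho_a),3-\mu(\rho_a)\}=\min\{\tfrac{1}{2},3-\mu(\rho_a)\}$, which is exactly the hypothesis.

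Third, I would feed in Lemma \ref{energyhir} to bound $\|u(t)\|_{H^1}+\|v(t)\|_{H^1}$ uniformly in $t$, and the assumed $H^1$ bounds on $f$ and $g$ to bound the forcing-type contributions. Since the trilinear $X^{s,b}$ estimates are only local in time (Lemma \ref{standardlemmahir}), I would partition $[0,t]$ into subintervals of length $\delta$ coming from the local well-posedness, apply the smoothing estimate on each, and sum using the damping factor $e^{-\gamma(t-r)}$. This converts what would otherwise be an $O(t/\delta)$ accumulation into a convergent geometric series with sum $\lesssim (1-e^{-\gamma\delta})^{-1}$, yielding a bound independent of $t$.

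The main obstacle is precisely this passage from the polynomial-in-$t$ conclusion of Theorem \ref{smoothingresulthir} to a time-uniform one. In the conservative setting we could afford the factor $J=t/\delta$ coming from iterating the local theory, but here it must be absorbed by the exponential weight. The delicate point is to organize the Duhamel formula so that every piece carrying potential growth (the trilinear integrals, the forcing-bilinear terms, and the $\gamma$-corrections) either sits inside an integral weighted by $e^{-\gamma(t-r)}$ or is evaluated at time $t$ with a uniformly bounded $H^1$ norm; once this accounting is done, the smoothing propositions plug in as black boxes and the bound depends only on $\alpha,\gamma,\|u_0\|_{H^1},\|v_0\|_{H^1},\|f\|_{H^1},\|g\|_{H^1}$ as claimed.
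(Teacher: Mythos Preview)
Your proposal is correct and follows essentially the same route as the paper. One minor correction: the extra contributions $\mathcal{F}^{(1)}$ produced by the damped normal form consist only of the bare forcing $f_k$, the bilinear corrections of your types (i) and (ii) (namely $\gamma B_j(\cdot,\cdot)$ and $B_j(f,\cdot)$, $B_j(\cdot,g)$), and no trilinear-with-forcing terms of type (iii) actually arise---but this does not affect the argument, and the paper likewise defers the local-to-global step via dissipation to the geometric-series mechanism you describe.
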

\begin{proof}
We write the system \eqref{disshir} by the Fourier transform as follows
\begin{align}\label{fourierdisshir}
\begin{cases}
\partial_t u_k-(ia k^3-\gamma)u_k+3ia k  \sum\limits_{k_1+k_2=k}u_{k_1}u_{k_2}  +i\beta k\sum\limits_{k_1+k_2=k}v_{k_1}v_{k_2}=f_k \\ \partial_t v_k-(ik^3-\gamma)v_k+3i \sum\limits_{k_1+k_2=k}k_2u_{k_1}v_{k_2}=g_k.
\end{cases}
\end{align}
Using the change of variables $y_k=e^{-ia k^3t+\gamma t}u_k$, $z_k=e^{-ik^3t+\gamma t}v_k$, and $d_k=e^{-ia k^3t+\gamma t}f_k$, $h_k=e^{-ik^3t+\gamma t}g_k$, the above system transforms to
\begin{align*}
\begin{cases}
\partial_ty_k=-3iak \sum\limits_{k_1+k_2=k}e^{-ia t(k^3-k_1^3-k_2^3)}y_{k_1}y_{k_2}-i\beta k \sum\limits_{k_1+k_2=k}e^{-it(a k^3-k_1^3-k_2^3)}z_{k_1}z_{k_2}+d_k \\ \partial_tz_k=-3i\sum\limits_{k_1+k_2=k}k_2e^{-it(k^3-a k_1^3-k_2^3)}y_{k_1}z_{k_2}+h_k.
\end{cases}
\end{align*}
After differentiation by parts as in Proposition \ref{base}, the system \eqref{fourierdisshir} can be written in the form
\begin{multline*}
\begin{cases} 
\partial_t\Big[e^{-ia k^3t+\gamma t}u_k\Big]+e^{-\gamma t}\partial_t\Big[e^{-ia k^3t+2\gamma t}(B_1(u,u)_k+B_2(v,v)_k)\Big]=\\\hspace{3cm}e^{-ia k^3t+\gamma t}\big[R_1(u,v,v)_k+R_2(u,u,u)_k+R_3(u,v,v)_k+2B_1(u,f)_k\\\hspace{8cm}+2B_2(g,v)_k+\rho_1(u,u)_k+\rho_2(v,v)_k+f_k\big] \\\partial_t\Big[e^{-ik^3t+\gamma t}v_k\Big]+e^{-\gamma t}\partial_t\Big[e^{-ik^3t+2\gamma t}B_3(u,v)_k\Big]=\\\hspace{5cm}e^{-ik^3t+\gamma t}\big[R_4(u,u,v)_k+\frac{\beta}{3a}R_4(v,v,v)_k+R_5(u,u,v)_k\\\hspace{8cm}+B_3(f,v)_k+B_3(u,g)_k+\rho_3(u,v)_k+g_k\big], 
\end{cases} 
\end{multline*} 
where $B_j$, $R_j$, and $\rho_j$ are as in Proposition \ref{base}. Integrating these equations from $0$ to $t$ leads to the equations
\begin{multline*} 
u_k(t)-e^{ia k^3t-\gamma t}u_k(0)=-B_1(u,u)_k-B_2(v,v)_k+e^{ia k^3t-\gamma t}\big[B_1(u_0,u_0)_k+B_2(v_0,v_0)_k\big]\\+\int_0^te^{(ia k^3-\gamma) (t-s)}\big[-\gamma B_1(u,u)_k-\gamma B_2(v,v)_k+\rho_1(u,u)_k+\rho_2(v,v)_k+f_k+2B_1(u,f)_k\\\hspace{5cm}+2B_2(g,v)_k+R_1(u,v,v)_k+R_2(u,u,u)_k+R_3(u,v,v)_k\big]\text{d}s\\  v_k(t)-e^{ik^3t-\gamma t}v_k(0)=-B_3(u,v)_k+e^{ik^3t-\gamma t}B_3(u_0,v_0)_k+\int_0^te^{(ik^3-\gamma)(t-s)}\big[-\gamma B_3(u,v)_k\\+\rho_3(u,v)_k+g_k+R_4(u,u,v)_k +\frac{\beta}{3a}R_4(v,v,v)_k+R_5(u,u,v)_k+B_3(f,v)_k+B_3(u,g)\big]ds.
\end{multline*}
Note that
\begin{align*}
\norm{\int_0^te^{(-a\partial_x^3-\gamma)(t-s)}f(x)\text{d}s}_{H^{1+\alpha}}=\norm{\frac{\langle k\rangle^{1+\alpha}f_k}{iak^3-\gamma}(1-e^{(iak^3-\gamma)t})} _{\ell^2_k}\lesssim \norm{f}_{H^{\alpha-2}},   
\end{align*}
analogous estimate holds for $e^{(-\partial_x^3-\gamma)(t-s)}g$ as well. These bounds, the estimates utilized in obtaining main smoothing result, and the growth bound of Lemma \ref{energyhir} yield, for $t<\delta$, that
 \begin{multline*}
    \norm{u(t)-e^{-(a\partial_x^3+\gamma)t}u_0-\int_0^te^{-(a\partial_x^3+\gamma)(t-r)}\rho_2(v,v)(r)\,\text{d}r}_{H^{1+\alpha}}\\+ \norm{v(t)-e^{-(\partial_x^3+\gamma)t}v_0-\int_0^te^{-(\partial_x^3+\gamma)(t-r)}\rho_3(u,v)(r)\,\text{d}r}_{H^{1+\alpha}}\\ \lesssim \norm{f}_{H^{\alpha-2}}+\norm{g}_{H^{\alpha-2}}+\big(\norm{f}_{H^1}+\norm{g}_{H^1}+\norm{u_0}_{H^1}+\norm{v_0}_{H^1}\big)^2+\big(\norm{u}_{X_{a,\delta}^{1,1/2}}+\norm{v}_{X_{1,\delta}^{1,1/2}}\big)^3\\ \leq C\big(\alpha, \gamma, \norm{f}_{H^1}, \norm{g}_{H^1}, \norm{u_0}_{H^1}, \norm{v_0}_{H^1} \big)
 \end{multline*}
where we use the local theory bounds for $X_{a,\delta}^{1,1/2}$, $X_{1,\delta}^{1,1/2}$ norms for the local existence time $\delta$ in the final inequality. By virtue of dissipation, this bound also holds for arbitrarily large times making use of the local bound above, for the full discussion, see Section $6$ in \cite{erdoganzak}.    
\end{proof}

\begin{proof}[Proof of Theorem \ref{attrhir}]
For the existence of a global attractor, we check the asymptotic compactness of the flow. It suffices to show that for any sequence $(u_{0,r},v_{0,r})$ in an absorbing set $\mathcal{B}_0$ and for any sequence of times $t_r\rightarrow \infty$, the sequence $U_{t_r}(u_{0,r},v_{0,r})$ possesses a convergent subsequence in $\dot{H}^1\times H^1$. Next we use Theorem \ref{smoothingdisshir}, for almost every $a\in(\frac{1}{4},1)$ such that $\alpha<\frac{1}{2}$ and $\rho_2=\rho_3=0$, to write
\begin{align*}
  U_{t_r}(u_{0,r},v_{0,r})=(e^{-(a\partial_x^3+\gamma)t_r}u_{0,r},e^{-(\partial_x^3+\gamma)t_r}v_{0,r})+N_{t_r}(u_{0,r},v_{0,r})   
\end{align*}
where the nonlinear part $N_{t_r}(u_{0,r},v_{0,r})$ is contained within a ball in $H^{1+\alpha}\times H^{1+\alpha}$. Therefore by Rellich's theorem the sequence $\{N_{t_r}(u_{0,r},v_{0,r}):r\in\mathbb{N}\}$ has a convergent subsequence in $H^1\times H^1$. This implies the existence of a convergent subsequence of the sequence $\{U_{t_r}(u_{0,r},v_{0,r}):r\in\mathbb{N}\}$, since  
\begin{align*}
\norm{(e^{-(a\partial_x^3+\gamma)t_r}u_{0,r},e^{-(\partial_x^3+\gamma)t_r}v_{0,r})}_{H^1\times H^1}\lesssim e^{-\gamma t_r}(\norm{u_{0,r}}_{H^1},\norm{v_{0,r}}_{H^1}) \lesssim e^{-\gamma t_r}\rightarrow 0  
\end{align*}
as $t_r\rightarrow \infty$ uniformly. Therefore $U_t$ is asymptotically compact. To prove the compactness of the attractor $\mathcal{A}$ in the space $H^{1+\alpha}\times H^{1+\alpha}$ for any $\alpha\in (0,\frac{1}{2})$, we need to show, by using Rellich's theorem, that the attractor is bounded in $H^{1+\alpha+\epsilon}\times H^{1+\alpha+\epsilon}$ for some $\epsilon>0$ satisfying $\alpha+\epsilon<\frac{1}{2}$. In this regard, it suffices to find some closed ball $\mathcal{B}_{\alpha+\epsilon}\subset H^{1+\alpha+\epsilon}\times H^{1+\alpha+\epsilon}$ such that $\mathcal{A}\subset \mathcal{B}_{\alpha+\epsilon}$ where
\begin{align*}
    \mathcal{A}=\bigcap_{\tau\geq 0}\overline{\bigcup_{t\geq \tau}U_t\mathcal{B}_0}=:\bigcap_{\tau\geq 0}V_{\tau}.
\end{align*}
As above, using Theorem \ref{smoothingdisshir}, we can express each element of $V_{\tau}$ as a sum of linear evolution which decays to zero exponentially and the nonlinear evolution contained by some ball $\mathcal{B}_{\alpha+\epsilon}$ in $H^{1+\alpha+\epsilon}\times H^{1+\alpha+\epsilon}$. This implies that the set $V_{\tau}$ is contained in a $\delta_{\tau}$ neighbourhood $N_{\tau}$ of $\mathcal{B}_{\alpha+\epsilon}$ in $H^{1+\alpha+\epsilon}\times H^{1+\alpha+\epsilon}$. Here $\delta_{\tau}\rightarrow 0$ as $\tau \rightarrow \infty$ due to the exponential decay of linear evolutions. Therefore,
\begin{align*}
    \mathcal{A}=\bigcap_{\tau\geq 0}V_{\tau}\subset \bigcap_{\tau\geq 0}N_{\tau}=\mathcal{B}_{\alpha+\epsilon}.
\end{align*}

\end{proof}
\section{APPENDIX} \label{sec:appendix}
The following lemma is used repeatedly in the text. For a proof see for instance \cite{erdoganzak}.
\begin{lemma} \label{cal.lem}
\begin{enumerate}
    \item If $\beta\geq \gamma\geq 0$ and $\gamma+\beta>1$,
    $$\sum_n\frac{1}{\langle n-k_1\rangle^{\beta}\langle n-k_2\rangle^{\gamma}}\lesssim \langle k_1-k_2 \rangle^{-\gamma}\varphi_{\beta}(k_1-k_2)$$
where \begin{align*}
    \varphi_{\beta}(k)=\begin{cases}
    1, \hspace{2.4cm} \text{if}\,\, \beta>1\\ \log(1+\langle k\rangle), \hspace{0.5cm}\text{if}\,\, \beta=1\\ \langle k\rangle^{1-\beta}, \hspace{1.5cm}\text{if}\,\, \beta<1.
    \end{cases}
\end{align*}
\item If $\beta>\frac{1}{2}$ and $\gamma>\frac{1}{3}$, then we have
\begin{align*}
     \sum_n\frac{1}{\langle n^2+an+b \rangle^{\beta}}\lesssim 1,\,\,\text{and}\hspace{0.5cm} \sum_n\frac{1}{\langle n^3+an^2+bn+c \rangle^{\gamma}}\lesssim 1 
\end{align*}
    where the implicit constants are independent of $a,b$ and $c$.
\end{enumerate}

\end{lemma}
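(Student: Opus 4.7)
For part (1), I would split $\mathbb{Z}$ into three regions based on proximity to $k_1$ and $k_2$. With $k := k_1 - k_2$, let $A = \{n : |n - k_1| \leq \langle k \rangle/2\}$, $B = \{n : |n - k_2| \leq \langle k \rangle/2\}$, and $C = \mathbb{Z} \setminus (A \cup B)$. On $A$, $\langle n - k_2\rangle \approx \langle k\rangle$, reducing the contribution to $\langle k\rangle^{-\gamma}\sum_{|m| \leq \langle k\rangle/2}\langle m\rangle^{-\beta}$, and integral comparison for this partial sum produces exactly $\varphi_\beta(k)$ (bounded for $\beta > 1$, logarithmic at $\beta = 1$, $\langle k\rangle^{1-\beta}$ for $\beta < 1$). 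On $B$ the roles of $\beta$ and $\gamma$ swap, yielding $\langle k\rangle^{-\beta}\varphi_\gamma(k)$, which is absorbed into $\langle k\rangle^{-\gamma}\varphi_\beta(k)$ under $\beta \geq \gamma$ by a three-way subcase check on $\gamma$. On $C$ both weights exceed $\langle k\rangle/2$; splitting $C$ by whether $\langle n - k_1\rangle$ or $\langle n - k_2\rangle$ is larger and dominating the smaller weight by the larger one reduces the contribution to $\sum_{|m| \geq \langle k\rangle/2}\langle m\rangle^{-\beta - \gamma} \lesssim \langle k\rangle^{1 - \beta - \gamma}$, which is dominated by $\langle k\rangle^{-\gamma}\varphi_\beta(k)$ since $\beta + \gamma > 1$.

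For the quadratic estimate in part (2), I would complete the square: $n^2 + an + b = (n + a/2)^2 + (b - a^2/4)$. With $n_\ast \in \mathbb{Z}$ nearest to $-a/2$ and $m = n - n_\ast$, $\theta = n_\ast + a/2 \in [-1/2, 1/2]$, the sum becomes $\sum_m \langle (m+\theta)^2 + c\rangle^{-\beta}$ with $c$ depending on $a, b$. The $|m| \leq 1$ terms contribute $O(1)$, and for $|m| \geq 2$ the bound $(m+\theta)^2 \geq m^2/4$ reduces the tail to $\sum |m|^{-2\beta}$, convergent by $\beta > 1/2$ uniformly in $a, b$.

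For the cubic $P(n) = n^3 + an^2 + bn + c$, I would divide $\mathbb{Z}$ based on the position of $n$ relative to the (at most three) real roots of $P$. Far from all roots, $|P(n)| \gtrsim |n|^3$ and the tail $\sum |n|^{-3\gamma}$ converges by $\gamma > 1/3$. Near each root $n_0$, the linearization $P(n_0 + k) \approx P'(n_0) k$ shows that the integer values $\{P(n_0 + k)\}_k$ are spaced at scale $|P'(n_0)|$, and a geometric-type bound on $\sum_k \langle |P'(n_0)| k\rangle^{-\gamma}$ yields a contribution uniformly controlled in $a, b, c$ thanks to the growth $|P'(n)| \gtrsim \mathrm{dist}(n, \text{critical points of } P)^2$. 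The hard part will be ensuring uniformity of the constants when $P'(n_0)$ degenerates (i.e., near a double root of $P$): there the second-order Taylor term $P''(n_0) k^2/2$ provides the required separation at scale $|P''(n_0)| k$, and the resulting sum $\sum_k \langle |P''(n_0)| k^2\rangle^{-\gamma}$ still converges uniformly because $\gamma > 1/3$.
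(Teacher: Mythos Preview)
The paper does not give its own proof of this lemma; it merely cites \cite{erdoganzak}. Your argument for part~(1) is correct and is the standard one.

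For part~(2), however, there is a genuine gap. In the quadratic case you complete the square to $\langle(m+\theta)^2+c\rangle^{-\beta}$ and then assert that for $|m|\geq 2$ the tail is dominated by $\sum|m|^{-2\beta}$ via $(m+\theta)^2\geq m^2/4$. This silently drops the constant $c=b-a^2/4$: when $c$ is large and negative (equivalently, the quadratic has real roots far from the origin), the expression $(m+\theta)^2+c$ vanishes near $m\approx\pm\sqrt{-c}$, so $\langle(m+\theta)^2+c\rangle\approx 1$ there rather than $\approx m^2$. For instance, take $P(n)=n^2-N^2$ with $N$ a large integer; your bound fails term by term at $n=\pm N$, yet the estimate must hold uniformly in $N$. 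The cubic sketch has the same defect (``far from all roots, $|P(n)|\gtrsim|n|^3$'' is not uniform in the coefficients), and in addition the claimed ``geometric-type bound on $\sum_k\langle|P'(n_0)|k\rangle^{-\gamma}$'' is false for $\gamma\in(\tfrac13,1]$, since that series diverges; the patch via $P''$ is not worked out enough to give uniformity.

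A clean route that avoids all of this is level-set counting. For $P$ monic of degree $d$ with (possibly complex) roots $r_1,\dots,r_d$, the bound $|P(n)|\leq M$ forces $\min_i|n-r_i|\leq M^{1/d}$, whence
\[
\#\{n\in\mathbb{Z}:|P(n)|\leq M\}\leq d\,(2M^{1/d}+1)\lesssim M^{1/d}
\]
uniformly in the coefficients. Dyadic decomposition in the size of $P(n)$ then gives $\sum_n\langle P(n)\rangle^{-\beta}\lesssim\sum_{j\geq 0}2^{-j\beta}\cdot 2^{(j+1)/d}$, which converges precisely when $\beta>1/d$. This handles $d=2$ and $d=3$ simultaneously.
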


\end{document}